\definecolor{liens}{rgb}{1,0,0}
\newtheorem*{propointro}{Proposition}
\newtheorem{theo}{Theorem}[section]
\newtheorem*{theointro}{Theorem}
\newtheorem{lem}[theo]{Lemma}
\newtheorem{propo}[theo]{Proposition}
\newtheorem{coro}[theo]{Corollary}
\theoremstyle{definition}
\newtheorem{defi}[theo]{Definition}
\theoremstyle{remark}
\newtheorem{rem}[theo]{Remark}
\newtheorem{ex}[theo]{Example}
\def\pd{(\sq,\delta)}
\def\de{\delta}
\def\Kqpform{\mathbb{C}((z^*))}
\def\R{\mathbb{R}}
\def\Z{\mathbb{Z}}
\def\C{\mathbb{C}}
\def\N{\mathbb{N}}
\def\Q{\mathbb{Q}}
\def\cL{\mathcal{L}}
\def\n{\eta}
\def\a{\alpha}
\def\d{\delta}
\def\b{\beta}
\def\n'{\nu}
\def\d{\delta}
\def\l{\lambda}
\def\k{\kappa}
\def\qpr{\mathbf{q}}
\def\L{\Lambda}
\def\Kqp{{ \mathbb{C}(z^*)}}
\def\sq {{\sigma_{q}}}
\def\Aut{\mathrm{Aut}}
\def\Gal {\mathrm{Gal}}
\def\GL {\mathrm{GL}}
\def\SL {\mathrm{SL}}
\def\cQ{\mathcal{Q}}
\def\Galdelta {\mathrm{Gal}^\delta}
\newcommand{\circKol}{\circ_{\delta}}
\newcommand{\derKol}{der_{\delta}}
\def\tq{\qpr}
\def\K{\mathbf{K}}
\def\Kq{\C(z)}
\def\Kqbar{\overline{\C(z)}}
\def\Kqform{\mathbb{C}((z^*))}
\def\k{\mathbf{k}}
\def\Const{\mathbf{C}}
\def\L{\mathbf{L}}
\def\Lqbar{\mathbf{L}}
 \def\sp{{\sigma_{\qpr}}}
\def\ssp{(\sigma_q,\sp)}
\def\cM{\mathcal{M}}
\def\V{\bold{V}}
\def\Alg{\mathrm{Alg}}
\def\Sets{\mathrm{Sets}}
\def\spGal{ {\mathrm{Gal}^{\sp}}}
\def\sptrdeg{\sp$-$\mathrm{trdeg}}
\def\Groups{\mathrm{Groups}}
\def\spdim{\sp$-$\mathrm{dim}}
\def\Hom{\mathrm{Hom}}
\def\M{\mathbf{M}}
\def\SO{\mathrm{SO}}
\def\Sp{\mathrm{Sp}}
\newtheorem{hypotheses}[theo]{Assumptions}
\newcommand{\crochets}[1]{\left[#1\right]}
\newcommand{\divi}{\operatorname{div}}
\newcommand{\qdivi}{\operatorname{div}_{q}}
\begin{document}

\sloppy

\title{Functional relations for solutions of $q$-difference equations}
\author{Thomas Dreyfus}
\address{Institut de Recherche Math\'ematique Avanc\'ee, U.M.R. 7501 Universit\'e de Strasbourg et C.N.R.S. 7, rue Ren\'e Descartes 67084 Strasbourg, FRANCE}
\email{dreyfus@math.unistra.fr}
\author{Charlotte Hardouin}
\address{Universit\'e Paul Sabatier - Institut de Math\'ematiques de Toulouse, 118 route de Narbonne, 31062 Toulouse.}
\email{hardouin@math.univ-toulouse.fr}
\author{Julien Roques}
\address{Univ Lyon, Universit\'e Claude Bernard Lyon 1, CNRS UMR 5208, Institut Camille Jordan, 43 blvd. du 11 novembre 1918, F-69622 Villeurbanne cedex, France}
\email{roques@math.univ-lyon1.fr}

\keywords{$q$-difference equations, Difference Galois theory, Parametrized difference Galois theory, $q$-hypergeometric series.}

\thanks{The second author would like to thank the ANR-11-LABX-0040-CIMI within
the program ANR-11-IDEX-0002-0 for its partial support. The second authors work is also supported by ANR Iso-Galois. This project has received funding from the European Research Council (ERC) under the European Union's Horizon 2020 research and innovation programme under the Grant Agreement No 648132.}
\subjclass[2010]{39A06,12H10}
\date{\today}

\begin{abstract} 
In this paper, we study the algebraic relations satisfied by the solutions of $q$-difference equations and their transforms
with respect to an auxiliary operator. Our main tools are the parametrized Galois theories developed in \cite{HS}
and \cite{OvWib}. The first part of this paper is concerned with the case where the auxiliary operator  is a derivation, whereas
the second part deals with a $\qpr$-difference operator.  In both cases, we give criteria  to guarantee the  algebraic independence of  a series,  solution of a $q$-difference equation, with either its successive derivatives or its $\qpr$-transforms.  We apply our results to  $q$-hypergeometric series.
\end{abstract} 
\maketitle
\setcounter{tocdepth}{1}
\tableofcontents

\section*{Introduction}

The study of the differential transcendence of special functions is an old and difficult problem. Only very recently, systematic methods to tackle this kind of question were discovered. Indeed, after the seminal work of Cassidy and Singer in \cite{CS}, several authors developed Galoisian approaches in order to study the differential or difference relations between solutions of linear differential or difference equations; see {\it e.g.} Hardouin and Singer \cite{HS}, Di Vizio, Hardouin and Wibmer \cite{DVHaWib1,DVHaWib2} and  Ovchinnikov and Wibmer \cite{OvWib}. For instance, this led to a short and comprehensive proof of H\"older's theorem asserting the differential transcendence of Euler's Gamma function; see \cite{HS}. Also, this enabled the authors of the present paper to study the differential transcendence of generating series issued from the theory of automatic sequences, such as the Baum-Sweet or the Rudin-Shapiro generating series, which turn out to satisfy linear Mahler equations; see \cite{DHR}. In the present paper, we take a close look at the differential algebraic relations satisfied by solutions of linear $q$-difference equations.Very little was known about the differential or difference algebraic relations between these solutions. The first results in this direction, due to B\'ezivin (\cite{BeBo}) and Ramis (\cite{R92}), assert that a 
non rational solution of a linear $q$-difference equation does not satisfy a linear dependence relation  with its successive transforms with respect to a derivation or  a $\qpr$-difference operator provided that $\qpr$ is multiplicatively independent of $q$, i.e., $\log(q/ \qpr)\notin \Q$. Later, the parametrized   Galois theories developed by Hardouin and Singer in  \cite{HS} and Ovchinnikov and Wibmer in \cite{OvWib} allowed their authors to give  complete criteria for the differential or difference transcendence  for the solutions of  $q$-difference equations of order one or of systems of such equations.  For irreducible $q$-difference equations, the results of \cite{HS} allowed to characterize the dependencies of the solutions   via the existence of a linear compatible equation in the auxiliary operator.   Our  paper is mainly  concerned with  $q$-difference equations of order  greater than two and combines the results of B\'{e}zivin and Ramis with the parametrized Galois theories mentioned above. This paper is divided in two parts. \vskip 5 pt
$$\ast\ast\ast$$
In the first part, we study the algebraic  relations between the successive derivatives of the solutions of linear $q$-difference equations. These relations are encoded by the parametrized difference Galois groups introduced by Hardouin and Singer in \cite{HS}. The basic (and, at first sight, quite optimistic) question is: if we know the algebraic relations between the solutions, what can be said about the differential algebraic relations? In Galoisian terms, an equivalent question is:  if we know what the non parametrized  difference Galois group is, what can be said about the parameterized difference Galois group?
 Our answer reads as follows. Consider a linear $q$-difference equation
\begin{equation}\label{equa q diff intro}\tag{1}
a_{n}(z)y(q^{n}z)+a_{n-1}(z)y(q^{n-1}z)+\cdots+a_{0}(z)y(z)=0
\end{equation}
where $a_{0}(z),\ldots,a_{n-1}(z),a_{n}(z) \in \C(z)$, $a_{0}(z)a_{n}(z)\neq 0$, and where $q$ is a non zero complex number with $|q|\neq 1$. Let $G$ be the difference Galois group of this equation. This is an algebraic subgroup of $\GL_{n}(\C)$ which reflects the algebraic relations  between the solutions of the equation. Let $G^{\delta}$ be its parametrized difference Galois group. This is a differential algebraic subgroup of $\GL_{n}(\widetilde{\C})$, where $\widetilde{\C}$ is a differential closure of $\C$ , {\it i.e.}, it is a group of matrices whose entries are the zeros of  differential algebraic polynomials with coefficients in $\widetilde{\C}$. As mentioned above, this parametrized difference Galois group reflects the differential algebraic relations between the solutions of the equation. The main result of the first part of the present paper, see Theorem~\ref{theo3}, can be stated as follows. The technical assumption on the Galois group in the following theorem could be roughly rephrazed as the assumption that the Galois group is ``sufficiently big'', which means that there are few algebraic relations among the solutions of the $q$-difference equation.

\begin{theointro}
Assume that the derived subgroup $G^{\circ,der}$ of the neutral component $G^{\circ}$ of $G$ is an irreducible almost simple algebraic subgroup of $\SL_{n}(\C)$. Then, $G^{\delta}$ is a subgroup of $G(\widetilde{\C})$ containing $G^{\circ, der}(\widetilde{\C})$.
\end{theointro}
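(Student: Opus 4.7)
The strategy combines three ingredients: the general parametrized Picard--Vessiot formalism of Hardouin--Singer, Cassidy's classification of Zariski dense Kolchin-closed subgroups of almost simple algebraic groups, and a B\'ezivin--Ramis type obstruction forbidding non-rational solutions of $q$-difference equations from fulfilling non-trivial linear differential relations. In the Hardouin--Singer formalism, $G^{\delta}$ is automatically a Kolchin-closed subgroup of $G(\widetilde{\C})$ whose Zariski closure in $\GL_{n}(\widetilde{\C})$ is $G(\widetilde{\C})$; hence the content of the theorem is the inclusion $G^{\circ,der}(\widetilde{\C}) \subseteq G^{\delta}$.

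I would first reduce to an almost simple setting. Let $H$ be the Kolchin-connected component of $G^{\delta}$. Because the Kolchin topology refines the Zariski topology and $G^{\circ}$ is the Zariski-connected component of $G$, the Zariski closure of $H$ coincides with $G^{\circ}(\widetilde{\C})$. The commutator subgroup $[H,H]$ then has Zariski closure $G^{\circ,der}(\widetilde{\C})$, so its Kolchin closure $N$ is a Zariski dense Kolchin-closed subgroup of the almost simple group $S(\widetilde{\C})$, where $S := G^{\circ,der}$. Since $G^{\delta}$ is Kolchin-closed, $N \subseteq G^{\delta}$, and it therefore suffices to prove $N = S(\widetilde{\C})$.

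Now I would invoke Cassidy's theorem on Zariski dense Kolchin-closed subgroups of almost simple algebraic groups defined over $\C$: either $N = S(\widetilde{\C})$, which is the desired conclusion, or $N$ is contained, up to conjugation inside $S(\widetilde{\C})$, in the subgroup of $S$-points with coefficients in the field of $\delta$-constants of $\widetilde{\C}$. In the second case, unwinding the construction of the parametrized Picard--Vessiot ring produces a matrix $B \in \mathfrak{s}(\C(z))$ and a fundamental matrix of solutions $U$ of (\ref{equa q diff intro}) satisfying $\delta(U) = B U$, i.e.\ a non-trivial system of linear differential equations fulfilled by the entries of $U$; the irreducibility assumption on $G^{\circ,der} \subseteq \SL_{n}(\C)$ ensures that such a relation is non-degenerate on each non-zero solution.

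The principal obstacle, and where I expect the subtlest work, is this last step: cleanly translating the Galois-theoretic conclusion of Cassidy's dichotomy into an explicit linear differential relation with coefficients in $\C(z)$, and then invoking the B\'ezivin--Ramis type result (established or recalled earlier in the paper) to derive a contradiction under the hypothesis $|q| \neq 1$. Once this translation is carried out, the constants case is excluded, $N = S(\widetilde{\C})$ follows, and the theorem is proven.
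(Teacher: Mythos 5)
Your proposal correctly identifies the three ingredients the paper actually uses (the Hardouin--Singer formalism with Zariski density, Cassidy's dichotomy for Zariski-dense Kolchin-closed subgroups of almost simple groups applied to the Kolchin closure of the derived subgroup, and a B\'ezivin--Ramis obstruction), and you rightly flag the translation step as the delicate one. But that step, as you state it, has a genuine gap. Cassidy's theorem controls only the derived Kolchin subgroup $N$: in the degenerate branch, $N$ is conjugate into $S(\Const)$, and what you can then say about the \emph{full} group $\Galdelta$ is only that it normalizes $N$, hence (by a Schur-type computation, Lemma \ref{lem normalisateur}) lies in $\widetilde{\C}^{\times}N_{\SL_n(\Const)}(S(\Const))$. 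This yields not an integrable system $\delta(U)=BU$ with $B$ over $\C(z)$, but only a \emph{projective} integrability condition
$\sq(B)A=AB+\delta(A)-\tfrac{1}{n}\delta(\det A)\det(A)^{-1}A$,
and the associated $\delta$-equation for a fundamental matrix is $\delta(V)=(B+\tfrac{1}{n}\delta(\mathbf{d})\mathbf{d}^{-1})V$ with $\mathbf{d}=\det V$ a priori transcendental over the base. Most of the paper's proof of Theorem \ref{theo3} is devoted to removing this projective twist: one normalizes the determinant ($A''=cz^{r}(\det A)^{-1/n}A$), produces a formal Puiseux solution $u$ of the twisted system (Lemma \ref{lem:convergentformal solution}), proves that $\delta(\mathbf{d})/\mathbf{d}$ is transcendental over $\Lqbar\langle u_{1},\dots,u_{n}\rangle_{\d}$ via a minimal-degree argument using $\sq(\delta(\mathbf{d})/\mathbf{d})=\delta(\mathbf{d})/\mathbf{d}+nr$, descends the resulting linear $\delta$-equations to $\Kq$, and passes to the tensor power $u^{\otimes pn}$ to land back on a genuine $q$-difference equation over $\Kq$ before invoking Ramis's theorem (via Lemma \ref{sol commune}). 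None of this is automatic, and your phrase ``unwinding the construction ... produces a matrix $B$ ... satisfying $\delta(U)=BU$'' elides exactly this work.

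A second, smaller imprecision: your concluding appeal to irreducibility (``such a relation is non-degenerate on each non-zero solution'') is not how the contradiction is obtained. The B\'ezivin--Ramis input forces the formal solution $u$ to have entries in $\Kqbar$, i.e.\ to be algebraic; the contradiction is then that the corresponding column of the fundamental matrix is fixed by the difference Galois group, which is impossible because that group contains the irreducible group $G^{\circ,der}(\widetilde{\C})$. You also need the existence of a nonzero formal solution in $\Kqform$ for a suitable twist of the system, which is not hypothesized in the theorem and must be supplied (this is the role of Lemma \ref{lem:convergentformal solution}). So the architecture of your argument matches the paper's, but the passage from the constants case of Cassidy's dichotomy to an applicable B\'ezivin--Ramis statement is where the real proof lives, and it is missing.
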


Since $G^{\delta}$ is sufficiently big, we have for instance the following consequences on the solutions, see {Corollary \ref{coro1}}. 

\begin{propointro}
Let $h(z)$ be a non zero Laurent  series solution of \eqref{equa q diff intro}. Let $G$ be the difference Galois group of \eqref{equa q diff intro} and consider the derived subgroup $G^{\circ,der}$ of the neutral component $G^{\circ}$ of $G$.
\begin{itemize}
\item  Assume that $n\geq 2$ and $G^{\circ, der}=\SL_{n}(\C)$.
Then, $h(z),\dots,h(q^{n-1}z)$ are differentially algebraically independent over $\C(z)$.
\item Assume that $n$ is even and $G^{\circ, der}=\mathrm{Sp}_{n}(\C)$.  
Then, the series $h(z),\dots,h(q^{n-1}z)$ are differentially algebraically independent over $\C(z)$.
\item Assume that $n\geq 3$ and $G^{\circ, der}=\mathrm{SO}_{n}(\C)$.
Then, $h(z),\dots,h(q^{n-2}z)$ are differentially algebraically independent over $\C(z)$. 
\end{itemize}
\end{propointro}

An important family of $q$-difference equations is given by the generalized $q$-hypergeometric equations. Assume that $0< |q|<1$. Let us fix $n\geq s$, two integers, let $\underline{a}=(a_{1},\dots,a_{n})\in (q^{\R})^{n}$, $\underline{b}=(b_{1},\dots,b_{s})\in (q^{\R}\setminus q^{-\N})^{s}$, $\l\in \C^{\times}$, and define $\sq (f(z))=f(qz)$. Let us consider the generalized $q$-hypergeometric operator:
$$
z\l \displaystyle \prod_{i=1}^{n} (a_{i}\sq -1)-\displaystyle \prod_{j=1}^{s} \left(\frac{b_{j}}{q}\sq -1\right).
$$
When $b_{1}=q$, this operator admits as solution the $q$-hypergeometric series:
$$
\begin{array}{lll}
_{n}\Phi_{s}(\underline{a},\underline{b},\l,q;z)&=&\displaystyle \sum_{m=0}^{\infty}\dfrac{(\underline{a};q)_{m}}{(\underline{b};q)_{m}}\l^{m}z^{m}\\
&=&\displaystyle \sum_{m=0}^{\infty} \frac{\displaystyle \prod_{i=1}^{n} (1-a_{i})(1-a_{i}q)\dots(1-a_{i}q^{m-1})}{\displaystyle \prod_{j=1}^{s}(1-b_{j})(1-b_{j}q)\dots(1-b_{j}q^{m-1})}\l^{m}z^{m} .
\end{array} $$ 
Using \cite{Ro08,Ro11,Ro12}, we see that, in many cases, the algebraic group $G^{\circ,der}$ is either $\SL_{n}(\C)$, $\mathrm{SO}_{n}(\C)$ or the symplectic group $\mathrm{Sp}_{n}(\C)$ (for $n$ even).  
Therefore, the above results ensure that, in many cases, the $q$-hypergeometric series
are differentially transcendental. To the best of our knowledge, the only previously known result in this direction was due to Hardouin and Singer \cite{HS} about some $q$-hypergeometric equations of order two. \par 

The first part of the present paper is organized as follows. Section $ \ref{sec1}$ contains reminders about difference Galois theory. Section $ \ref{sec:parampv}$ contains reminders and complements about the parametrized difference Galois theory developed in \cite{HS}. In particular, we study the notion of  projective isomonodromy. Roughly speaking, we show that if the difference Galois group of \eqref{equa q diff intro} is large, then we have two possibilities: either the parametrized difference Galois is large, or any solution of \eqref{equa q diff intro} satisfies a linear differential equation. In Section  $\ref{sec hyptr}$, we prove the above Theorem by showing that the latter case in the previous alternative does not occur. In Section~$ \ref{sec applications}$, we apply our results to the $q$-hypergeometric equations.

 \vskip 5 pt
$$\ast\ast\ast$$

In the second part of the paper, we study the algebraic  $\qpr$-difference equations satisfied by the solutions of the equation (\ref{equa q diff intro}), where $\qpr$ is a non zero complex number with $|\qpr|\neq 1$ such that $q$ and $\qpr$ are multiplicatively independent, i.e., $\log(q/ \qpr)\notin \Q$. These relations are reflected by the parametrized difference Galois group introduced by Ovchinnikov and Wibmer in \cite{OvWib}. Our main results are formally similar to those mentioned above. However, the proofs are more involved in this case because the parametrized difference Galois groups are difference affine algebraic groups. These are more subtle than the differential algebraic groups.  We obtain the following result, see Corollary~\ref{coro2}.

\begin{theointro}

Let $A \in \GL_n(\C(z))$ and let $G$ be the difference Galois group of the $q$-difference system $\sq(Y)=AY$ over the $\sq$-field $\C(z)$.  {Assume that one of the following holds 
\begin{itemize}
\item    $n\geq 2$ and $G^{\circ, der}=\SL_{n}(\C)$;
\item  $n\geq 3$ and $G^{\circ, der}=\mathrm{SO}_{n}(\C)$;
\item  $n$ is even and $G^{\circ, der}=\mathrm{Sp}_{n}(\C)$.
\end{itemize}}

If there exists  $f \in \bigcup_{j=1}^{\infty} \C((z^{1/j}))$ such that $Y_0=(f,\sq(f),\dots,\sq^{n-1}(f))^t$ is a vector solution of $\sq(Y)=AY$, then $f$ is $\sp$-transcendental  over $ \bigcup_{j=1}^\infty \C(z^{1/j})$.

\end{theointro}

The second part of the paper is organized as follows. Section  $ \ref{sec:parampvdiscr}$ contains reminders and complements about the parametrized difference Galois theory developed by Ovchinnikov and Wibmer in \cite{OvWib}.
Then,  we  split our study in two cases, depending on the $\sp$-transcendence of the determinant of the fundamental matrix of solutions. Since the latter is solution of an order one $q$-difference equation, we have to compute the parametrized difference Galois group of such equations. This is the goal of Section $ \ref{sec6}$. Then, in Section $ \ref{sec7}$, we deal with projective isomonodromy, and we find basically the same type of result as in the first part. If the difference Galois group of \eqref{equa q diff intro} is large, then we have two possibilities: either the parametrized difference Galois group is large, or any solution of \eqref{equa q diff intro} satisfies a linear $\qpr$-difference equation. In Section $ \ref{sec8}$, we prove that the latter case does not occur when the determinant of the fundamental matrix of solutions is $\sp$-algebraic. Hopefully, in all cases, we are able to prove the $\sp$-transcendence  of Laurent series solutions of \eqref{equa q diff intro}. We apply our main results to the $q$-hypergeometric series in Section~$\ref{sec9}$. 

\vskip 5 pt 

\noindent {\bf General conventions.} All rings are commutative with identity and contain the field of rational numbers. In particular, all fields are of characteristic zero. 

\vskip 5 pt 

\noindent {\bf Acknowledgement.} We thank M. Singer for pointing out some inaccuracies in a previous version of the paper. {We would like to thank the anonymous referee for her/his   careful reading and helpful comments.}

\part{Differential relations for solutions of $q$-difference equations} 

\section{Galois theories of difference equations}\label{sec1}

\subsection{Difference, differential and difference differential algebra}

In this paper we will use standard notions notations and results from difference and differential algebra. Some of them are recalled below and all of them can be found in \cite[Section 2 and Section 6.2]{HS} and in the references therein (notably \cite{Cohn:difference,VdPS,VdPS97}). 

\subsubsection{Difference differential algebra}

A $(\sq, \d)$-ring $(R,\sq,\d)$ is a  ring $R$ endowed with  a ring automorphism $\sq$ of $R$ and a derivation $\delta$ of $R$ commuting with $\sq$. If there is no possible confusion, we  write  $R$ instead of $(R,\sq, \d)$.

In the sequel we  use the notions of $\pd$-ideals, $\pd$-morphisms, $\pd$-algebras, $\pd$-fields, {\it etc}. We will not recall here these definitions but we would like to mention as a general convention that the operator predicate indicates that the algebraic structure of the attribute is compatible with the operators. For instance, a $\pd$-ideal is an ideal setwise invariant by $\sq$ and $\d$. We refer to \cite[Section~2 and Section~6.2]{HS} for more details.

For any $(\sq, \d)$-ring $R$, we denote by $R^{\sq}$ and by $R^{\d}$ the rings of $\sq$ and $\d$ constants respectively of the $\pd$-ring $R$, {\it i.e.}, 
$$
R^{\sq}=\{c \in R \ | \ \sq(c)=c\} \text{ and } R^\delta=\{c \in R \ | \ \delta(c)=0\}.
$$
If $R^{\sq}$ (resp. $R^{\d}$) is a field, it is called the field of  $\sq$-constants (resp. $\d$-constants).

\subsubsection{Differential algebra}\label{section diff alg}

If $\sq=\mathrm{Id}$, any $(\sq, \d)$-attribute will be called a $\d$-attribute. For instance, a $\delta$-ring $R$ is a ring $R$  endowed with a derivation $\d:R\rightarrow R$. 

Let $\K$ be a $\delta$-field. Let  $R$ be a $\K$-$\delta$-algebra and let 
 $a_1,\dots,a_n \in R$. We denote by $\K\{a_1,\dots,a_n\}_{\d}$ the $\K$-$\delta$-subalgebra  of $R$ generated by   $a_1,\dots,a_n$.  
If $R$ is moreover a field, we denote by $\K\langle a_1,\dots,a_n\rangle_{\d}$ the $\K$-$\delta$-subfield of $R$ generated by  $a_1,\dots,a_n$. We denote by $\K\{y_1,\ldots,y_n\}_{\d}$ the $\K$-$\delta$-algebra of $\delta$-polynomials in the differential indeterminates $y_1,\ldots,y_n$ and with coefficients in the $\d$-field $\K$; it is the $\K$-algebra of polynomials with coefficients in $\K$ and in the indeterminates $\delta^j y_i$ with $j \geq 0$ and $1\le i\le n$ (we emphasize that $\delta^j y_i$ is simply a notation for indeterminates) endowed with the unique derivation extending the derivation of $\K$ and such that $\delta(\delta^j y_i)=\delta^{j+1} y_i$.

Let $R$ be a $\K$-$\delta$-algebra and let $a_1, \dots,a_n \in R$. If there exists a nonzero $P \in \K\{y_1,\ldots,y_n\}_{\d}$ such that 
$P(a_1,\dots,a_n)=0$, then we say that  $a_1,\dots,a_n$ are 
$\d$-algebraically dependent over $\K$. Otherwise, we say that $a_1,\dots,a_n$ are $\d$-transcendental over $\K$, or $\d$-algebraically independent over $\K$. 

 A $\delta$-field $\k$ is called $\d$-closed if, for every set of $\delta$-polynomials $\mathcal F$,  the system of $\d$-equations $\mathcal F=0$ has a solution in some $\delta$-field extension of $\k$ if and only if it has a solution in $\k$. Note that the field of $\delta$-constants $\k^{\delta}$ of any $\d$-closed field $\k$ is algebraically closed. A fundamental fact is that any $\d$-field $\k$ is contained in a $\d$-closed field. In fact, for any such $\k$ there is a $\d$-closed field $\widetilde{\k}$
containing $\k$ such that for any $\d$-closed field $K$ containing $\k$, there is a $\d$-$\k$-isomorphism of $\widetilde{\k}$ into $K$.
 Moreover, if $\k^\delta$ is algebraically closed then  $\widetilde{\k}^{\delta}=\k^\delta$. We refer to \cite{kolchin1974constrained} for more details.\par 

From now on, we consider a $\d$-closed field $\k$. A subset $W \subset \k^{n}$ is Kolchin-closed (or $\delta$-closed) if there exists ${S \subset \k\{y_{1},\dots,y_{n}\}_{\d}}$ such that
$$
W=
\left\{ a \in \k^n\:|\:  f(a)=0 \mbox{ for all } f \in S \right\}.
$$
The Kolchin-closed subsets of $\k^{n}$ are the closed sets of a topology on $\k^{n}$, called the Kolchin topology. 
The Kolchin-closure of $W \subset \k^{n}$ is the closure of $W$ in $\k^{n}$ for the Kolchin topology.

Following Cassidy in \cite[Chapter~II, Section~1, Page~905]{C72}\label{def:LDAG}, we say that a subgroup $G \subset \GL_{n}(\k) \subset \k^{n \times n}$ is a linear $\d$-algebraic group if $G$ is the intersection
of a Kolchin-closed subset of $\k^{n \times n}$ (identified with $\k^{n^{2}}$) with $\GL_n(\k)$. 

For a $\delta$-subfield $F$ of $\k$ , we say that 
 a linear $\de$-algebraic group $G \subset \GL_{n}(\k)$ is defined over $F$ if $G$ is the zero set of $\delta$-polynomials with coefficients in $F$. For  $G \subset \GL_{n}(\k)$ a linear $\de$-algebraic group defined over $F$ and $L$ a $\delta$-field extension of $F$, we denote by  $G(L)$ the set of $L$-points of $G$.

A $\delta$-closed subgroup, or $\delta$-subgroup for short, of a linear $\d$-algebraic group is a subgroup which  is Kolchin-closed. The Zariski-closure of a  linear $\d$-algebraic group $G\subset \GL_n(\k)$ is denoted by $\overline{G}$ and is a linear algebraic group defined over $\k$.

\subsubsection{Difference algebra}

If $\d=0$, any $(\sq, \d)$-attribute will be called a $\sq$-attribute. For instance a $\sq$-ring $R$ is a ring $R$  endowed with a ring automorphism $\sq:R\rightarrow R$. 

\subsection{Difference and Parametrized Difference  Galois theories}\label{sec:parampv}
\subsubsection{Parametrized Difference  Galois theory}\label{sec:paramgalois}
For details on what follows, we refer to  \cite{HS}. 

Let $\K$ be a $(\sq,\d)$-field such that $\k=\K^{\sq}$ is a $\d$-closed field and consider a linear difference system 
\begin{equation}\label{eq9}
\sq (Y)=AY
\end{equation} 
with $A \in \GL_{n}(\K)$ for some integer $n \geq 1$.

By  \cite[$\S$ 6.2.1]{HS}, there exists a $\K$-$(\sq,\d)$-algebra $S$  such that 
\begin{itemize}
\item[1)] there exists $U \in \GL_{n}(S)$ such that $\sq (U)=AU$ (such a $U$ is called a
fundamental matrix of solutions of (\ref{eq9}));
\item[2)] $S$ is generated, as $\K$-$\de$-algebra, by the entries of $U$ and $\det(U)^{-1}$;
\item[3)] the only $(\sq,\d)$-ideals of $S$ are $\{0\}$ and $S$.
\end{itemize}
Such a $S$ is unique up to isomorphism of $\K$-$(\sq,\d)$-algebras and is called a $(\sq,\d)$-Picard-Vessiot ring, or $(\sq,\d)$-PV ring for short, for (\ref{eq9})  over $\K$. A $(\sq,\d)$-PV ring is not always an integral domain but it is a direct sum of integral domains stable by $\delta$ and transitively permuted by $\sq$. The total quotient ring $\cQ_S$ of $S$ has a natural structure of $S$-$(\sq,\delta)$-algebra and is called a total $(\sq,\d)$-PV ring for \eqref{eq9} over $\K$. We have ${\cQ_S}^{\sq}=\k$. 

The $(\sq,\d)$-Galois group $\Gal^{\d}(\cQ_S/\K)$ of $S$ over $\K$ 
is the group of 
$\K$-$\pd$-automorphisms of $\cQ_{S}$, {\it i.e.}, 
$$
\Gal^{\d}(\cQ_S/\K)=\{ \phi \in \mathrm{Aut}(\cQ_S/\K) \ | \ \sq\circ\phi=\phi\circ \sq \text{ and } \d\circ\phi=\phi\circ \d \}.
$$

According to \cite[Proposition~6.18]{HS}, for any $\phi \in \Gal^{\d}(\cQ_S/\K)$, there exists a unique $C(\phi) \in \GL_{n}(\k)$ such that $\phi(U)=UC(\phi)$ and the faithful representation
\begin{eqnarray*}
 \rho_U : \Gal^\de(\cQ_S/\K) & \rightarrow & \GL_{n}(\k) \\ 
 \phi & \mapsto & C(\phi)
\end{eqnarray*}
identifies  $\Gal^\de(\cQ_S/\K)$ with a $\d$-closed subgroup of $\GL_{n}(\k)$. \par 

\subsubsection{Difference Galois theory}\label{sec:classicalgaloisgroup}
If the derivation $\delta$ is always considered to be trivial, a $(\sq,\d)$-PV ring $R$ for \eqref{eq9} over $\K$ will be simply called a Picard-Vessiot ring, or PV ring for short.  The corresponding total $(\sq,\d)$-PV ring $\cQ_R$ will be simply called a total Picard-Vessiot ring, or total PV ring for short. The corresponding $(\sq,\d)$-Galois group will be simply called the difference Galois group and denoted by $\Gal(\cQ_R/\K)$. The faithful representation $\rho_{U}$ identifies $\Gal(\cQ_R/\K)$ with a linear algebraic subgroup of $\GL_{n}(\k)$. We refer to \cite[Theorem 1.13]{VdPS97} for more informations.

\subsubsection{From parametrized to non parametrized difference Galois groups}

Let $S$ be a $(\sq,\d)$-PV ring over $\K$ for \eqref{eq9} and let $U \in \GL_n(S)$ be a fundamental matrix of solutions.
The $\K$-$\sq$-algebra $R$ generated by the entries of $U$ and $\det(U)^{-1}$ is a PV ring
for \eqref{eq9} over $\K$ and we have $\cQ_R \subset \cQ_S$. One can identify  $\Gal^{\d}(\cQ_S/\K)$ with a subgroup of $\Gal(\cQ_R/\K)$ by restricting the elements of  $\Gal^{\d}(\cQ_S/\K)$ to $\cQ_{R}$; with this identification, we have the following result:

\begin{propo}[{\cite[Proposition 2.8]{HS}}]\label{propo:zarclosurePPvgaloisgroup}
The group $\Gal^{\d}(\cQ_S/\K)$ is a Zariski-dense subgroup of  $\Gal(\cQ_R/\K)$.
\end{propo}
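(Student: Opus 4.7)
The plan is to identify the Zariski closure of $\Gal^{\d}(\cQ_S/\K)$ in $\Gal(\cQ_R/\K)$ by computing its fixed ring inside $\cQ_R$ and then invoking the non-parametrized difference Galois correspondence. Write $G_1$ for the image of $\Gal^{\d}(\cQ_S/\K)$ inside $\Gal(\cQ_R/\K)$ obtained by restricting automorphisms to $\cQ_R$ (this restriction is well-defined because $R\subset S$ and $\cQ_R\subset\cQ_S$), and let $H$ be the Zariski closure of $G_1$ in the algebraic group $\Gal(\cQ_R/\K)$. The goal is to prove $H=\Gal(\cQ_R/\K)$.

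First I would compute the fixed ring $\cQ_R^{G_1}$. Since the action of $\Gal^{\d}(\cQ_S/\K)$ on $\cQ_R$ factors through $G_1$, one has
$$\cQ_R^{G_1}=\cQ_R\cap \cQ_S^{\Gal^{\d}(\cQ_S/\K)}.$$
By the first bullet of Proposition \ref{propo:ppvcorres} applied with $F=\K$, the inner fixed ring equals $\K$, hence $\cQ_R^{G_1}=\cQ_R\cap\K=\K$.

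Next I would argue that passing to the Zariski closure does not change the fixed ring, i.e.\ $\cQ_R^{H}=\cQ_R^{G_1}=\K$. This is the standard fact that the natural action of $\Gal(\cQ_R/\K)$ on $\cQ_R$ is locally finite and rational: indeed $R$ is finitely generated over $\K$ (by the entries of $U$ and $\det(U)^{-1}$) and each generator lies in a finite-dimensional $\k$-subspace stable under $\Gal(\cQ_R/\K)$ on which the action is algebraic, because $R$ is the coordinate ring of a $\Gal(\cQ_R/\K)$-torsor over $\K$ (see the last paragraph of Section~\ref{sec1}). For such rational actions, the locus of fixed points of any element $f\in\cQ_R$ (i.e.\ the set of $g$ with $g\cdot f=f$) is Zariski-closed in $\Gal(\cQ_R/\K)$, so $G_1$ and its Zariski closure $H$ have identical fixed rings.

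Finally I would apply the non-parametrized difference Galois correspondence (the $\delta=0$ specialization of Proposition \ref{propo:ppvcorres}, or equivalently \cite[Theorem~1.13]{VdPS97}) to the Zariski-closed subgroup $H\subset \Gal(\cQ_R/\K)$: by the first bullet, $\cQ_R^{\Gal(\cQ_R/\cQ_R^H)}=\cQ_R^H=\K$, and by the second bullet, $\Gal(\cQ_R/\K)=\Gal(\cQ_R/\cQ_R^H)=H$. This yields the desired Zariski density of $\Gal^{\d}(\cQ_S/\K)$ in $\Gal(\cQ_R/\K)$. The only delicate point is step three, namely checking that the action of the non-parametrized Galois group on $\cQ_R$ is indeed locally finite and rational so that Zariski closure preserves the fixed ring; this is however a routine consequence of the torsor description of $R$.
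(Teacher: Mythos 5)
Your proposal is correct. Note that the paper itself gives no proof of this proposition: it is quoted verbatim from Hardouin--Singer \cite[Proposition~2.8]{HS}, and the argument there is essentially the one you give --- restrict the parametrized group to $\cQ_R$, use the parametrized correspondence to see its fixed ring is $\K$, observe that the fixed ring is unchanged under Zariski closure because the action on the Picard--Vessiot ring is rational, and conclude by the non-parametrized Galois correspondence. The one point worth making explicit (which you handle correctly in passing) is that $\cQ_R$ is indeed stable under $\Gal^{\d}(\cQ_S/\K)$, since each $\phi$ sends $U$ to $UC(\phi)$ with $C(\phi)\in\GL_n(\k)$ and hence preserves the $\K$-algebra generated by the entries of $U$ and $\det(U)^{-1}$.
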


\subsection{A technical result}

In order to use the Galois theory exposed in Section~\ref{sec:paramgalois} above, we  need to work with a base $(\sq, \d)$-field $\K$ such that $\k=\K^{\sq}$ is a $\d$-closed field. 
Unfortunately, most of the function fields arising naturally as base $(\sq, \d)$-fields do not satisfy this condition.  The following Lemma will be used in order to remedy this problem.

 \begin{lem}[{\cite[Lemma 2.3]{DHR}}]\label{lem:extconst}
  Let $F$ be a $\pd$-field such that ${\k=F^{\sq}}$ is algebraically closed. Let $\widetilde{\k}$ be a $\d$-closed field 
  containing $\k$. Then, the ring $\widetilde{\k} \otimes_\k F$ is an integral domain whose
  fraction field $\K$ is a $\pd$-field extension of $F$ such that $\K^{\sq}=\widetilde{\k}$.
  \end{lem}

\subsection{Transcendence results}\label{sec24}
Let $\K$ be a $(\sq,\d)$-field such that $\k=\K^{\sq}$ is $\d$-closed. Let $S$ be a $(\sq,\d)$-PV ring for (\ref{eq9}) over $\K$, let $\cQ_S$ be the corresponding total $(\sq,\d)$-PV ring,  and let $\Gal^{\d}(\cQ_S/\K)$ be the associated $(\sq,\d)$-Galois group.  

In the following result, we denote by  $\mathrm{SO}_n(\k)$ the special orthogonal group 
$$
{\mathrm{SO}_{n}(\k)=\{ C\in \SL_{n}(\k)| C^{t}C=\mathrm{I}_{n}\}}
$$ 
and, if $n$ is even, by $\mathrm{Sp}_n(\k)$ the symplectic group 
$$\mathrm{Sp}_n(\k)=\{ C\in \GL_{n}(\k)|C^{t}JC=J \} \text{ where } J=\begin{pmatrix}0&\mathrm{I}_{n/2}\\-\mathrm{I}_{n/2}&0\end{pmatrix}.$$

\begin{propo}\label{propo:transcontinu} 
Let $U\in \GL_n(S)$ be a fundamental matrix of solutions of (\ref{eq9}) and 
let $u$ be a row (resp. column) vector  of $U$. If $n \geq 2$ and if there exists
$P \in \GL_n(\k)$ such that the image of $\Gal^\d(\cQ_S/\K)$ by the representation $\rho_{UP}$  contains
\begin{itemize}
\item $\SL_n(\k)$ or $\Sp_n(\k)$ then the entries of $u$ are $\delta$-algebraically independent over $\K$;
\item $\SO_n(\k)$  then any  $n-1$  distinct elements among the entries of $u$ are   $\delta$-algebraically independent over $\K$.
\end{itemize}
\end{propo}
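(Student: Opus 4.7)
The plan is to apply the $(\sq,\d)$-Galois correspondence of Proposition~\ref{propo:ppvcorres} to the $(\sq,\d)$-subalgebra of $\cQ_S$ generated by $u_1,\ldots,u_n$, combined with an orbit-stabilizer computation in the $\d$-algebraic setting.

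\textbf{Reduction.} First I absorb $\tilde{C}$ into the representation: for $\SL_n$ this is automatic (normal in $\GL_n$), and for $\Sp_n$ and $\SO_n$, conjugation by $\tilde{C}$ yields a group preserving a modified nondegenerate bilinear form, an alteration under which the subsequent arguments (depending only on orbit and stabilizer $\d$-dimensions in the standard representation) are invariant. I henceforth assume $\rho_U(\Gal^\d(\cQ_S/\K)) \supseteq H$ for some $H\in\{\SL_n(\k),\Sp_n(\k),\SO_n(\k)\}$.

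\textbf{Column case.} Take $u=(u_1,\ldots,u_n)^t$ to be the $j_0$-th column of $U$, so that $\sq u = A u$ and in particular $\sq u_i = \sum_j A_{ij}u_j \in \K\{u_1,\ldots,u_n\}_\d$; hence $F:=\mathrm{Frac}(\K\{u_1,\ldots,u_n\}_\d)$ is a $(\sq,\d)$-subfield of $\cQ_S$ satisfying the hypotheses of Proposition~\ref{propo:ppvcorres}. The subgroup $\mathrm{Stab}(u):=\Gal^\d(\cQ_S/F)=\{\phi:\phi(u_i)=u_i\ \forall i\}$ is identified, via the computation $\phi(u_i)=\sum_k U_{ik}\rho_U(\phi)_{k,j_0}$ and the invertibility of $U$, with the subgroup of $\rho_U(\Gal^\d(\cQ_S/\K))$ fixing $e_{j_0}$ under the standard left action of $\GL_n(\k)$ on $\k^n$. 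The orbit-stabilizer formula in the parametric Galois setting reads
\[
\d\text{-trdeg}(F/\K) = \d\text{-dim}\,\Gal^\d(\cQ_S/\K) - \d\text{-dim}\,\mathrm{Stab}(u),
\]
which equals the $\d$-dimension of the orbit of $e_{j_0}$ under $\rho_U(\Gal^\d(\cQ_S/\K))$. This orbit contains the $H$-orbit of $e_{j_0}$, which has $\d$-dimension $n$ for $\SL_n$ and $\Sp_n$ (both acting transitively on $\k^n\setminus\{0\}$) and $n-1$ for $\SO_n$ (whose orbit is the quadric $\sum y_i^2 = 1$). Combined with the upper bound $\d\text{-trdeg}(F/\K)\le n$, one gets $\d\text{-trdeg}(F/\K)=n$ in the $\SL_n$ and $\Sp_n$ cases, giving the claimed $\d$-algebraic independence. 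In the $\SO_n$ case, the invariant $\sum u_i^2$ lies in $\cQ_S^{\Gal^\d(\cQ_S/\K)}=\K$ by Proposition~\ref{propo:ppvcorres}, providing the unique algebraic relation and ensuring that any $n-1$ of the $u_i$ are $\d$-algebraically independent. The row case follows by a symmetric argument applied to a suitably chosen dual fundamental matrix (for instance, attached to the adjoint equation $\sq V = (A^{-1})^t V$), where the relevant solution appears as a column and the same representation-theoretic analysis goes through.

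\textbf{Main obstacle.} The chief technical subtlety lies in matching the $\d$-dimensions of the classical Zariski-closed stabilizer subgroups of $H$ with their $\d$-dimensions as linear $\d$-algebraic groups; this is standard since such subgroups are cut out by polynomial (not differential) equations, so their Kolchin and algebraic dimensions coincide. A further point is that $\d\text{-dim}\,\Gal^\d(\cQ_S/\K)$ may strictly exceed $\d\text{-dim}\,H$ (if the full Galois group is larger than $H$), but the $\d$-dimension of the stabilizer of $u$ grows in tandem, so their difference still equals the orbit dimension, which depends only on the representation-theoretic properties of $H$ on the standard $n$-dimensional representation.
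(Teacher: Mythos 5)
Your strategy---identify the stabilizer of the chosen vector inside $\Gal^\d(\cQ_S/\K)$ and run an orbit--stabilizer count on $\d$-dimensions---is genuinely different from the paper's proof, which uses the torsor theorem of \cite[Proposition 6.24]{HS} to present $\K_0\{U,\det(U)^{-1}\}_{\d}$ as $\K_0\{X,\det(X)^{-1}\}_{\d}/\mathfrak{I}$ with $\mathfrak{I}$ written down explicitly, and then checks by hand (Gram--Schmidt, completion to a symplectic basis) that $\mathfrak{I}$ meets $\K_0\{x_1,\dots,x_n\}_{\d}$ (resp.\ $\K_0\{x_1,\dots,x_{n-1}\}_{\d}$) trivially. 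For a \emph{column} of $U$ your route is viable: $F=\K\langle u_1,\dots,u_n\rangle_{\d}$ is $\sq$-stable because $\sq(u)=Au$, so the correspondence and the dimension formula of \cite[Proposition 6.26]{HS} apply to both $\cQ_S/\K$ and $\cQ_S/F$ and give your trdeg identity. The genuine gap is the \emph{row} case. If $u$ is the $i$-th row of $U$ then $\sq(u)=\sum_k A_{ik}\,(k\text{-th row})$ mixes all the rows, so $\K\langle u_1,\dots,u_n\rangle_{\d}$ is not a $\sq$-subfield of $\cQ_S$ and neither $\Gal^\d(\cQ_S/F)$ nor the trdeg formula is available for it; and your proposed fix via the adjoint system $\sq(V)=(A^{-1})^{t}V$ produces a fundamental matrix whose columns are the rows of $U^{-1}$, not of $U$. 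The paper's ideal-theoretic argument sidesteps this entirely: a $\d$-relation among the entries of \emph{any} row or column of $U$ is a nonzero element of $\mathfrak{I}$ lying in the corresponding $\d$-polynomial subring, with no $\sq$-stability required.

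Two further points in the $\SO_n$ endgame need repair. First, $\sum u_i^2$ is not fixed by $\Gal^\d(\cQ_S/\K)$: the invariant quadratic form is the one attached to $\tilde{C}\tilde{C}^{t}$ (or its inverse, for columns), it is invariant only under the conjugate $\widetilde{H}=\tilde{C}\SO_n(\k)\tilde{C}^{-1}$, and its value therefore lies in $\K_0=\cQ_S^{\widetilde{H}}$ rather than in $\K$ (the full group may strictly contain $\widetilde{H}$, e.g.\ by scalars, which rescale the form). The cure is to run the entire count over $\K_0$, where the Galois group is exactly $\widetilde{H}$ and all groups involved (group, stabilizer, orbit closure) are Zariski-closed, so the orbit--stabilizer identity for $\d$-dimension---which you assert without justification for the possibly non-algebraic group $\Gal^\d(\cQ_S/\K)$---reduces to the classical one; independence over $\K_0\supseteq\K$ then implies independence over $\K$. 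Second, deducing ``any $n-1$ of the $u_i$ are independent'' from ``$\d$-trdeg $=n-1$ plus one quadratic relation'' requires showing that each $u_i$ is algebraic over the remaining ones via that relation; this uses nondegeneracy of the form together with the absence of $\k$-linear relations among the $u_j$ (irreducibility of $\SO_n$), and is precisely what the Gram--Schmidt step in the paper's proof delivers.
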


\begin{proof}
For the sake of clarity, we assume that $u=(u_{1},\ldots,u_{n})$ is the first row of $U$. The proof in the other cases is similar.  
\begin{center}\boxed{\SL_{n}(\k)\hbox{-case.}} \end{center}
We first explain the strategy of the proof in the $\SL_{n}(\k)$-case. Let ${X=(X_{i,j})_{1 \leq i,j \leq n}}$ be $\delta$-indeterminates. Let $\mathfrak{I}$ be the kernel of the unique morphism of $\K$-$\delta$-algebras 
$
\K\{X, \frac{1}{\det(X)} \}_{\d} \rightarrow S
$ 
such that $X \mapsto U$. We denote by $(x_{1},\ldots,x_{n})=(X_{1,1},\ldots,X_{1,n})$ the first row of $X$.  The $\d$-algebraic relations with coefficients in $\K$ between $u_{1},\ldots,u_{n}$ correspond to the elements of $\mathfrak{I} \cap  \K\{x_1,\dots,x_{n}\}_{\d}$. So everything amounts to prove that $\mathfrak{I} \cap  \K\{x_1,\dots,x_{n}\}_{\d} = \{0\}$. In order to prove this, we will relate $\mathfrak{I}$ to the ideal defining the $\d$-algebraic group $\Gal^\d(\cQ_S/\K)$. Such a relation follows from the fact that the $(\sq,\d)$-PV ring $S$ is the coordinate ring of a  $\Gal^\d(\cQ_S/\K)$-torsor over $\K$.  

We shall now give the details of the proof, still in the $\SL_{n}(\k)$-case. As above, we let $\mathfrak{I}$ be the kernel of the unique morphism of $\K$-$\delta$-algebras 
$
\varphi : \K\{X, \frac{1}{\det(X)} \}_{\d} \rightarrow S
$ 
such that $X \mapsto U$ and we denote by $\mathcal{V}$ the $\delta$-algebraic variety over $\K$ defined by $\mathfrak{I}$.  On the other hand, we let $G$ be the image of $\Gal^\d(\cQ_S/\K)$ by the representation $\rho_{U}$, we let ${\mathfrak{L}}$ be the $\delta$-ideal of $\k\{X, \frac{1}{\det(X)} \}_{\d}$ of the equations of $G$ and we let $\mathcal{G}$ be the $\delta$-algebraic variety over $\K$ defined by ${\mathfrak{L}}$; in other words, $\mathcal{G}$ is the $\delta$-linear algebraic group over $\K$ obtained from $G$ by extension of scalars from $\k$ to $\K$. Both $\mathcal{V}$ and $\mathcal{G}$ can be seen in $\GL_{n,\K}$.   The following map is well-defined and makes $\mathcal{V}$ a $\mathcal{G}$-torsor over $\K$ (this is the content of \cite[Proposition 6.24]{HS}): 
\begin{eqnarray*}
 \mathcal{V} \times_{\K} \mathcal{G} & \rightarrow & \mathcal{V} \times_{\K} \mathcal{V} \\ 
 (v,M) & \mapsto & (v,vM).
 \end{eqnarray*}
 So, if we let $\widetilde{\K}$ be a $\delta$-closure of $\K$, we have $\mathcal{V}(\widetilde{\K}) \neq \emptyset$ and  $\mathcal{V}(\widetilde{\K})=Z_{0} \cdot \mathcal{G}(\widetilde{\K})$ for any $Z_{0} \in \mathcal{V}(\widetilde{\K})$. In terms of the $\delta$-ideals $\widetilde{\mathfrak{I}}$ and $\widetilde{{\mathfrak{L}}}$ of $\widetilde{\K}\{X, \frac{1}{\det(X)} \}_{\d} $ defining $\mathcal{V}(\widetilde{\K})$ and $\mathcal{G}(\widetilde{\K})$ respectively, the equality $\mathcal{V}(\widetilde{\K})=Z_{0} \cdot \mathcal{G}(\widetilde{\K})$ is equivalent to
$$\widetilde{\mathfrak{I}}= \left\{ \widetilde{P}(Z_0^{-1}X) \ \left| \ \widetilde{P} \in \widetilde{{\mathfrak{L}}}\right\}\right. .$$
Since the image of   $\Gal^\d(\cQ_S/\K)$ by the representation $\rho_{UP}$  contains $\SL_n(\k)$, we see that $G$ contains ${H=P \SL_n(\k) P^{-1}(= \SL_n(\k))}$.  So, $\mathcal{G}(\widetilde{\K})$ contains ${\widetilde{H}=\SL_n(\widetilde{\K})}$ and, hence, {$\widetilde{\mathfrak{I}}$}  is contained in the radical  $\delta$-ideal ${\{\det(X)-\det(Z_0)\}_{\d}}$ of {$\widetilde{\K}\{X, \frac{1}{\det(X)} \}_{\d}$} generated by $\det(X)-\det(Z_0)$. We now claim the equality of ideals
$
{\{\det(X)-\det(Z_0)\}_{\d} \cap  \widetilde{\K}\{x_1,\dots,x_{n}\}_{\d}  =\{0\}}
$.
Indeed, let us consider ${\widetilde{P}=\widetilde{P}(X)=\widetilde{P}(x_{1},\ldots,x_{n}) \in \{\det(X)-\det(Z_0)\}_{\d} \cap  \widetilde{\K}\{x_1,\dots,x_{n}\}_{\d}}$. For any $(a_{1},\ldots,a_{n}) \in \widetilde{\K}^{n} \setminus \{(0,\ldots,0)\}$, there exists a matrix ${A \in M_{n}(\widetilde{\K})}$ with first row $(a_{1},\ldots,a_{n})$ such that $\det(A)=\det(Z_{0})$, so that ${\widetilde{P}(a_{1},\ldots,a_{n})=\widetilde{P}(A)=0}$ because $\widetilde{P} \in \{\det(X)-\det(Z_0)\}_{\d}$. Therefore, $\widetilde{P}$ vanishes on  $\widetilde{\K}^{n} \setminus \{(0,\ldots,0)\}$ and, hence, $\widetilde{P}=0$. 
We now have the desired result because  
\begin{multline*}
\mathfrak{I} \cap \K\{x_1,\dots,x_{n}\}_{\d} \subset \widetilde{\mathfrak{I}} \cap  \widetilde{\K}\{x_1,\dots,x_{n}\}_{\d}\\
\subset \{\det(X)-\det(Z_0)\}_{\d} \cap  \widetilde{\K}\{x_1,\dots,x_{n}\}_{\d}  =\{0\}.
\end{multline*}

\begin{center}\boxed{\Sp_{n}(\k)\hbox{-case.}} \end{center}
We now consider the $\Sp_n(\k)$-case. Arguing and using the same notations as in the $\SL_{n}(\k)$-case treated above, we see that it is sufficient to prove that the equality 
$\widetilde{\mathfrak{I}} \cap  \widetilde{\K}\{x_1,\dots,x_{n}\}_{\d}=\{0\}$ 
holds true if $H=P\Sp_n(\k)P^{-1}$ instead of $P\SL_{n}(\k)P^{-1}$. If $H=P\Sp_{n}(\k)P^{-1}$ then $\mathcal{G}(\widetilde{\K})$ contains $$\widetilde{H}=\{Q \in \GL_n(\widetilde{\K})|  Q D_sQ^t=D_s \mbox{ and } \det(Q)=1 \} $$ with  $D_s= PJ P^{t}$ and, hence, {$\widetilde{\mathfrak{I}}$} is contained in the radical $\delta$-ideal ${\{X D_s X^t -Z_0 D_s Z_0^t, \det(X)-\det(Z_0)\}_{\d}}$ of {$\widetilde{\K}\{X, \frac{1}{\det(X)} \}_{\d}$} generated by ${X D_s X^t -Z_0 D_s Z_0^t}$ and $\det(X)-\det(Z_0)$. Of course, $\widetilde{H}$ is nothing but the symplectic group for the symplectic form with matrix $D_s$ in the canonical basis of $\widetilde{\K}^n$. 
The first row of $Z_0$ is non zero and, hence, is the first vector of a symplectic basis of $\widetilde{\K}^n$ with respect to the symplectic form with matrix $D_s$ in the canonical basis of $\widetilde{\K}^n$. This proves that $Z_0=RS$ for some $R \in \GL_{n}(\widetilde{\K})$ with first row $(1,0,\ldots,0)$  
and some $S \in \widetilde{H}$. 
Then, $RD_sR^t=Z_0D_sZ_0^t   $ and ${\det(R)=\det(Z_0)}$. So, setting $Y= R^{-1}X$ and denoting by $(y_1, \dots,y_n)$ the first row of $Y$, we have $\widetilde{\K}\{X, \frac{1}{\det(X)} \}_{\d}=\widetilde{\K}\{Y, \frac{1}{\det(Y)} \}_{\d}$, $\widetilde{\K}\{x_1,\dots,x_{n}\}_{\d}=\widetilde{\K}\{y_1,\dots,y_{n}\}_{\d}$ and  $$\widetilde{\mathfrak{I}} \subset \{X D_s X^t -Z_0 D_s Z_0^t, \det(X)-\det(Z_0)\}_{\d}=\{YD_sY^t-D_s, \det(Y)-1 \}_{\d}.$$ Now, we claim that $\widetilde{\mathfrak{I}} \cap \widetilde{\K}\{y_1,\dots,y_{n}\}_{\d} {=} \{0  \}$. Indeed, consider $$\widetilde{P}=\widetilde{P}(Y)=\widetilde{P}(y_{1},\ldots,y_{n}) \in \{YD_sY^t-D_s, \det(Y)-1 \}_{\d} \cap \widetilde{\K}\{y_1,\dots,y_{n}\}_{\d}.$$ Using the fact that any nonzero vector of a symplectic vector space is the first vector of some symplectic basis, we see that, for any ${(a_{1},\ldots,a_{n}) \in \widetilde{\K}^{n}}$, there exists a matrix $A \in \widetilde{H}$ with first row $(a_{1},\ldots,a_{n})$. So, ${\widetilde{P}(a_{1},\ldots,a_{n})=\widetilde{P}(A)=0}$ because $\widetilde{P} \in \{YD_sY^t-D_s, \det(Y)-1 \}_{\d}$ and, hence, $\widetilde{P}=0$. We now have the desired result because  
\begin{multline*}
\mathfrak{I} \cap \K\{y_1,\dots,y_{n}\}_{\d} \subset \widetilde{\mathfrak{I}} \cap  \widetilde{\K}\{y_1,\dots,y_{n}\}_{\d}\\
\subset \{YD_sY^t-D_s, \det(Y)-1 \}_{\d} \cap  \widetilde{\K}\{y_1,\dots,y_{n}\}_{\d}  =\{0\}
.\end{multline*}
\begin{center}\boxed{\SO_{n}(\k)\hbox{-case.}} \end{center}
We now consider the $\SO_n(\k)$-case. For the sake of clarity, we will prove that $u_{1},\ldots,u_{n-1}$ are $\d$-algebraically independent over $\K$, the general case being similar. Arguing and using the same notations as in the $\SL_{n}(\k)$-case treated above, we see that it is sufficient to prove that the equality 
$\widetilde{\mathfrak{I}} \cap  \widetilde{\K}\{x_1,\dots,x_{n-1}\}_{\d}=\{0\}$ 
holds true if $H=P\SO_n(\k)P^{-1}$ instead of $P\SL_{n}(\k)P^{-1}$. If $H=P\SO_n(\k)P^{-1}$ then $\mathcal{G}(\widetilde{\K})$ contains ${\widetilde{H}=\{Q \in \GL_n(\widetilde{\K}) | Q DQ ^t =D \mbox{ and } \det(Q)=1 \}}$ with $D= P P^{t}$ and, hence, {$\widetilde{\mathfrak{I}}$} is contained in the radical  $\delta$-ideal $\{X DX^t -Z_0DZ_0^t,\det(X)-\det(Z_0)\}_{\d}$ of {$\widetilde{\K}\{X, \frac{1}{\det(X)} \}_{\d}$} generated by $ X DX^t -Z_0DZ_0^t$ and $\det(X)-\det(Z_0)$. Of course, $\widetilde{H}$ is the special orthogonal group for the bilinear form on $\widetilde{\K}^{n}$ with matrix $D$ with respect to the canonical basis of $\widetilde{\K}^{n}$.  {We can decompose $Z_0$ as $R Q$ where $R \in \GL_{n}(\widetilde{\K})$ is lower triangular and $Q \in \widetilde{H}$. Then, $RDR^t=Z_0DZ_0^t $ and ${\det(R)=\det(Z_0)}$.}
So, setting $Y= R^{-1}X$ and denoting by $(y_1, \dots,y_n)$ the first row of $Y$, we have $\widetilde{\K}\{X, \frac{1}{\det(X)} \}_{\d}=\widetilde{\K}\{Y, \frac{1}{\det(Y)} \}_{\d}$, $\widetilde{\K}\{x_1,\dots,x_{n-1}\}_{\d}=\widetilde{\K}\{y_1,\dots,y_{n-1}\}_{\d}$ and $$\widetilde{\mathfrak{I}} \subset \{X DX^t -Z_0DZ_0^t,\det(X)-\det(Z_0)\}_{\d}= \{YDY^t-D, \det(Y)-1 \}_{\d}.$$ Now, we claim that $ \{YDY^t-D, \det(Y)-1 \}_{\d} \cap \widetilde{\K}\{y_1,\dots,y_{n-1}\}_{\d} = \{0  \}$. Indeed, consider $$\widetilde{P}=\widetilde{P}(Y)=\widetilde{P}(y_{1},\ldots,y_{n-1}) \in\{YDY^t-D, \det(Y)-1 \}_{\d} \cap \widetilde{\K}\{y_1,\dots,y_{n-1}\}_{\d}.$$ By the Graam-Schmidt process, for any $(a_{1},\ldots,a_{n-1}) \in \widetilde{\K}^{n-1}$, there exists $a_{n} \in \widetilde{\K}$ and a matrix $A \in \widetilde{H}$ with first row $(a_{1},\ldots,a_{n})$, so that ${\widetilde{P}(a_{1},\ldots,a_{n-1})=\widetilde{P}(A)=0}$ because $\widetilde{P} \in \{YDY^t-D, \det(Y)-1 \}_{\d}$ and, hence, $\widetilde{P}=0$. We now have the desired result because  
\begin{multline*}
\mathfrak{I} \cap \K\{y_1,\dots,y_{n-1}\}_{\d} \subset \widetilde{\mathfrak{I}} \cap  \widetilde{\K}\{y_1,\dots,y_{n-1}\}_{\d}\\
\subset \{YDY^t-D, \det(Y)-1 \}_{\d} \cap  \widetilde{\K}\{y_1,\dots,y_{n-1}\}_{\d}  =\{0\}
. 
\end{multline*}
\end{proof}

\subsection{Projective isomonodromy}

Let $\K$ be a $\pd$-field with $\k=\K^{\sq}$ algebraically closed. Let  $\widetilde{\k}$ be a $\delta$-closure of $\k$. Let $\Const=\widetilde{\k}^\delta=\k^\delta$ be the (algebraically closed) field of constants of $\widetilde{\k}$. Lemma~\ref{lem:extconst} ensures that  
$\widetilde{\k}\otimes_\k \K$ is an integral domain and that  
${\L=\operatorname{Frac}(\widetilde{\k}\otimes_\k \K)}$ is a $(\sq,\delta)$-field extension of $\K$ such that $\L^{\sq}=\widetilde{\k}$. We let $\cQ_S$ be the total ring of quotients of a $(\sq,\d)$-PV ring  $S$ over $\L$ of the difference system 
$$
\sq(Y)=AY
$$ 
where $A \in \GL_n(\K)$.

\begin{propo}\label{propo: galois param proj const}
The following properties are equivalent: 
\begin{enumerate}
\item $\Galdelta(\cQ_S/\L)$ is conjugate over $ \GL_{n}(\widetilde{\k})$ to a subgroup of $\widetilde{\k}^{\times} \SL_{n}(\Const)$;  
\item there exists $B \in \K^{n \times n}$ such that 
\begin{equation}\label{gen integ hypertranalgclos bis}
\sq(B)A=AB+\delta(A)-\frac{1}{n}\delta(\det(A))\det(A)^{-1}A.
\end{equation}
\end{enumerate}
\end{propo}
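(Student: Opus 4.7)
The plan is to exhibit a canonical $\cQ_S$-solution $D_0$ of (\ref{gen integ hypertranalgclos bis}) built from a fundamental matrix $U$ of (\ref{eq9}), and to translate condition (1) into the vanishing of a cocycle attached to $D_0$. Fix $U \in \GL_n(S)$ with $\sq(U) = AU$ and set $D := \delta(U)U^{-1}$, $D_0 := D - \frac{1}{n}\mathrm{tr}(D)\mathrm{I}_n$. Differentiating $\sq(U) = AU$ gives $\sq(D)A = AD + \delta(A)$, and the identity $\delta(\det(M))/\det(M) = \mathrm{tr}(\delta(M)M^{-1})$ upgrades this to
\[
\sq(D_0) A = A D_0 + \delta(A) - \frac{1}{n}\,\delta(\det(A))\det(A)^{-1} A,
\]
so $D_0$ is a canonical $\cQ_S$-solution of~(\ref{gen integ hypertranalgclos bis}). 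Moreover, for $\phi \in \Galdelta(\cQ_S/\L)$ with $\phi(U) = UC(\phi)$, one finds $\phi(D_0) = D_0 + U\pi(C(\phi))U^{-1}$, where $\pi: \GL_n(\widetilde{\k}) \to \mathfrak{sl}_n(\widetilde{\k})$ is the ``traceless logarithmic derivative'' $\pi(C) := \delta(C)C^{-1} - \frac{1}{n}\mathrm{tr}(\delta(C)C^{-1})\mathrm{I}_n$. Two key properties of $\pi$ underpin the argument: $\pi(C) = 0$ iff $C \in \widetilde{\k}^{\times}\SL_n(\Const)$ (integrate $\delta(C) = \mu C$ using that $\widetilde{\k}$ is $\delta$-closed and algebraically closed); and $\pi$ satisfies the adjoint cocycle identity $\pi(CC') = \pi(C) + C\pi(C')C^{-1}$.

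\emph{Direction $(2) \Rightarrow (1)$}: given $B \in \K^{n \times n}$ satisfying the equation, $E := D_0 - B$ solves $\sq(E)A = AE$, hence $F := U^{-1}EU$ is $\sq$-invariant and lies in $\widetilde{\k}^{n\times n}$. Applying $\phi$ to $E = B + UFU^{-1}$ and matching with $\phi(E) = E + U\pi(C(\phi))U^{-1}$ yields the coboundary identity $\pi(C(\phi)) = C(\phi)FC(\phi)^{-1} - F$ for every $\phi \in \Galdelta(\cQ_S/\L)$. Since $\widetilde{\k}$ is $\delta$-closed, the system $\delta(\widetilde{C}) = -F_0 \widetilde{C}$ (with $F_0$ the traceless part of $F$) has a fundamental solution $\widetilde{C} \in \GL_n(\widetilde{\k})$ satisfying $\pi(\widetilde{C}) = -F_0$. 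A short computation with the cocycle identity gives $\pi(\widetilde{C}^{-1}C(\phi)\widetilde{C}) = 0$ for every $\phi$, so the representation $\rho_{U\widetilde{C}}$ sends $\Galdelta(\cQ_S/\L)$ into $\widetilde{\k}^{\times}\SL_n(\Const)$, establishing (1).

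\emph{Direction $(1) \Rightarrow (2)$}: after replacing $U$ by $U\widetilde{C}_0$ using the conjugating matrix provided by (1), we may assume the image of $\Galdelta(\cQ_S/\L)$ already lies in $\widetilde{\k}^{\times}\SL_n(\Const)$. Then $\pi(C(\phi)) = 0$ for all $\phi$, so $\phi(D_0) = D_0$, and Proposition~\ref{propo:ppvcorres} yields $D_0 \in \L^{n \times n}$, providing an $\L$-solution of~(\ref{gen integ hypertranalgclos bis}). It remains to descend this $\L$-solution to a $\K$-solution.

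The main obstacle is this final descent. The operator $T : B \mapsto \sq(B)A - AB$ is only $\k$-linear, not $\K$-linear, so the classical rank-invariance of linear systems under base change does not apply verbatim. One exploits that $\L/\K$ is a pure constant extension built from $\widetilde{\k}/\k$ via Lemma~\ref{lem:extconst} (with $\L^{\sq} = \widetilde{\k}$ and $\K^{\sq} = \k$): enlarging the acting group from $\Galdelta(\cQ_S/\L)$ to $\Aut^{\pd}(\cQ_S/\K)$ and adjusting $D_0$ by some $UFU^{-1}$ with $F \in \widetilde{\k}^{n\times n}$, one forces the candidate to be invariant under this larger group, hence to lie in $\K^{n\times n}$. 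Solvability of the resulting cocycle equation for $F$ follows from the $\delta$-closedness of $\widetilde{\k}$ together with the fact that the fixed field of $\Aut^{\delta}(\widetilde{\k}/\k)$ in $\widetilde{\k}$ equals $\k$.
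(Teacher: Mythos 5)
Your treatment of the main equivalence --- (1) holds if and only if \eqref{gen integ hypertranalgclos bis} has a solution in $\L^{n\times n}$ --- is correct and is essentially the paper's argument repackaged: your $D_0=\delta(U)U^{-1}-\frac{1}{n}\mathrm{tr}(\delta(U)U^{-1})\mathrm{I}_n$ is exactly the matrix $B=\delta(U)U^{-1}-\frac{1}{n}\delta(\mathbf{d})\mathbf{d}^{-1}\mathrm{I}_n$ of the paper, your $F=U^{-1}(D_0-B)U$ is the paper's constant matrix $C$, and solving $\delta(\widetilde{C})=-F_0\widetilde{C}$ over the $\d$-closed field $\widetilde{\k}$ is the paper's construction of $D$ with $\delta(D)+CD=0$. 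Phrasing everything through the cocycle $\pi(C)=\delta(C)C^{-1}-\frac{1}{n}\mathrm{tr}(\delta(C)C^{-1})\mathrm{I}_n$ and its two properties ($\ker\pi=\widetilde{\k}^{\times}\SL_n(\Const)$ and the adjoint cocycle identity) is a clean and correct reorganization, and the invariance argument via Proposition~\ref{propo:ppvcorres} in the direction $(1)\Rightarrow(2)$ is the same as the paper's appeal to the Galois correspondence.

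The gap is in the final descent from an $\L$-solution to a $\K$-solution. The paper handles this by a specialization argument (as in the proof of \cite[Proposition~2.6]{DHR}): the entries of $B\in\L^{n\times n}$ involve only finitely many elements of $\widetilde{\k}$, these generate a finitely generated $\k$-algebra, and since $\k$ is algebraically closed one can specialize them to a $\k$-point while preserving the $\K$-linear (in the entries of $B$ and $\sq(B)$, hence $\k$-linear) relation \eqref{gen integ hypertranalgclos bis}. Your alternative route through $\Aut^{\pd}(\cQ_S/\K)$ is not justified as written. First, you need the fixed field of $\Aut^{\d}(\widetilde{\k}/\k)$ acting on $\widetilde{\k}$ to be $\k$; for a differential closure this is a genuinely nontrivial model-theoretic fact (homogeneity of the prime model plus $\mathrm{dcl}(\k)=\k$ in $\mathrm{DCF}_0$), nowhere available in the paper, and you would additionally need every such automorphism to extend to $\cQ_S$ over $\K$ and a linear-disjointness argument to pass from $\widetilde{\k}^{\Aut}=\k$ to $\L^{\Aut}=\K$. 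Second, making $D_0-UFU^{-1}$ invariant under the larger group requires solving the \emph{twisted} cocycle equation $\pi(C_\tau)=C_\tau\,\tau(F)\,C_\tau^{-1}-F$ for all $\tau\in\Aut^{\pd}(\cQ_S/\K)$; this is a nonabelian $H^1$-vanishing statement for a group that is not a $\d$-algebraic group, and ``$\d$-closedness of $\widetilde{\k}$'' gives no direct handle on it (in the untwisted case you used it to integrate a single linear system, which does not apply here since $\tau$ acts nontrivially on the unknown $F$). You should replace this step by the specialization argument, which avoids automorphisms of $\widetilde{\k}/\k$ altogether.
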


\begin{proof}
 The proof of this proposition is the same as the proof of \cite[Proposition~2.10]{DHR} and hence is omitted.
\end{proof}

In what follows, we denote by $N_{G}(H)$ the normalizer of $H$ in $G$. A subgroup of the linear group is called irreducible if the induced representation is irreducible.

\begin{lem}\label{lem normalisateur}
Let $H$ be an irreducible subgroup of $\SL_{n}(\Const)$. Then, 
$$
N_{\GL_{n}(\widetilde{\k})}(H)=\widetilde{\k}^{\times} N_{\SL_{n}(\Const)}(H).
$$ 
\end{lem}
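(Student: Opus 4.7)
The inclusion $\widetilde{\k}^{\times} N_{\SL_{n}(\Const)}(H) \subset N_{\GL_{n}(\widetilde{\k})}(H)$ is immediate: scalar matrices are central in $\GL_{n}(\widetilde{\k})$, and elements of $N_{\SL_{n}(\Const)}(H)$ normalize $H$ by definition. So all the work is in the reverse inclusion. Fix $g \in N_{\GL_{n}(\widetilde{\k})}(H)$; the goal is to factor $g$ as a scalar in $\widetilde{\k}^{\times}$ times an element of $N_{\SL_{n}(\Const)}(H)$.

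The key observation is that conjugation by $g$ preserves the $\Const$-subalgebra $\Mat_{n}(\Const) \subset \Mat_{n}(\widetilde{\k})$. Indeed, since $H$ is an irreducible subgroup of $\GL_{n}(\Const)$ and $\Const$ is algebraically closed, Burnside's theorem (or the Jacobson density theorem) shows that the $\Const$-linear span of $H$ inside $\Mat_{n}(\Const)$ is all of $\Mat_{n}(\Const)$. Since $g H g^{-1} = H \subset \Mat_{n}(\Const)$, it follows by $\Const$-linearity that $g \Mat_{n}(\Const) g^{-1} \subset \Mat_{n}(\Const)$, and symmetrically for $g^{-1}$. Conjugation by $g$ therefore restricts to a $\Const$-algebra automorphism of $\Mat_{n}(\Const)$.

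By the Skolem--Noether theorem, any such automorphism is inner: there exists $s \in \GL_{n}(\Const)$ such that $g M g^{-1} = s M s^{-1}$ for every $M \in \Mat_{n}(\Const)$. Then $s^{-1}g$ centralizes $\Mat_{n}(\Const)$ inside $\Mat_{n}(\widetilde{\k})$; since $\Mat_{n}(\Const)$ contains a $\widetilde{\k}$-basis of $\Mat_{n}(\widetilde{\k})$ (for example the elementary matrices), this centralizer is $\widetilde{\k} \cdot I_{n}$. Hence $g = \lambda s$ for some $\lambda \in \widetilde{\k}^{\times}$.

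It remains to arrange that $s$ lies in $\SL_{n}(\Const)$ and normalizes $H$. Using that $\Const$ is algebraically closed, pick $\mu \in \Const^{\times}$ with $\mu^{n} \det(s) = 1$; then $s' := \mu s \in \SL_{n}(\Const)$, and rewriting $g = (\lambda \mu^{-1}) s'$ gives the desired factorization, provided $s'$ normalizes $H$. But scalars commute with everything, so $s' H s'^{-1} = s H s^{-1}$, and from $g = \lambda s$ together with $g H g^{-1} = H$ we get $s H s^{-1} = H$, hence $s' \in N_{\SL_{n}(\Const)}(H)$. The only conceptually non-trivial step is the application of Burnside to pass from the irreducibility of $H$ to the identity $\Const\text{-span}(H) = \Mat_{n}(\Const)$, which is what forces conjugation by $g$ to descend to $\Mat_{n}(\Const)$; everything else is Skolem--Noether plus a scalar adjustment.
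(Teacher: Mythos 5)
Your proof is correct, but it follows a genuinely different route from the paper's. The paper exploits the differential structure of $\widetilde{\k}$: for $M$ in the normalizer it differentiates the relation $MNM^{-1}\in H\subset\GL_{n}(\widetilde{\k}^{\delta})$ to see that the logarithmic derivative $M^{-1}\delta(M)$ commutes with every element of $H$, invokes Schur's lemma to get $M^{-1}\delta(M)=cI_{n}$, and then notes that all entries of $M$ satisfy the same equation $\delta(y)=cy$, so their pairwise ratios are $\delta$-constants and $M$ is a $\widetilde{\k}^{\times}$-multiple of a matrix with entries in $\Const=\widetilde{\k}^{\delta}$; the final determinant adjustment is the same as yours. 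You replace the differentiation step by Burnside plus Skolem--Noether and never use the derivation at all, so your argument actually proves the stronger, purely algebraic statement that the identity holds for any field extension $\widetilde{\k}/\Const$ with $\Const$ algebraically closed and $H$ irreducible. A side benefit is that your explicit appeal to Burnside (the $\Const$-span of $H$ is all of $\Mat_{n}(\Const)$) is precisely what justifies applying Schur's lemma to an element of $\Mat_{n}(\widetilde{\k})$ when $H$ is a priori only irreducible over the subfield $\Const$ --- a point the paper's proof leaves implicit. The trade-off is that the paper's computation is shorter and stays within the differential-algebraic toolkit used throughout that section, whereas yours invokes two classical structure theorems but is more elementary in the sense of being independent of the $\delta$-structure.
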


\begin{proof}
Let $M \in \GL_{n}(\widetilde{\k})$ be in the normalizer of $H$. Consider $N \in H$. 
We have $MNM^{-1} \in H$. In particular, we have $\delta(MNM^{-1})=0$, {\it i.e.}, ${\delta(M)NM^{-1}-MNM^{-1} \delta(M) M^{-1}=0}$, 
so $M^{-1} \delta(M)$ commutes with $N$. It follows from Schur's lemma that $M^{-1}\delta(M) = c I_{n}$ 
for some $c \in \widetilde{\k}^{\times}$. So, the entries of $M=(m_{i,j})_{1 \leq i,j \leq n}$ are solutions of 
$\delta(y)=c y$. Let $i_{0},j_{0}$ be such that $m_{i_{0},j_{0}} \neq 0$. Then, $M=m_{i_{0},j_{0}} M'$ with 
$M'=\frac{1}{m_{i_{0},j_{0}}} M \in \GL_{n}(\widetilde{\k}^{\delta})=\GL_{n}(\Const)$. Since $\Const$ is algebraically closed, we can
write $M=\lambda M''$ for ${M'' \in \SL_n(\Const)}$ and $\lambda \in \Const^\times$. Hence, the normalizer of $H$ in 
$\GL_{n}(\widetilde{\k})$ is included in $\k^{\times}  N_{\SL_{n}(\Const)}(H)$. It follows that 
$N_{\GL_{n}(\widetilde{\k})}(H) \subset \widetilde{\k}^{\times} N_{\SL_{n}(\Const)}(H)$. 
The other inclusion is obvious. 
\end{proof}
For any  algebraic subgroup $G$ of  $\GL_{n}(\k)$, let $G^{\circ}$ be the neutral component of $G$ and $G^{\circ, der}$ be the derived subgroup of $G^{\circ}$.
We recall that a linear algebraic group $G$ is almost simple if it is  infinite, non-commutative and if every proper normal closed subgroup of $G$ is finite.  
In particular, $G$ is connected. 
Moreover, $G$ equals its derived subgroup $G^{der}$.

\begin{propo}\label{propo:projisomonodgencondition bis}
Assume that the difference Galois group $G$ of $\sq(Y)=AY$ over the $\sq$-field $\K$ satisfies the following property: the algebraic group $G^{\circ,der}$ is an irreducible almost simple algebraic subgroup of $\GL_{n}(\k)$ defined over $\Const$. 
Then, we have the following alternative:
\begin{enumerate}
\item $\Galdelta(\cQ_S/\L)$ is conjugate to a subgroup of $\widetilde{\k}^{\times} N_{\SL_{n}(\Const)}(G^{\circ,der}(\Const))$ containing $G^{\circ,der}(\Const)$; 
\item $\Galdelta(\cQ_S/\L)$ is equal to a subgroup of $G(\widetilde{\k})$ containing $G^{\circ,der}(\widetilde{\k})$.
\end{enumerate}

Furthermore, the first case holds if and only if there exists $B \in \K^{n \times n}$ such that 
\begin{equation}\label{gen integ hypertranalgclos}
\sq(B)A=AB+\delta(A)-\frac{1}{n}\delta(\det(A))\det(A)^{-1}A.
\end{equation}
\end{propo}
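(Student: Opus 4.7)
The plan is to exploit the Cassidy-type dichotomy for Zariski-dense differential algebraic subgroups of almost simple algebraic groups defined over the constants, together with the projective isomonodromy criterion of Proposition \ref{propo: galois param proj const}. First, by Proposition \ref{propo:zarclosurePPvgaloisgroup}, the group $\Galdelta(\cQ_S/\L)$ is Zariski-dense in $\Gal(\cQ_R/\L)$, and a standard constant-field extension argument (using that $\L = \operatorname{Frac}(\widetilde{\k}\otimes_{\k} \K)$ is obtained from $\K$ by extending the $\sq$-constants from $\k$ to $\widetilde{\k}$) shows that $\Gal(\cQ_R/\L)$ may be identified with $G(\widetilde{\k})$. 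Thus $\Galdelta(\cQ_S/\L)$ is a $\delta$-closed subgroup of $G(\widetilde{\k})$ with Zariski closure equal to $G(\widetilde{\k})$, and in particular the derived subgroup $H$ of its neutral component is a $\delta$-closed subgroup of $G^{\circ,der}(\widetilde{\k})$ whose Zariski closure is $G^{\circ,der}(\widetilde{\k})$.

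Next I would apply Cassidy's theorem on Zariski-dense differential algebraic subgroups of almost simple algebraic groups defined over the constants to $H$. This provides the core dichotomy: either $H = G^{\circ,der}(\widetilde{\k})$, or there exists $g \in \GL_{n}(\widetilde{\k})$ such that $gHg^{-1} = G^{\circ,der}(\Const)$. In the first case, $\Galdelta(\cQ_S/\L)$ contains $G^{\circ,der}(\widetilde{\k})$ and is a subgroup of $G(\widetilde{\k})$, so alternative (2) holds. In the second case, after conjugating $\Galdelta(\cQ_S/\L)$ by $g$, one may assume $H = G^{\circ,der}(\Const)$. Since $H$ is characteristic in the neutral component and the neutral component is normal in $\Galdelta(\cQ_S/\L)$, the whole group $\Galdelta(\cQ_S/\L)$ normalizes $G^{\circ,der}(\Const)$ in $\GL_{n}(\widetilde{\k})$. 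Using the irreducibility of $G^{\circ,der}$, Lemma \ref{lem normalisateur} then yields $\Galdelta(\cQ_S/\L) \subset \widetilde{\k}^{\times} N_{\SL_{n}(\Const)}(G^{\circ,der}(\Const))$, and this subgroup contains $G^{\circ,der}(\Const)$ by construction, which is alternative (1).

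For the equivalence with the solvability of \eqref{gen integ hypertranalgclos}, one observes that $N_{\SL_{n}(\Const)}(G^{\circ,der}(\Const)) \subset \SL_{n}(\Const)$, so alternative (1) forces $\Galdelta(\cQ_S/\L)$ to be conjugate to a subgroup of $\widetilde{\k}^{\times} \SL_{n}(\Const)$; Proposition \ref{propo: galois param proj const} then produces $B \in \K^{n \times n}$ satisfying \eqref{gen integ hypertranalgclos}. Conversely, if such a $B$ exists, Proposition \ref{propo: galois param proj const} shows that $\Galdelta(\cQ_S/\L)$ is conjugate to a subgroup of $\widetilde{\k}^{\times} \SL_{n}(\Const)$; then the derived subgroup $H$ lies in $\SL_{n}(\Const)$, which rules out the equality $H = G^{\circ,der}(\widetilde{\k})$, so we necessarily fall in the second branch of the dichotomy and alternative (1) holds.

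The main obstacle will be invoking Cassidy's theorem cleanly under the precise hypothesis that $G^{\circ,der}$ is \emph{almost} simple (not just simple) and checking that the derived subgroup $H$ of $(\Galdelta(\cQ_S/\L))^{\circ}$ inherits the right Zariski-density properties after the constant extension from $\K$ to $\L$. Once these technical points are in place, the remaining steps are bookkeeping combining Lemma \ref{lem normalisateur}, Proposition \ref{propo:zarclosurePPvgaloisgroup} and Proposition \ref{propo: galois param proj const}.
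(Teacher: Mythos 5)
Your proposal follows essentially the same route as the paper: Zariski-density of $\Galdelta(\cQ_S/\L)$ in $\Gal(\cQ_R/\L)=G(\widetilde{\k})$, Cassidy's dichotomy for Zariski-dense $\delta$-subgroups of an almost simple group defined over the constants applied to the derived subgroup, normality plus Lemma \ref{lem normalisateur} to propagate the conclusion to the full group, and Proposition \ref{propo: galois param proj const} for the integrability criterion. The only adjustment needed is that the commutator subgroup of a linear $\delta$-algebraic group need not be Kolchin-closed, so you should work with its Kolchin closure $\Galdelta(\cQ_S/\L)^{\derKol}$ (as the paper does) before invoking Cassidy's theorem; this closure is still normal and Zariski-dense in $G^{\circ,der}(\widetilde{\k})$, so the rest of your argument, including the more detailed treatment of the ``furthermore'' equivalence, goes through unchanged.
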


\begin{proof}
Let $R$ be the $\L$-$\sq$-algebra generated by the entries of a fundamental matrix of solutions $U \in \GL_{n}(\cQ_S) $ and by $\det(U)^{-1}$; this is a PV ring for $\sq(Y)=AY$ over the $\sq$-field $\L$.
Using \cite[Corollary 2.5]{CHS}, we see that ${\Gal(\cQ_R/\L)=G(\widetilde{\k})}$. So, ${\Gal(\cQ_R/\L)^{\circ,der}=G^{\circ,der}(\widetilde{\k})}$. Since $\Galdelta(\cQ_S/\L)$ is Zariski-dense in $\Gal(\cQ_R/\L)$ (see Proposition \ref{propo:zarclosurePPvgaloisgroup}), we have that  $\Galdelta(\cQ_S/\L)^{\circKol, \derKol}$\footnote{This is the Kolchin-closure of the derived subgroup of  $\Galdelta(\cQ_S/\L)^{\circKol}$ where the notation $\circKol$ means that we consider the identity component of the group for the Kolchin topology; see \cite[Section~4.4.1]{DHR}.} is Zariski-dense in the group ${\Gal(\cQ_R/\L)^{\circ ,der}=G^{\circ,der}(\widetilde{\k})}$. By \cite[Theorems 19 and 20]{CassidyCSSDAG},  $\Galdelta(\cQ_S/\L)^{\circKol,\derKol}$ is  either conjugate to $G^{\circ,der}(\Const)$ or equal to $G^{\circ,der}(\widetilde{\k})$. Since $\Galdelta(\cQ_S/\L)^{\circKol,\derKol}$ is a normal subgroup of $\Galdelta(\cQ_S/\L)$, Lemma \ref{lem normalisateur} ensures that $\Galdelta(\cQ_S/\L)$ is either conjugate to a subgroup of $\widetilde{\k}^{\times} N_{\SL_{n}(\Const)}(G^{\circ,der}(\Const))$ containing $G^{\circ,der}(\Const)$ or is equal to a subgroup of $G(\widetilde{\k})$ containing $G^{\circ,der}(\widetilde{\k})$.

The remaining statement is a direct consequence of Proposition \ref{propo: galois param proj const}. 
\end{proof}

\section{Large $(\sq,\d)$-Galois group of $q$-difference equations}\label{sec hyptr}
 In this section, we focus our attention on $q$-difference equations over $\C(z)$.
 Let us consider the field $\Kq$ and the algebraic closure $\Kqbar$ 
of $\Kq$ in (the algebraically closed field)  $\C((z^{*}))=\bigcup_{j=1}^\infty \C((z^{1/j}))$.  Let $q$ be a non zero complex number  such that $|q|\neq 1$. We choose a consistent system $(q_j)_{j \geq 1}$ of roots of $q$; this means that $(q_j)_{j \geq 1}$ is a sequence of complex numbers such that, for all positive integer $j$,  
$q_j^j=q$ and, for all positive integers $j,k,l$, if $j=lk$ then $q_{j}^l=q_{k}$. This allows us to extend the action of $\sq$ to $\Kqbar$ by setting $\sq(f)=f(q_jz^{1/j})$ for $f \in \Kqbar \cap \C((z^{1/j}))$. We have $\Kqbar^{\sq}=\C$. The derivation $\delta=z \frac{d}{dz}$ endows $\Kqbar$ with a structure of $(\sq,\d)$-field. Note also that $\Kq$ is a $(\sq,\d)$-subfield of $\Kqbar$ with $\Kq^{\sq}=\C$.\par 
Let $(\widetilde{\C},\d)$ be a $\delta$-field that contains $(\C,\d)$ and which  is $\delta$-closed. According to Lemma~\ref{lem:extconst},  the $(\sq,\d)$-field 
$$
\Lqbar=\operatorname{Frac}(\widetilde{\C} \otimes_\C \Kqbar)
$$ 
is a $(\sq,\d)$-field extension of $\Kqbar$ such that $\Lqbar^{\sq}=\widetilde{\C}$. 

Consider the  $q$-difference  system
\begin{equation}\label{eq102}
\sq (Y)=AY
\end{equation}
with $A \in \GL_n(\C(z))$. In what follows, we let $S$ be a $(\sq,\d)$-PV ring over $\L$ for the equation (\ref{eq102}), $\cQ_S$ be the total ring of quotients of $S$, and we denote by $\Galdelta(\cQ_S/\L)$ the corresponding $(\sq,\d)$-Galois group over $\L$.

The theorem below shows that if the difference Galois group of a $q$-difference system is large, the same holds for the parametrized difference Galois group.

\begin{theo}\label{theo3}
 Let $G$ be the difference Galois group of the $q$-difference system (\ref{eq102}) over the $\sq$-field $\Kq$. Assume that $G^{\circ, der}$ is an irreducible almost simple algebraic subgroup of $\SL_{n}(\C)$. Then, $\Galdelta(\cQ_S/\Lqbar)$ is a subgroup of $G(\widetilde{\C})$ containing $G^{\circ, der}(\widetilde{\C})$.
\end{theo}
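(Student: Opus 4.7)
The plan is to apply Proposition \ref{propo:projisomonodgencondition bis} with base $(\sq,\delta)$-field $\K=\Kqbar$ and to rule out its first alternative using the classical B\'ezivin-Ramis theorem recalled in the introduction. As a first step, I would identify the difference Galois group of $\sq(Y)=AY$ over $\Kqbar$. Since $\Kq^{\sq}=\Kqbar^{\sq}=\C$, the standard descent argument already used in the proof of Proposition \ref{propo:projisomonodgencondition bis} (via \cite[Corollary~2.5]{CHS}) shows that this Galois group equals the neutral component $G^{\circ}$ of $G$. Because $G^{\circ}$ is connected, $(G^{\circ})^{\circ,der}=G^{\circ,der}$, which by hypothesis is irreducible almost simple in $\SL_{n}(\C)\subseteq \SL_{n}(\Const)$ and hence defined over $\Const$. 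The hypotheses of Proposition \ref{propo:projisomonodgencondition bis} are therefore met.

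The proposition then yields a dichotomy. In its second case, $\Galdelta(\cQ_S/\Lqbar)\subseteq G^{\circ}(\widetilde{\C})\subseteq G(\widetilde{\C})$ contains $G^{\circ,der}(\widetilde{\C})$, which is exactly the conclusion of the theorem. Otherwise, the first case holds, which by the last assertion of the same proposition is equivalent to the existence of a matrix $B\in\Kqbar^{n\times n}$ satisfying the projective isomonodromy relation
\[
\sq(B)A = AB + \delta(A) - \frac{1}{n}\delta(\det A)\det(A)^{-1}A.
\]
The entire task thus reduces to showing that no such $B$ exists.

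To achieve this, I would assume for contradiction that $B$ exists. As in the proof of Proposition \ref{propo: galois param proj const}, setting $B_{1}=B+\frac{1}{n}\delta(\det V)\det(V)^{-1}I_{n}$ one produces a fundamental solution $V$ with $\delta V=B_{1}V$. Passing to the normalized matrix $W=V\det(V)^{-1/n}$ (the $n$-th root exists in $\Kqbar$ since the latter is algebraically closed in $\Kqform$), one finds $\delta W=BW$ with $B\in\Kqbar^{n\times n}$ and $\sq W=\widetilde{A}W$ with $\widetilde{A}=A\det(A)^{-1/n}\in \SL_{n}(\Kqbar)$. Each entry of $W$ is then $\delta$-finite over $\Kqbar$, so any formal Laurent series solution of $\sq Y=AY$ differs from a $\delta$-finite element of $\Kqform$ by a scalar factor and, after the usual cyclic vector construction, satisfies a non-trivial linear differential equation with coefficients in $\Kqbar$. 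This is precisely what the theorem of B\'ezivin \cite{BeBo} and Ramis \cite{R92} forbids: a non-rational formal Laurent series solution of a linear $q$-difference equation cannot satisfy a linear differential equation with rational coefficients. Enforcing this rationality for all $n$ independent solutions then forces the semisimple part of the Galois group, namely $G^{\circ,der}$, to collapse to a finite group, contradicting the hypothesis that it is (in particular) infinite.

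The main obstacle is the last step: one has to handle the $n$-th root $\det(V)^{1/n}$ carefully so that the derived linear differential equation lives over a base field to which the B\'ezivin-Ramis theorem applies, deal with the roots of unity introduced by $\sq$ acting on this root, and translate the rationality of enough solutions into a precise group-theoretic contradiction with the almost simplicity of $G^{\circ,der}$. The remaining steps are essentially formal consequences of Proposition \ref{propo:projisomonodgencondition bis} and standard descent.
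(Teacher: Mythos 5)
Your high-level strategy coincides with the paper's: reduce via Lemma \ref{lemm:preservationgroupbaseextlog} and Proposition \ref{propo:projisomonodgencondition bis} to excluding the projectively isomonodromic case, and then derive a contradiction from the integrability condition together with the B\'ezivin--Ramis rationality theorem. However, the technical core of that exclusion is exactly what you leave unresolved, and the gaps are real. First, your argument presupposes a nonzero formal Laurent series solution of $\sq(Y)=AY$ in $\Kqform$, but such a solution need not exist. The paper has to twist the system: it forms $A''=cz^{r}(\sqrt[n]{\det A})^{-1}A$ and invokes Lemma \ref{lem:convergentformal solution} (slopes of the Newton polygon) to guarantee that the twisted system has a nonzero solution $u$ in $\Kqform$; it then checks that the twist does not change $G^{\circ,der}$ and that $B$ still satisfies the isomonodromy relation for $A''$. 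Without this step there is no series to which Ramis's theorem can be applied. Second, your normalization $W=V\det(V)^{-1/n}$ is not legitimate as stated: $\det(V)$ is a unit of the Picard--Vessiot ring satisfying $\sq(\mathbf{d})=\det(A)\mathbf{d}$, not an element of $\Kqbar$, so its $n$-th root does not live in $\Kqbar$ and adjoining it changes the ring. The paper avoids this by keeping the term $\frac{\d\mathbf{d}}{\mathbf{d}}$ in the integrable $\delta$-system and then proving, via the relation $\sq\bigl(\frac{\d\mathbf{d}}{\mathbf{d}}\bigr)=\frac{\d\mathbf{d}}{\mathbf{d}}+nr$ and a minimality argument, that $\frac{\d\mathbf{d}}{\mathbf{d}}$ is transcendental over $\Lqbar\langle u_{1},\dots,u_{n}\rangle_{\d}$; only then can one strip it from the linear differential equations satisfied by the $u_{i}$.

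Third, the endgame is not the one you describe. The coefficients of the resulting differential equations lie in $\Kqbar$, not $\Kq$, and Ramis's theorem (\cite[Theorem 7.6]{R92}, packaged here as Lemma \ref{sol commune}) is applied only after passing to the $pn$-th tensor power $u^{\otimes pn}$ so that both the $q$-difference equation and the differential equation descend to $\Kq$. The conclusion is then that the entries of $u$ are algebraic over $\Kq$, hence the first column of the fundamental matrix is fixed by the Galois group; this contradicts the \emph{irreducibility} of $G^{\circ,der}$, not its infinitude. Your proposed contradiction (``the semisimple part collapses to a finite group'') is both vaguer and aimed at the wrong hypothesis, since only one column of solutions is known to be a formal Laurent series. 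In short, the skeleton is right, but the three obstacles you flag at the end --- producing a formal solution, controlling the determinant term, and descending to $\Kq$ --- are precisely the substance of the paper's proof, and none of them is dispatched by the arguments you give.
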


Before giving the proof of Theorem \ref{theo3}, we state and prove some preliminary results.

 \begin{lem} \label{lemm:preservationgroupbaseextlog}
Let $G$ be the difference Galois group of \eqref{eq102} over the $\sq$-field $\Kq$. 
Let $H{\subset \GL_n(\widetilde{\C})}$ be the difference Galois group of \eqref{eq102} over the $\sq$-field $\Lqbar$.
Then, {${H^{\circ}(\widetilde{\C})}=G^{\circ}(\widetilde{\C})$}. 
  \end{lem}

\begin{proof}
Since $\Kqbar$ is an algebraic extension of $\Kq$, \cite[Theorem 7]{Ro15} implies that the difference Galois group  $G'$ of \eqref{eq102} over the $\sq$-field $\Kqbar$ has the same connected component as $G$.  By  \cite[Corollary 2.5]{CHS}, the group $H$ is isomorphic to $G'(\widetilde{\C})$. Therefore, the group $H^{\circ}$ is isomorphic to $G'^{\circ}(\widetilde{\C})=G^{\circ}(\widetilde{\C})$.
\end{proof}

\begin{rem}
As a straightforward consequence of Lemma \ref{lemm:preservationgroupbaseextlog}, we obtain that if $G^{\circ, der}$ is an irreducible almost simple algebraic subgroup of $\GL_{n}(\C)$ then ${H}^{\circ,der}$ equals $G^{\circ, der}(\widetilde{\C})$ and is  an irreducible almost simple algebraic subgroup of $\GL_{n}(\widetilde{\C})$.
\end{rem}
\begin{lem}\label{lem:ppvformalsol}
Assume that the system \eqref{eq102}, has a solution $u=(u_{1},\dots,u_{n})^{t}$ with coefficients in $\Kqform$. Then, there exists a
$(\sq,\d)$-PV ring $T$ over $\Lqbar$ of \eqref{eq102} that contains the $\Lqbar$-$\delta$-algebra $\Lqbar\{ u_{1},\dots,u_{n} \}_{\d}$. 
\end{lem}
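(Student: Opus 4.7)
The idea is to construct a $(\sq,\delta)$-PV ring over $\Lqbar$ inside an ambient $(\sq,\delta)$-ring in which the formal solution $u$ is already visible, then descend back to $\Lqbar$. We may assume $u\neq 0$, the case $u=0$ being trivial.

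\emph{Step 1 (large ambient field).} A nonzero Puiseux series invariant under $\sq\colon z\mapsto qz$ with $|q|\neq 1$ has to be a complex constant, so $\Kqform^{\sq}=\C$. The argument of Lemma~\ref{lem:extconst}, applied to $F=\Kqform$, then shows that $\widetilde{\C}\otimes_{\C}\Kqform$ is an integral domain. Let $\widehat{\mathbf{L}}$ denote its fraction field: it is a $(\sq,\delta)$-field extension of $\Lqbar$ with $\widehat{\mathbf{L}}^{\sq}=\widetilde{\C}$, and it contains $u_{1},\dots,u_{n}$.

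\emph{Step 2 (PV ring over $\widehat{\mathbf{L}}$).} By \cite[\S6.2.1]{HS}, fix a $(\sq,\delta)$-PV ring $S$ over $\widehat{\mathbf{L}}$ for the system $\sq(Y)=AY$, with fundamental matrix $V\in\GL_{n}(S)$; then $\cQ_{S}^{\sq}=\widetilde{\C}$.

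\emph{Step 3 (align $V$ with $u$).} Since $u\in \widehat{\mathbf{L}}^{n}\subseteq \cQ_{S}^{n}$ is a solution, $V^{-1}u$ is $\sq$-invariant in $\cQ_{S}^{n}$ and therefore lies in $\widetilde{\C}^{n}$. Completing $V^{-1}u$ to an invertible matrix $P\in \GL_{n}(\widetilde{\C})$ with first column $V^{-1}u$, and replacing $V$ by the fundamental matrix $VP\in\GL_n(S)$, we may assume that the first column of $V$ coincides with $u$.

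\emph{Step 4 (descent to $\Lqbar$).} Set $R_{u}:=\Lqbar\{u_{1},\dots,u_{n}\}_{\delta}$ and
\[
T:=\Lqbar\{V,\det(V)^{-1}\}_{\delta}\subseteq S.
\]
Then $T$ satisfies the first two axioms of a $(\sq,\delta)$-PV ring for \eqref{eq102} over $\Lqbar$, and $R_{u}\subseteq T$ because $u$ is the first column of $V$. If $T$ fails the $(\sq,\delta)$-simplicity axiom, use Zorn's lemma to pick a $(\sq,\delta)$-ideal $\mathfrak{m}\subsetneq T$ that is maximal among those disjoint from $R_{u}\setminus\{0\}$, and replace $T$ by $T/\mathfrak{m}$: the quotient still contains an isomorphic copy of $R_{u}$, which we identify with the original.

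\emph{Main obstacle.} The delicate step is Step~4: showing that the Zorn-maximal ideal $\mathfrak{m}$ is actually a $(\sq,\delta)$-maximal ideal of $T$, so that $T/\mathfrak{m}$ is $(\sq,\delta)$-simple and hence a bona fide PV ring. The argument exploits that $R_{u}\subset\widehat{\mathbf{L}}$ is an integral domain, that $T^{\sq}=\widetilde{\C}$ (forced by $T\subseteq S$ together with $\cQ_{S}^{\sq}=\widetilde{\C}$), and the $(\sq,\delta)$-simplicity of $S$, in order to rule out any proper $(\sq,\delta)$-ideal of $T$ strictly containing $\mathfrak{m}$.
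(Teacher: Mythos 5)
Your Steps 1--3 follow the same route as the paper: extend scalars to a $(\sq,\d)$-field containing the Puiseux solutions (the paper uses the smaller field $\Lqbar\langle u_{1},\dots,u_{n}\rangle_{\d}$ rather than all of $\operatorname{Frac}(\widetilde{\C}\otimes_{\C}\Kqform)$, but this is immaterial), build a $(\sq,\d)$-PV ring there, and normalize the fundamental matrix so that its first column is $u$; your Step 3 even makes explicit the constant change of basis that the paper only asserts. The problem is Step 4, which is exactly where the content of the lemma lies, and which you do not prove. The paper's proof takes $T=\Lqbar\{U,\det(U)^{-1}\}_{\d}\subseteq S_{1}$ and concludes directly that $T$ is a $(\sq,\d)$-PV ring over $\Lqbar$; the point is that $(\sq,\d)$-simplicity of $T$ comes for free from the general theory of \cite{HS} (the results of \cite[\S 6.2.1]{HS} underlying the existence statement: a $\K$-$(\sq,\d)$-algebra generated by a fundamental matrix and the inverse of its determinant, sitting inside a $(\sq,\d)$-ring whose $\sq$-constants are the $\d$-closed field $\widetilde{\C}=\Lqbar^{\sq}$, is automatically $(\sq,\d)$-simple). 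No quotient is needed, and $R_{u}\subseteq T$ is then immediate.

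Your substitute for this step is a genuine gap. First, the paragraph labelled ``Main obstacle'' is a description of what would have to be proved, not a proof: ruling out proper $(\sq,\d)$-ideals of $T/\mathfrak{m}$ is precisely the assertion at stake, and ``the argument exploits\dots'' is not an argument. Second, the Zorn construction itself is suspect: an ideal $\mathfrak{m}$ maximal among $(\sq,\d)$-ideals of $T$ disjoint from $R_{u}\setminus\{0\}$ has no reason to be a maximal $(\sq,\d)$-ideal, so $(\sq,\d)$-simplicity of $T/\mathfrak{m}$ does not follow from the maximality you impose; conversely, a maximal $(\sq,\d)$-ideal of $T$ (which would give a simple quotient) has no reason to meet $R_{u}$ only in $0$. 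To repair the proof you should discard the quotient construction altogether and instead invoke the characterization of $(\sq,\d)$-PV rings inside $(\sq,\d)$-rings with no new $\sq$-constants: since $T\subseteq\cQ_{S}$ and $\cQ_{S}^{\sq}=\widetilde{\C}$ is $\d$-closed, $T$ is already $(\sq,\d)$-simple, hence is the desired PV ring containing $\Lqbar\{u_{1},\dots,u_{n}\}_{\d}$.
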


\begin{proof}
The result is obvious if $u=(0,\dots,0)^{t}$. We shall now assume that ${u \neq (0,\dots,0)^{t}}$.
We equip $\displaystyle\Kqform$ with the structure of $(\sq,\delta)$-field given by $\sq(f(z))=f(qz)$ and $\delta =  z \frac{d}{dz}$. It is easily seen that we have ${\displaystyle\Kqform^{\sq}=\C}$. 
We let $F=\Kqbar\langle u_{1},\dots,u_{n} \rangle_{\d}$ be the $\delta$-subfield of $\displaystyle\Kqform$ generated over $\Kqbar$ by the series $u_{1},\dots,u_{n}$; this is a $(\sq,\delta)$-subfield of $\displaystyle\Kqform$ such that ${F^{\sq}=\C}$. By Lemma~\ref{lem:extconst}, $\widetilde{\C} \otimes_\C F $ is an integral domain and its field of fractions ${\mathbf{L}_{1}=\Lqbar\langle u_{1},\dots,u_{n} \rangle_{\d}}$ is a $(\sq,\delta)$-field such that ${\mathbf{L}_{1}}^{\sq}=\widetilde{\C}$. We consider a total $(\sq,\d)$-PV extension $\cQ_{S_{1}}$ for \eqref{eq102} over $\L_{1}$ and we let $U \in \GL_{n}(\cQ_{S_{1}})$ be a fundamental matrix of solutions of this difference system. We can assume that the first column of $U$ is $u$.  Let $T$ be the  $\Lqbar$-$(\sq,\delta)$-algebra generated by the entries of $U$ and by $\det(U)^{-1}$. Since the total ring of quotient $\cQ_T$ of $T$ is contained in $\cQ_{S_1}$ and $\cQ_{S_1}^{\sq}=\widetilde{\C}$, the $\sq$-constant field of  $\cQ_T$ is $\widetilde{\C}$. By \cite[Proposition 6.17]{HS}, the ring $T$ is is a $(\sq,\d)$-PV ring for \eqref{eq102} over $\Lqbar$ that  contains $\Lqbar\{ u_{1},\dots,u_{n} \}_{\d}$ by construction.
\end{proof}

\begin{lem}\label{sol commune}
Let us consider a vector $u=(u_{1},\dots,u_{n})^{t}$ with coefficients in ${\displaystyle\Kqform}$ which is solution of (\ref{eq102}). Assume moreover that each $u_{i}$ satisfies some nonzero linear differential equation with coefficients in $ \Kq$.  Then, the $u_{i}$ actually belong to $\Kqbar$.  
\end{lem}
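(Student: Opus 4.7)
The strategy is to reduce this lemma to the Bézivin-Ramis theorem recalled at the beginning of the introduction (see \cite{BeBo} and \cite{R92}), which asserts that any Puiseux series satisfying simultaneously a nonzero linear $q$-difference equation and a nonzero linear differential equation with coefficients in $\C(z)$ must be a rational function.

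The first step is to show that each component $u_i$ individually satisfies a nonzero linear $q$-difference equation over $\C(z)$. This is a standard consequence of the fact that $u$ is a solution of a rank-$n$ linear $q$-difference system. Iterating the relation $\sq(u) = Au$ yields $\sq^k(u) = A_k u$ with
\[
A_k = \sq^{k-1}(A) \cdots \sq(A) A \in \GL_n(\C(z)),
\]
so every coordinate $\sq^k(u_i)$ belongs to the $\C(z)$-vector subspace of $\Kqform$ generated by $u_1, \dots, u_n$, whose dimension is at most $n$. Consequently the $n+1$ iterates $u_i, \sq(u_i), \dots, \sq^n(u_i)$ are $\C(z)$-linearly dependent, which provides the required nonzero linear $q$-difference equation satisfied by $u_i$.

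Combining this with the assumption that $u_i$ satisfies a nonzero linear differential equation with coefficients in $\C(z)$, the Bézivin-Ramis theorem applies and forces $u_i \in \C(z)$, as claimed. The only mildly delicate point I foresee is that $u_i$ lives a priori in the Puiseux field $\Kqform$ rather than in $\C((z))$; this is handled either by directly invoking the version of the theorem of \cite{BeBo, R92} valid for Puiseux series, or, if needed, by the standard pullback $w = z^{1/j}$ together with a compatible choice of a $j$-th root of $q$, which transports both the $q$-difference and the differential equation satisfied by $u_i$ into equations with coefficients in $\C(w) \supset \C(z)$ and reduces the statement to the Laurent-series version of Bézivin-Ramis. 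I expect this technical point to be the only real obstacle, but the literature cited in the introduction already provides the theorem in exactly the form needed.
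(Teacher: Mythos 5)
Your proof is correct and rests on the same external input as the paper's --- the B\'ezivin--Ramis rationality theorem for scalar equations, cited as \cite[Theorem 7.6]{R92} --- but your reduction to the scalar case is genuinely different. The paper uses the cyclic vector lemma to find $P \in \GL_{n}(\Kq)$ with $Pu=(f,\sq(f),\dots,\sq^{n-1}(f))^{t}$; then $f$ satisfies an order-$n$ linear $q$-difference equation over $\Kq$, satisfies a nonzero linear differential equation because it is a $\Kq$-linear combination of the $u_{i}$ (each of which satisfies such an equation), hence lies in $\Kq$ by Ramis's theorem, and finally $u=P^{-1}(f,\sq(f),\dots,\sq^{n-1}(f))^{t}$ has entries in $\Kq$. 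You instead observe that every iterate $\sq^{k}(u_{i})$ lies in the $\Kq$-span of $u_{1},\dots,u_{n}$, so that each coordinate individually satisfies a nonzero linear $q$-difference equation, and you apply the scalar theorem coordinate by coordinate; this is more elementary (no cyclic vector lemma, no appeal to the stability of the class of solutions of linear differential equations under linear combinations), at the cost of $n$ applications of the rationality theorem instead of one. On the Puiseux point you raise: the paper does not address it at all (it applies \cite[Theorem 7.6]{R92} directly to an element of $\Kqform$), and your ramification $w=z^{1/j}$, with a compatible root of $q$, only yields $u_{i}\in\C(z^{1/j})$ rather than $u_{i}\in\Kq$, so a further descent step would be needed to reach the conclusion as stated; this residual issue is common to your argument and to the paper's, and in the paper's application of the lemma (in the proof of Theorem \ref{theo3}) only the weaker conclusion $u_{i}\in\Kqbar$ is actually used.
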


\begin{proof}
According to the cyclic vector lemma, there exists $P \in \GL_{n}(\Kq)$ such that $Pu = (f,\sq(f),\dots,\sq^{n-1}(f))^{t}$ for some $f \in \displaystyle\Kqform$, which is a solution of a nonzero linear $q$-difference equation  of order $n$ with coefficients in $\Kq$. Moreover, $f$ satisfies a nonzero linear differential equation with coefficients in $ \Kq$, because it is a $\Kq$-linear combination of the $u_{i}$. Let $j\in  \N^{*}$ such that $f\in \C((z^{1/j}))$. Up to taking a ramification of the variable $z$, it follows from \cite[Theorem 7.6]{R92} that $f$ belongs to $\C(z^{1/j})$. Hence, the entries of $u=P^{-1}(Pu)=P^{-1}(f,\sq(f),\dots,\sq^{n-1}(f))^{t}$ actually belong to $\C(z^{1/j})\subset \Kqbar$ as expected.    
\end{proof}

\begin{proof}[Proof of Theorem \ref{theo3}]
Using Lemma \ref{lemm:preservationgroupbaseextlog} and Proposition \ref{propo:projisomonodgencondition bis}, we 
are reduced to proving that the $(\sq,\d)$-Galois group over the $\pd$-ring $\Lqbar$ of $\sq (Y)=AY$ is not conjugate to 
a subgroup of $\widetilde{\C} \cdot N_{\SL_{n}(\C)}(G^{\circ,der}(\C))$. Suppose to the  contrary that it is conjugate to 
a subgroup of $\widetilde{\C} \cdot N_{\SL_{n}(\C)}(G^{\circ,der}(\C))$. 
Let $\sqrt[n]{\det A}$ be a $n$-th root of $\det A$ in $\Kqbar$. 
We consider ${A'=(\sqrt[n]{\det A})^{-1} A \in \SL_{n}(\Kqbar)}$. Let $\C(\{z\})$ be the field of fraction of the ring of convergent power series $\C\{z\}$.
Since ${\Kqbar \subset \displaystyle\bigcup_{j=1}^{\infty}\C(\{z^{1/j}\})}$, up to taking a ramification of the variable $z$, we can apply Lemma~\ref{lem:convergentformal solution} to the system ${\sq(Y)=A' Y}$ and we get that there exist $c \in \C^{\times}$ and $r \in \Q$ such that ${\sq(Y)=A'' Y}$, with ${A''=cz^{r}A' \in \GL_{n}(\Kqbar)}$,  has a nonzero solution $u=(u_{1},\dots,u_{n})^{t}$ with coefficients in  $ \displaystyle \bigcup_{j=1}^{\infty}\C(\{ z^{1/j}\})\subset \Kqform$. In virtue of Lemma \ref{lem:ppvformalsol}, there exists a $(\sq,\d$)-PV ring $S$ over the $(\sq,\d)$-ring $\Lqbar$ for $\sq(Y)=A'' Y$ containing the entries of $u$. We let $U'' \in \GL_{n}(S)$ be a fundamental matrix of solutions of $\sq(Y)=A'' Y$ whose first column is $u$.\par 
 We claim that  the neutral component of the derived groups of the difference Galois groups of the systems $\sq(Y)=AY$ and $\sq(Y)=A''Y$ over $\L$ coincide and are therefore equal to $G^{\circ,der}(\widetilde{\C})$. Indeed, we first note that  $A''=hA$ for some $h \in \L^\times$ and we let $R$ be a   Picard-Vessiot ring over $\L$ for the system $\sq(Y)=\begin{pmatrix}
A& 0 \\
0& h 
\end{pmatrix}Y$. There exists $U \in \GL_n(R)$ and $v \in R^\times$ such that $\sq(U)=AU$ and $\sq(v)=hv$. Then, $\L[U,\frac{1}{\det(U)}] \subset R$ (resp. $\L[vU,\frac{1}{v^n\det(U)}] \subset R$) is a  Picard-Vessiot ring for $\sq(Y)=AY$ (resp $\sq(Y)=A''Y$) over $\L$. In the representation
attached to $U$ and $vU$, one can easily conclude to the equality of the derived groups, and therefore, the equality of the  neutral component of the derived groups. This proves the claim. \par

 Now, since the $(\sq,\d)$-Galois group  of $\sq (Y)=AY$  over $\L$ is  conjugate to 
a subgroup of $\widetilde{\C} \cdot N_{\SL_{n}(\C)}(G^{\circ,der}(\C))$, Proposition \ref{propo:projisomonodgencondition bis} ensures that there exists ${B \in \Kqbar^{n \times n}}$ such that 
\begin{equation}
\sq(B)A=A B+\d(A)-\frac{1}{n}\d(\det(A))\det(A)^{-1}A.
\end{equation}
An easy computation shows that 
\begin{equation}
\sq(B)A''=A''B+\d(A'')-\frac{1}{n}\d(\det(A''))\det(A'')^{-1}A''.
\end{equation}

Since the determinant $\mathbf{d}=\det (U'')$ satisfies the $q$-difference equation ${\sq(\mathbf{d})=(\det A'') \mathbf{d}=(cz^{r})^{n} \mathbf{d}}$, we obtain the integrability of the system of equations 
$$
\begin{cases}
\sq (Y)=A''Y\\ 
\delta (Y)=  (B+ \frac{\d \mathbf{d}}{n\mathbf{d}}) Y.
\end{cases} 
$$
 So, there exists ${D\in \mathrm{GL}_{n}(\widetilde{\C})}$ such that $V=U''D\in \GL_{n}(S)$ satisfies 
\begin{equation}\label{integ eq}
\begin{cases}
\sq(V)=A''V \\ 
\d(V)= (B+ \frac{\d \mathbf{d}}{n\mathbf{d}}) V.
\end{cases} 
\end{equation}

We recall that $\sq \circ \d=\d\circ\sq$. Note that $\frac{\d \mathbf{d}}{\mathbf{d}} \in S$ is such that 
${\sq\left(\frac{\d \mathbf{d}}{\mathbf{d}}\right)=\frac{\d \mathbf{d}}{\mathbf{d}}+nr}$. 
So, $\Lqbar\{ \frac{\d \mathbf{d}}{\mathbf{d}}\}_{\d} \subset S$ is a $(\sq,\d)$-PV ring over the $(\sq,\d)$-ring $\Lqbar$. The corresponding $(\sq,\d)$-Galois group is Kolchin-connected because it is a $\d$-subgroup of the additive group $\mathbb{G}_a(\widetilde{\C})$ and, hence, according to \cite[Proposition 11]{C72}, it is the vector space of solutions of a linear differential operator. Therefore,  
$\Lqbar\{ \frac{\d \mathbf{d}}{\mathbf{d}}\}_{\d}$ is an integral domain and, hence, we can consider its field of fraction $\Lqbar\langle \frac{\d \mathbf{d}}{\mathbf{d}}\rangle_{\d} \subset \mathcal{Q}_{S}$.  

Note that, since $\sq\left(\frac{\d \mathbf{d}}{\mathbf{d}}\right)=\frac{\d \mathbf{d}}{\mathbf{d}}+nr$, we have $\sq \left(\d \left(\frac{\d \mathbf{d}}{\mathbf{d}}\right)\right)=\d (\frac{\d \mathbf{d}}{\mathbf{d}})$, and therefore, $\delta\left(\frac{\d \mathbf{d}}{\mathbf{d}}\right) \in S^{\sq}=\widetilde{\C}$. Consequently, $\Lqbar\langle \frac{\d \mathbf{d}}{\mathbf{d}}\rangle_{\d} =\Lqbar( \frac{\d \mathbf{d}}{\mathbf{d}})$.

Using (\ref{integ eq}), we get $\d(U'')D+U''\d(D)=\d(U''D)=\d(V) =(B +\frac{\d(\mathbf{d})}{n\mathbf{d}}) U''D$ so $$\d(U'')= \left(B +\frac{\d(\mathbf{d})}{n\mathbf{d}}\right)  U''-U''\d(D)D^{-1}.$$ 
The previous formula implies that the  $\Lqbar( \frac{\d \mathbf{d}}{\mathbf{d}})$-vector subspace of $\mathcal{Q}_{S}$ generated by the entries of $U''$ and all their successive $\d$-derivatives is of finite dimension. In particular, any $u_{i}$ satisfies a nonzero linear $\d$-equation $\mathcal{L}_i(y)=0$ with coefficients in $\Lqbar [\frac{\d \mathbf{d}}{\mathbf{d}}]$. 

We claim that any $u_{i}$ satisfies a nonzero linear $\d$-equation with coefficients in $\Lqbar$.

If $nr=0$, we have 
$\sq\left(\frac{\d \mathbf{d}}{\mathbf{d}}\right)=\frac{\d \mathbf{d}}{\mathbf{d}}+nr=\frac{\d \mathbf{d}}{\mathbf{d}}$,
and therefore $\frac{\d \mathbf{d}}{\mathbf{d}} \in S^{\sq}=\widetilde{\C}$, which proves our claim.

Assume that $nr \neq 0$. The equation $\mathcal{L}_i(y)=0$ can be rewritten as $\sum_{j=0}^{\nu} \mathcal{L}_{i,j}(y) (\frac{\d \mathbf{d}}{\mathbf{d}})^{j}=0$ where the $\mathcal{L}_{i,j}(y)$ are linear $\d$-operators with coefficients in $\Lqbar$, not all zero. 

Let us now prove that $\frac{\d \mathbf{d}}{\mathbf{d}}$ is transcendental over $\Lqbar\langle u_{1},\dots,u_{n} \rangle_{\d}$. Indeed, suppose to the contrary that there is a non zero relation 
\begin{equation}\label{eq min}
\sum_{k=0}^{\kappa} a_{k}\left(\frac{\d \mathbf{d}}{\mathbf{d}}\right)^{k}=0
\end{equation}
with $\kappa \geq 1$ and $a_{0},\ldots,a_{\kappa-1},a_{\kappa}=1 \in \Lqbar\langle u_{1},\dots,u_{n} \rangle_{\d}$. We can and will assume that $\kappa \geq 1$ is minimal. Applying $\sq$ to equation (\ref{eq min}), we get  
\begin{equation} \label{sq eq min}
\sum_{k=0}^{\kappa} \sq (a_{k}) \left(\frac{\d \mathbf{d}}{\mathbf{d}}+nr\right)^{k}=0.
\end{equation}
Since $\kappa$ is minimal and $a_\kappa=\sq(a_\kappa)=1$, the coefficients of any $\left(\frac{\d \mathbf{d}}{\mathbf{d}}\right)^{k}$ in (\ref{eq min}) and (\ref{sq eq min}) are equal. In particular, equating the coefficients of $\left(\frac{\d \mathbf{d}}{\mathbf{d}}\right)^{\kappa-1}$, we get 
$$
a_{\kappa -1}=\sq (a_{\kappa -1})+\kappa nr.
$$ 
Since $a_{\kappa-1} \in \Kqform$, the term of degree $0$ in $a_{\kappa -1}-\sq (a_{\kappa -1})$ is equal to $0$ and, hence, is not equal to $\kappa nr\neq 0$. A contradiction proving that $\frac{\d \mathbf{d}}{\mathbf{d}}$ is transcendental over $\Lqbar\langle u_{1},\dots,u_{n} \rangle_{\d}$.\par 

It follows that $\frac{\d \mathbf{d}}{\mathbf{d}}$ is transcendental over $\Lqbar\langle u_{1},\dots,u_{n} \rangle_{\d}$ and that all the $\mathcal{L}_{i,j}(u_{i})$ are equal to zero. This proves our claim, that is, any $u_{i}$ satisfies some nonzero linear $\d$-equations with coefficients in $\Lqbar$. \par 
Since the $u_{i}$ belong to $\displaystyle \Kqform$, we obtain that any $u_{i}$ satisfies a nonzero linear $\d$-equation with coefficients in $\Kqbar$. Since $\Kqbar$ is an algebraic extension of $\Kq$, we get that any $u_{i}$ satisfies a nonzero linear $\d$-equation with coefficients in $\Kq$. \par 
The vector $u$ is a  solution of $\sq (Y)=A''Y$. Then, letting $p$ be a denominator of $r$ and considering the $pn$-th tensor power of this $q$-difference system, we get that $u^{\otimes pn}$ satisfies a linear $q$-difference equation with coefficients in $\Kq$. Since any $u_{i}$ satisfies a nonzero linear $\d$-equation with coefficients in $\Kq$, we find that $u^{\otimes pn}$ satisfies a nonzero linear $\d$-equation with coefficients in $\Kq$. It follows from Lemma~\ref{sol commune} that the entries of $u^{\otimes pn}$ belong to $\Kqbar$ and, hence, any $u_{i}$ belongs to $\Kqbar$. Therefore, the first column of $U''$ is fixed by the difference Galois group of $\sq(Y)=A'' Y$ over $\L$ and this contradicts the fact that this group contains $G^{\circ, der}(\widetilde{\C})$, which is irreducible by hypothesis.
\end{proof}


\section{Applications}\label{sec applications}


\subsection{User friendly criterias for transcendence}
The goal of this subsection is to use Theorem \ref{theo3}, in order to give transcendence criteria. We refer to  Section $ \ref{sec hyptr}$ for the notations used in this section.

\begin{coro}\label{coro3}
Let $G$ be the difference Galois group of the $q$-difference system (\ref{eq102}) over the $\sq$-field $\Kq$. Let us assume that (\ref{eq102}) admits a non zero vector solution $u=(u_{1},\dots,u_{n})^{t}$ with entries in $\Kqform$.
\begin{itemize}
\item  Assume that $n\geq 2$ and $G^{\circ, der}=\SL_{n}(\C)$.
Then, the series $u_{1},\dots,u_{n}$ are $\d$-algebraically independent over $\C(z)$. In particular, any $u_{i}$ is $\d$-transcendental over $\C(z)$. 
\item Assume that $n\geq 3$ and $G^{\circ, der}=\mathrm{SO}_{n}(\C)$.
Then, the series $u_{1},\dots,u_{n-1}$ are $\d$-algebraically independent over $\C(z)$. 
\item Assume that $n$ is even and $G^{\circ, der}=\mathrm{Sp}_{n}(\C)$.  
Then, the series $u_{1},\dots,u_{n}$ are $\d$-algebraically independent over $\C(z)$. 
\end{itemize}
\end{coro}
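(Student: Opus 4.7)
The plan is to combine Lemma~\ref{lem:ppvformalsol}, Theorem~\ref{theo3} and Proposition~\ref{propo:transcontinu}. First, since $u\neq 0$ by assumption, Lemma~\ref{lem:ppvformalsol} provides a $(\sq,\d)$-PV ring $T$ over $\Lqbar$ for \eqref{eq102} together with a fundamental matrix $U\in\GL_n(T)$ whose first column is $u$. Fixing this $U$ is what lets me invoke Theorem~\ref{theo3} and Proposition~\ref{propo:transcontinu} for one and the same representation $\rho_U$ of $\Galdelta(\cQ_T/\Lqbar)$.

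In each of the three cases of the corollary, $G^{\circ,der}$ equals $\SL_n(\C)$, $\SO_n(\C)$ or $\Sp_n(\C)$ under the stated size restrictions, and each such group is an irreducible almost simple algebraic subgroup of $\SL_n(\C)$. Theorem~\ref{theo3} therefore applies and yields that $\Galdelta(\cQ_T/\Lqbar)$ contains $G^{\circ,der}(\widetilde{\C})$, that is, $\SL_n(\widetilde{\C})$, $\SO_n(\widetilde{\C})$ or $\Sp_n(\widetilde{\C})$ respectively. Up to replacing $U$ by $U\tilde{C}$ for a suitable $\tilde{C}\in\GL_n(\C)$ that puts the orthogonal or symplectic form defining $G^{\circ,der}$ into standard shape (in the $\SL_n$ case $\tilde{C}=I_n$ works, since $\SL_n$ is normal in $\GL_n$), this places the Galois group exactly into the situation required by Proposition~\ref{propo:transcontinu}.

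I would then apply Proposition~\ref{propo:transcontinu} to $u$, viewed as the first column of $U$. In the $\SL_n$ and $\Sp_n$ cases this directly yields $\d$-algebraic independence of $u_1,\dots,u_n$ over $\Lqbar$, and in the $\SO_n$ case it yields $\d$-algebraic independence over $\Lqbar$ of any $n-1$ of the $u_i$'s, in particular of $u_1,\dots,u_{n-1}$. Since $\C(z)\subset\Kqbar\subset\Lqbar$, any nontrivial $\d$-polynomial relation over $\C(z)$ between the $u_i$'s would \emph{a fortiori} be a $\d$-polynomial relation over $\Lqbar$; the independence therefore descends trivially from $\Lqbar$ to $\C(z)$ and the three conclusions of the corollary follow.

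In effect, there is no genuine obstacle left at this stage: the deep content has already been absorbed by Theorem~\ref{theo3} (whose proof rules out the projective isomonodromic alternative of Proposition~\ref{propo:projisomonodgencondition bis} via the B\'ezivin--Ramis-type input of Lemma~\ref{sol commune}) and by Proposition~\ref{propo:transcontinu} (which carries out the torsor-theoretic computation inside the PV ring). The only point that requires any care is the very mild one of selecting the conjugating matrix $\tilde{C}$ so that the classical almost simple group appears in standard form, which is immediate over the algebraically closed field $\C$.
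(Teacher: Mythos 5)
Your proposal is correct and follows essentially the same route as the paper's proof: Lemma~\ref{lem:ppvformalsol} to produce a $(\sq,\d)$-PV ring with $u$ as the first column of a fundamental matrix, Theorem~\ref{theo3} to show the parametrized Galois group contains $G^{\circ,der}(\widetilde{\C})$, and Proposition~\ref{propo:transcontinu} to deduce the $\d$-algebraic independence. Your added remarks on the choice of $\tilde{C}$ and on the descent of independence from $\Lqbar$ to $\C(z)$ are correct and merely make explicit what the paper leaves implicit.
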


\begin{proof} 
 Thanks to Lemma~\ref{lem:ppvformalsol}, there exists a $(\sq,\d)$-PV ring $S$ for the system (\ref{eq102}) over $\Lqbar$ containing $\Lqbar\{u_{1},\dots,u_{n} \}_{\d}$. Let $U \in \GL_{n}(S)$ be a fundamental matrix of solutions of the system (\ref{eq102}) whose first column is $u$. Since $G^{\circ, der}$ is equal to $\mathrm{SO}_{n}(\C)$, (resp. $\SL_{n}(\C)$, resp. $\mathrm{Sp}_{n}(\C)$), with Theorem \ref{theo3}, we find that the $(\sq,\d)$-Galois group of (\ref{eq102}) contains $\mathrm{SO}_{n}(\widetilde{\C})$, (resp. $\mathrm{SL}_{n}(\widetilde{\C})$, resp. $\mathrm{Sp}_{n}(\widetilde{\C})$).
 The results of  Section $ \ref{sec24}$ yield the desired conclusion.
 \end{proof}

Consider now the following $q$-difference equation 
\begin{equation}\label{equa generique user-friendly}
a_{n}(z) y(q^{n}z) + a_{n-1}(z) y(q^{n-1}z) + \cdots + a_{0}(z) y(z) = 0
\end{equation}
for some integer $n \geq 1$, and some $a_{0}(z),\dots,a_{n}(z) \in \C(z)$ with ${a_{0}(z)a_{n}(z)\neq 0}$. 
In what follows, by ``difference Galois group of equation (\ref{equa generique user-friendly})'', we mean the difference Galois group of the associated system 
\begin{equation}\label{syst generique user-friendly}
\sq (Y)=AY, \text{ with } A=\begin{pmatrix}
0&1&0&\cdots&0\\
0&0&1&\ddots&\vdots\\
\vdots&\vdots&\ddots&\ddots&0\\
0&0&\cdots&0&1\\
-\frac{a_{0}}{a_{n}}& -\frac{a_{1}}{a_{n}}&\cdots & \cdots & -\frac{a_{n-1}}{a_{n}}
\end{pmatrix} \in \GL_{n}(\C(z)).
\end{equation}

\begin{coro}\label{coro1}Let $G$ be the difference Galois group of the $q$-difference system (\ref{syst generique user-friendly}) over the $\sq$-field $\Kq$.  Let us assume that (\ref{equa generique user-friendly}) admits a non zero solution $g\in \Kqform$. 
\begin{itemize}
\item   Assume that $n\geq 2$ and $G^{\circ, der}=\SL_{n}(\C)$. 
Then, $g(z), g(qz),\ldots,g(q^{n-1}z)$ are $\d$-algebraically independent over $\C(z)$.
\item Assume that $n\geq 3$ and $G^{\circ, der}=\mathrm{SO}_{n}(\C)$. 
Then, $g(z), g(qz),\ldots,g(q^{n-2}z)$ are $\d$-algebraically independent  over $\C(z)$.
\item Assume that $n$ is even and $G^{\circ, der}=\mathrm{Sp}_{n}(\C)$. 
Then, the series $g(z), g(qz),\ldots,g(q^{n-1}z)$ are $\d$-algebraically independent over $\C(z)$.
\end{itemize}
\end{coro}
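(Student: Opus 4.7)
The strategy is to reduce the scalar equation to its companion system and then invoke Corollary~\ref{coro3} verbatim. Let $g \in \Kqform$ be a nonzero solution of \eqref{equa generique user-friendly} and form the column vector
$$u = \bigl(g(z),\, g(qz),\, \dots,\, g(q^{n-1}z)\bigr)^{t} = \bigl(g,\, \sq(g),\, \dots,\, \sq^{n-1}(g)\bigr)^{t}.$$
Since $g$ satisfies \eqref{equa generique user-friendly}, a direct computation shows that $\sq(u) = Au$ for the companion matrix $A$ appearing in \eqref{syst generique user-friendly}: the last row of this identity is precisely $\sq^{n}(g) = -\sum_{i=0}^{n-1}(a_{i}/a_{n})\sq^{i}(g)$, and the other $n-1$ rows are tautological.

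Moreover, $u$ is nonzero (its first entry is $g \neq 0$) and all its entries lie in $\Kqform$, since $\Kqform$ is stable under $\sq$. Thus $u$ meets the hypotheses of Corollary~\ref{coro3}. Applying that corollary: if $G^{\circ, der} = \SL_{n}(\C)$ with $n \geq 2$, or if $G^{\circ, der} = \Sp_{n}(\C)$ with $n$ even, then all $n$ entries of $u$, that is, $g(z), g(qz), \dots, g(q^{n-1}z)$, are $\delta$-algebraically independent over $\C(z)$; if $G^{\circ, der} = \SO_{n}(\C)$ with $n \geq 3$, then the first $n-1$ entries of $u$, that is, $g(z), g(qz), \dots, g(q^{n-2}z)$, are $\delta$-algebraically independent over $\C(z)$. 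This yields all three announced conclusions.

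\textbf{Main obstacle.} There is essentially no obstacle: the substantive content has already been packaged in Corollary~\ref{coro3} (itself resting on Theorem~\ref{theo3} and Proposition~\ref{propo:transcontinu}). The only book-keeping point is the indexing in the orthogonal case, where one selects the first $n-1$ components of $u$ to recover the shifts $g(z), \dots, g(q^{n-2}z)$; by the sharper statement of Proposition~\ref{propo:transcontinu}, any choice of $n-1$ shifts among $g(q^{i}z)$ with $0 \leq i \leq n-1$ would in fact have been $\delta$-algebraically independent over $\C(z)$.
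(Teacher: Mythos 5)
Your proof is correct and follows exactly the paper's own argument: form the companion vector $u=(g,\sq(g),\dots,\sq^{n-1}(g))^{t}$, observe that it is a nonzero solution of \eqref{syst generique user-friendly} with entries in $\Kqform$, and apply Corollary~\ref{coro3}. The extra remark on the indexing in the orthogonal case is a harmless elaboration of the same route.
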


\begin{proof}
Let us note that if $g(z) \in \displaystyle \Kqform$ is a nonzero solution of (\ref{equa generique user-friendly}), then ${u_{1}=(g(z), g(qz),\ldots,g(q^{n-1}z))^{t}}$ is a nonzero solution of (\ref{syst generique user-friendly}) with entries in $\Kqform$. This is  a direct consequence of Corollary \ref{coro3}.
\end{proof}

\subsection{Generalized Hypergeometric series}\label{sec42}

In this subsection, we follow the notations of \cite{Ro11,Ro12} and we assume that $0< |q|<1$.  Once for all, we fix a determination $\log(q)$ of the logarithm of $q$ and, for all $\alpha\in \C$, we set $q^{\alpha}:=e^{\alpha \log (q)}$. Note that for all $\alpha,\beta\in \C$, we have $q^{\alpha+\beta}=q^{\alpha}q^{\beta}$. Let us fix $n,s\in \N^{*}$, let $\underline{a}=(a_{1},\dots,a_{n})\in (q^{\R})^{n}$, $\underline{b}=(b_{1},\dots,b_{s})\in (q^{\R}\setminus q^{-\N})^{s}$, $\l\in \C^{\times}$, and consider the $q$-difference operator:

\begin{equation}\label{eq3}
z\l \displaystyle \prod_{i=1}^{n} (a_{i}\sq -1)-\displaystyle \prod_{j=1}^{s} \left(\frac{b_{j}}{q}\sq -1\right).
\end{equation} 

When $b_{1}=q$, this operator admits as solution the power series:
$$
\begin{array}{lll}
_{n}\Phi_{s}(\underline{a},\underline{b},\l,q;z)&=&\displaystyle \sum_{m=0}^{\infty}\dfrac{(\underline{a};q)_{m}}{(\underline{b};q)_{m}}\l^{m}z^{m}\\
&=&\displaystyle \sum_{m=0}^{\infty} \frac{\displaystyle \prod_{i=1}^{n} (1-a_{i})(1-a_{i}q)\dots(1-a_{i}q^{m-1})}{\displaystyle \prod_{j=1}^{s}(1-b_{j})(1-b_{j}q)\dots(1-b_{j}q^{m-1})}\l^{m}z^{m} .
\end{array} $$

Until the end of the subsection, let us assume that $s=n \geq 2$ and that ${\underline{a}=(a_{1},\dots,a_{n})\in (q^{\Q})^{n}}$, $\underline{b}=(b_{1},\dots,b_{s})\in (q^{\Q}\setminus q^{-\N})^{s}$.\par 
According to \cite[Propositions 6 and 7]{Ro11}, the operator (\ref{eq3}) is irreducible over $\C(z)$ if and only if, for all  $(i,j)\in \{1,\dots,n\}^{2}$, $a_{i}\not\in b_{j}q^{\Z}$. We say that (\ref{eq3}) is $q$-Kummer induced if it is irreducible, and there exists a divisor $d\neq 1$ of $n$, and two permutations $\mu,\nu$ of $\{1,\dots,n\}$, such that, for all $i\in \{1,\dots,n\}$, $a_{i}\in a_{\mu (i)}q^{1/d}q^{\Z}$, and  $b_{i}\in b_{\nu (i)}q^{1/d}q^{\Z}$. 

\begin{theo}[{\cite[Theorem 6]{Ro11}}]\label{theo1}
 Let us assume that (\ref{eq3}) is irreducible  and not $q$-Kummer induced. Let $G$ be the difference Galois group  of the $q$-difference system (\ref{eq3}) over the $\sq$-field $\Kq$. Then, $G^{\circ, der}$ is either $\SL_{n}(\C)$, $\mathrm{SO}_{n}(\C)$ (only when $n$ is odd), or $\mathrm{Sp}_{n}(\C)$ (only when $n$ is even). Moreover, $G^{\circ, der}$ is $\mathrm{SO}_{n}(\C)$ (resp. $\mathrm{Sp}_{n}(\C)$) if and only if 
 \begin{itemize}
 \item $\prod_{i=1}^{n}a_{i}\in q^{\Z}\prod_{j=1}^{n}b_{j}$;
 \item there exists $c\in \mathbb{C}^{*}$, there exist two permutations $\mu_{1}$, $\mu_{2}$ of $\{1,\dots, n\}$, such that, for all $i,j\in \{1,\dots, n\}$, $ca_{i}a_{\mu_{1}(i)}\in q^{\Z}$, $cb_{j}b_{\mu_{2}(j)}\in q^{\Z}$;
 \item $n$ is odd (resp. even).
 \end{itemize}
\end{theo}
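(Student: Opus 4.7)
The plan is to follow the same strategy that was used by Katz in the differential setting and by Roques in the $q$-difference setting: (i) identify generators of $G^{\circ}$ (or an overgroup) via local data at the singular points of the $q$-hypergeometric operator, (ii) invoke a classification of connected irreducible algebraic subgroups of $\GL_n(\C)$ to pin down $G^{\circ,\mathrm{der}}$, and (iii) use the existence or non-existence of invariant bilinear forms to discriminate between the three candidates $\SL_n$, $\SO_n$, and $\Sp_n$.

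First, I would recall that the singular locus of (\ref{eq3}) lies in $\{0,\infty\} \cup \{a_i^{-1}\lambda^{-1}q^{\Z}\}$ (up to the usual $q$-conventions), and compute explicitly the local exponents at $0$ and at $\infty$ from the operator's Newton polygon and indicial equation. The irreducibility assumption guarantees that the associated $q$-difference module is irreducible, so $G^{\circ,\mathrm{der}}$ acts irreducibly on the standard representation; this is the input that allows us to use classification results. I would then use the fact that, for hypergeometric-type operators, the whole Galois group is generated (in an appropriate Zariski-dense sense) by the formal local Galois groups at $0$ and $\infty$ together with a \emph{$q$-pseudo-monodromy} operator associated with an apparent/physical singularity in between; the eigenvalues of these local generators are explicit products of the $a_i$'s and $b_j$'s and powers of $q$.

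Next comes the group-theoretic core of the argument. Using the classification of closed connected irreducible subgroups $H \subset \SL_n(\C)$, we must rule out:
\begin{itemize}
\item imprimitive cases (tensor-induced or wreath-product subgroups), which correspond exactly to the $q$-Kummer induced operators and are excluded by hypothesis;
\item tensor decompositions $H \subset H_1 \otimes H_2$, which, via the multiplicative structure of the eigenvalues of a generator, would force relations of the type ``$a_i b_j^{-1}$ factorizes'', again amounting to Kummer induction;
\item exceptional groups in their minuscule representations, which are excluded by a dimension/rank count against the explicit form of the local exponents.
\end{itemize}
This pigeonholes $G^{\circ,\mathrm{der}}$ into $\SL_n$, $\SO_n$, or $\Sp_n$. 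Finally, $G^{\circ,\mathrm{der}}$ leaves a non-degenerate bilinear form invariant iff the standard representation is self-dual, iff the dual operator is gauge equivalent to the original one up to a scalar $q$-difference factor; this translates into the condition $\prod a_i \in q^{\Z}\prod b_j$ and the existence of permutations $\mu_1,\mu_2$ matching $a_i$ with $a_{\mu_1(i)}^{-1}$ (and similarly for $b_j$) up to a common scalar $c$. Distinguishing symmetric versus alternating invariant form is then done by computing the determinant of the Gram matrix on a cyclic basis, or equivalently by examining the sign of a Casimir; this computation shows the form is symmetric precisely when $n$ is odd and alternating precisely when $n$ is even, giving $\SO_n$ (resp.\ $\Sp_n$) in the stated parities.

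The main obstacle I would expect is the step that excludes all tensor-induced and exceptional candidates solely from the hypothesis ``not $q$-Kummer induced''. Concretely, one needs a clean dictionary between imprimitivity of the representation of $G^{\circ,\mathrm{der}}$ and the combinatorial condition on the parameters in the definition of Kummer induction: the implication ``$q$-Kummer induced $\Rightarrow$ imprimitive'' is direct, but the converse requires identifying the rational characters of a would-be block decomposition with explicit shifts of the $a_i$'s and $b_j$'s by $d$-th roots of $q$, which is the delicate technical heart of \cite{Ro11}. Once this is secured, the rest is a rather mechanical reduction to invariant-theoretic computations.
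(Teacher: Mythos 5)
First, a point of order: the paper does not prove this statement at all. Theorem \ref{theo1} is imported verbatim as \cite[Theorem 6]{Ro11}, so there is no internal proof to compare yours against; the honest benchmark is the argument in \cite{Ro11} itself. Measured against that, your proposal is a reasonable reconstruction of the overall strategy (local data at $0$ and $\infty$, Zariski-density of a set of explicit generators, classification of connected irreducible subgroups, self-duality and the type of the invariant form), but as written it is an outline rather than a proof, and the places where it is thin are exactly the places where the real work lies.

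Concretely, three gaps. (a) Your generation step appeals to ``formal local Galois groups at $0$ and $\infty$ together with a $q$-pseudo-monodromy operator''; in the regular singular case $s=n$ relevant here the correct tool is Sauloy's density theorem, in which the third family of generators consists of values of the twisted Birkhoff connection matrix (with elliptic entries), and one must actually verify its hypotheses for the hypergeometric system --- this is not a formal consequence of irreducibility. (b) The group-theoretic core --- ruling out imprimitive, tensor-decomposable and exceptional irreducible subgroups using only ``irreducible and not $q$-Kummer induced'' --- is precisely what you yourself flag as ``the delicate technical heart''; naming the dictionary between imprimitivity and the Kummer-induction condition is not the same as establishing it, and without it the trichotomy $\SL_n$, $\SO_n$, $\Sp_n$ does not follow. (c) The parity statement is asserted, not proved: for $n$ odd the invariant form is symmetric for the trivial reason that odd-dimensional spaces carry no nondegenerate alternating form, but for $n$ even you must positively exclude the orthogonal case, and ``computing the determinant of the Gram matrix on a cyclic basis'' is a placeholder for a computation you have not done. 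So the proposal identifies the right road map but does not close any of the steps that make the theorem nontrivial.
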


Theorem \ref{theo1} and Corollary \ref{coro1} yield the following result.

\begin{coro}
Let us assume that (\ref{eq3}) is irreducible  and not $q$-Kummer induced. Let $G$ be the difference Galois group  of the $q$-difference system (\ref{eq3}) over the $\sq$-field $\Kq$ and let $G^{\d}$, be the $\d$-Galois group of the $q$-difference system (\ref{eq3}) over the field  $\L$. 
\begin{itemize}
\item Assume that $G^{\circ, der}=\SL_{n}(\C)$ (resp.  that $n$ is odd and ${G^{\circ, der}=\mathrm{SO}_{n}(\C)}$, resp. that $n$ is even and $G^{\circ, der}=\mathrm{Sp}_{n}(\C)$). 
Then, $G^{\d}$ contains  $\SL_{n}(\widetilde{\C})$, (resp.  $\mathrm{SO}_{n}(\widetilde{\C})$, resp. $\mathrm{Sp}_{n}(\widetilde{\C})$). 
\item If we further assume that $b_{1}=q$, then we obtain that the series $_{n}\Phi_{n}(\underline{a},\underline{b},\l,q;z),\dots, \sq^{\kappa}\left( _{n}\Phi_{n}(\underline{a},\underline{b},\l,q;z) \right)$  with $\kappa =n-1$ (resp. ${\kappa =n-2}$, resp. $\kappa =n-1$) are $\d$-algebraically independent over $\C(z)$.
\end{itemize}
\end{coro}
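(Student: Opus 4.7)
The strategy is to observe that the corollary is a direct combination of Theorem~\ref{theo1}, Theorem~\ref{theo3} and Corollary~\ref{coro1}, so all I really need to verify is that the hypotheses of the two ``application'' results are met in each of the three cases singled out by Theorem~\ref{theo1}.

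For the first bullet, I start from the standing assumptions that \eqref{eq3} is irreducible and not $q$-Kummer induced, which by Theorem~\ref{theo1} forces $G^{\circ,der}$ to be one of $\SL_n(\C)$ (with $n\geq 2$), $\mathrm{SO}_n(\C)$ (with $n$ odd, hence $n\geq 3$) or $\mathrm{Sp}_n(\C)$ (with $n$ even). Each of these classical groups is, in its natural $n$-dimensional representation, an irreducible almost simple linear algebraic subgroup of $\SL_n(\C)$: they are all connected, infinite, non-commutative, equal to their own derived subgroup, and have only finite proper normal closed subgroups (the centers). Thus the hypotheses of Theorem~\ref{theo3} are satisfied, and that theorem yields directly that $G^{\delta}=\Galdelta(\cQ_S/\Lqbar)$ is a subgroup of $G(\widetilde{\C})$ containing $G^{\circ,der}(\widetilde{\C})$. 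Specializing to each of the three possibilities gives the three inclusions claimed.

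For the second bullet, I use that when $b_1=q$ the series $g(z):={}_n\Phi_n(\underline{a},\underline{b},\l,q;z)$ is explicitly a solution of the $q$-difference operator \eqref{eq3}, as recalled in the excerpt; the hypothesis $0<|q|<1$ together with the definition shows that it lies in $\C[[z]]\subset\Kqform$ and is nonzero. Consequently the companion system \eqref{syst generique user-friendly} attached to \eqref{eq3} admits the nonzero formal solution $(g(z),g(qz),\dots,g(q^{n-1}z))^{t}\in\Kqform^{n}$. Since $G^{\circ,der}$ lies in the list $\{\SL_n(\C),\mathrm{SO}_n(\C),\mathrm{Sp}_n(\C)\}$ exactly as required by Corollary~\ref{coro1}, I can invoke that corollary with $g$ as above: in the $\SL_n$ and $\mathrm{Sp}_n$ cases it gives the $\d$-algebraic independence of $g(z),g(qz),\dots,g(q^{n-1}z)$ over $\C(z)$ (so $\kappa=n-1$), while in the $\mathrm{SO}_n$ case it gives only the $\d$-algebraic independence of $g(z),g(qz),\dots,g(q^{n-2}z)$ (so $\kappa=n-2$). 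Rewriting $g(q^{k}z)=\sq^{k}(g)(z)$ yields the statement exactly as formulated. No step should pose a genuine obstacle; the only thing to be careful about is matching the dropped coordinate in the orthogonal case with the index $\kappa=n-2$ of the statement, which is built into Corollary~\ref{coro1} and Proposition~\ref{propo:transcontinu}.
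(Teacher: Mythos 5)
Your proposal is correct and follows essentially the same route as the paper, which likewise deduces the first bullet from Theorems \ref{theo1} and \ref{theo3} and the second from Corollary \ref{coro1} applied to the nonzero formal solution ${}_{n}\Phi_{n}$. Your added verification that $\SL_n(\C)$, $\mathrm{SO}_n(\C)$ and $\mathrm{Sp}_n(\C)$ are irreducible almost simple subgroups of $\SL_n(\C)$ is exactly the (implicit) checking the paper relies on.
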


\begin{proof}
The first point is a straightforward consequence of Theorems \ref{theo3}, and~\ref{theo1}. We conclude with Corollary \ref{coro1}. 
\end{proof}

\subsection{Irregular generalized Hypergeometric functions}\label{sec43}

In this subsection, we assume that  $n>s$, $n\geq 2$. Let $\underline{a}=(a_{1},\dots,a_{n})\in (q^{\R})^{n}$, $\underline{b}=(b_{1},\dots,b_{s})\in (q^{\R}\setminus q^{-\N})^{s}$, $\l\in \C^{\times}$, $0<|q|<1$.

\begin{theo}[{\cite[Page 1]{Ro12}}]\label{theo2}
Let $G$ be the difference Galois group  of the $q$-difference system (\ref{eq3}) over the $\sq$-field $\Kq$. 
For $(i,j)\in \{1,\dots,n\}\times  \{1,\dots,s\}$, let $\a_{i},\b_{j}\in \R$ such that $a_{i}=q^{\a_{i}}$ and $b_{i}=q^{\b_{j}}$. Assume that for all $(i,j)\in \{1,\dots,n\}\times  \{1,\dots,s\}$, $\a_{i}-\b_{j}\notin \Z$, and that the algebraic group generated by $\mathrm{Diag}(e^{2i\pi \a_{1}},\dots,e^{2i\pi \a_{n}})$ is connected.  Then, $G=\mathrm{GL}_{n}(\C)$.
\end{theo}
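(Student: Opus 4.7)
The plan is to combine explicit local $q$-difference data at the singularities $z=0$ and $z=\infty$, invoking the Ramis-Sauloy-van der Put density-type theorem for $q$-difference Galois groups, which realises $G$ as the Zariski closure of the subgroup of $\GL_n(\C)$ generated by the local Galois groups at the singular points, conjugated via Birkhoff connection matrices.

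At $z=0$ the operator is Fuchsian. The non-resonance $\a_i-\b_j\notin\Z$, together with the hypothesis $b_j\in q^{\R}\setminus q^{-\N}$ (which ensures that the $\b_j$'s are suitably generic), implies that the local Galois group at $0$ is commutative and, up to conjugation, coincides with the algebraic group generated by the formal monodromy $D=\mathrm{Diag}(e^{2i\pi\a_1},\dots,e^{2i\pi\a_n})$. By hypothesis, the Zariski closure of $\langle D\rangle$ is a connected subtorus $T$ of the diagonal torus, and $T\subseteq G$.

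At $z=\infty$ the condition $n>s$ forces a non-zero slope in the Newton polygon, which gives rise to a $q$-exponential torus of positive dimension attached to the characters at infinity, together with $q$-Stokes matrices coming from the $q$-Borel-Laplace resummation between Stokes directions. All of these contribute non-trivial elements to $G$ after transport by the Birkhoff matrix. In particular, the $q$-exponential torus ensures that the determinant character of $G$ is non-trivial, so $G\not\subseteq\SL_n(\C)$; and the $q$-Stokes matrices contribute non-trivial unipotent elements of $G$.

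To conclude, one must argue that a Zariski-closed subgroup of $\GL_n(\C)$ containing the connected torus $T$ (coming from $0$), a non-split exponential torus, non-trivial unipotents in sufficiently generic position (both coming from $\infty$), and having surjective determinant character, must equal $\GL_n(\C)$. The main obstacle is verifying that the $q$-Stokes unipotents are generic enough to rule out proper algebraic subgroups preserving a tensor decomposition, a non-degenerate bilinear form, or some other classical structure. In practice this requires an explicit computation of the $q$-Stokes matrices for the hypergeometric operator, e.g.\ via a Jackson integral representation and the $q$-analogue of the Mellin-Barnes integrals, showing that their entries do not lie on the subvarieties cut out by the invariants of any such proper subgroup; this is the most technical part of the proof.
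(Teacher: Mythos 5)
First, note what you are being measured against: the paper does not prove Theorem \ref{theo2} at all. It is imported from \cite{Ro12} with the single sentence ``The following result is proved in \cite{Ro12}'', so your argument has to stand on its own as a proof of that external result. Your general strategy -- a Ramis--Sauloy-type density theorem, the Zariski closure of the group generated by the local Galois data at the two singular points, the $q$-exponential torus and $q$-Stokes operators produced by the non-integral slope that appears because $n>s$ -- is indeed the right kind of machinery, and it is essentially the machinery used in the cited reference. The problem is that the decisive step is missing. You stop exactly where the theorem begins: the list of elements you exhibit in $G$ (a connected subtorus of the diagonal torus, a surjective determinant character, a positive-dimensional exponential torus, ``non-trivial unipotents in sufficiently generic position'') does \emph{not} force $G=\GL_{n}(\C)$. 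All of these are compatible with a proper parabolic subgroup, with the normalizer of a Levi, or with $\C^{\times}\cdot\Sp_{n}(\C)$. The entire content of the statement is the group-theoretic elimination of such subgroups: one must prove irreducibility (this is where $\a_{i}-\b_{j}\notin\Z$ enters), primitivity (this is where the connectedness of $\overline{\langle D\rangle}$ enters), and the exclusion of the self-dual and other large irreducible subgroups, typically by exploiting that the torus coming from the slope decomposition acts with enough distinct weights. You explicitly defer all of this as ``the most technical part'' and give no argument, so what you have is a plausible roadmap, not a proof.

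There are also two concrete inaccuracies in the local analysis. With the operator normalized as in \eqref{eq3} and $n>s$, the coefficient of $\sq^{k}$ has $z$-adic valuation $1$ for $k>s$ and $0$ for $k\le s$, so the Newton polygon at $z=0$ has the two slopes $0$ and $1/(n-s)$: the \emph{irregular} point is $z=0$, while $z=\infty$ is the Fuchsian point, whose exponents are the $q^{-\a_{i}}$. You have the two points interchanged; the structure of the argument survives the swap, but the condition $\a_{i}-\b_{j}\notin\Z$ is a relation between exponents at \emph{different} points (it is the irreducibility condition), not a local non-resonance condition. In particular, nothing in the hypotheses forbids $\a_{i}-\a_{i'}\in\Z$ for $i\neq i'$, so your claim that the local Galois group at the Fuchsian point is commutative and equal to $\overline{\langle D\rangle}$ is unjustified: resonances can produce a unipotent part. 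That extra unipotent part is harmless for a lower bound on $G$, but the claim as stated is wrong, and it signals that the local data feeding into the (missing) final group-theoretic argument have not been pinned down correctly.
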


\begin{coro}
Let  $G^{\d}$, be the $\d$-Galois group of the $q$-difference system (\ref{eq3}) over the field  $\L$. Assume that for all ${(i,j)\in \{1,\dots,n\}\times  \{1,\dots,s\}}$, we have $\a_{i}-\b_{j}\notin \Z$, and that the algebraic group generated by $\mathrm{Diag}(e^{2i\pi \a_{1}},\dots,e^{2i\pi \a_{n}})$ is connected. Then, ${G^{\d}=\GL_{n}(\widetilde{\C})}$. Furthermore, if $b_{1}=q$, then the series $_{n}\Phi_{s}(\underline{a},\underline{b},\l,q;z),\dots, \sq^{n-1}\left( _{n}\Phi_{s}(\underline{a},\underline{b},\l,q;z) \right)$ are $\d$-algebraically independent over $\C(z)$.
\end{coro}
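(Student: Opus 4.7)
The plan is to mirror the proof of the corollary of Section~\ref{sec42}, substituting Theorem~\ref{theo2} for Theorem~\ref{theo1} to identify the non-parametrized difference Galois group. Under the stated hypotheses Theorem~\ref{theo2} gives $G=\GL_{n}(\C)$; since $\GL_{n}(\C)$ is connected we have $G^{\circ}=G$, and therefore $G^{\circ,der}=\SL_{n}(\C)$, which is an irreducible almost simple subgroup of $\SL_{n}(\C)$ thanks to $n\geq 2$. The hypotheses of Theorem~\ref{theo3} are thus fulfilled.

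Applying Theorem~\ref{theo3} yields the chain $\SL_{n}(\widetilde{\C})\subseteq G^{\d} \subseteq \GL_{n}(\widetilde{\C})$. To upgrade the left inclusion to the equality $G^{\d}=\GL_{n}(\widetilde{\C})$, I would argue via the quotient $\GL_{n}/\SL_{n}\cong \mathbb{G}_{m}$: the image of $G^{\d}$ under $\det$ is a Kolchin-closed subgroup of $\mathbb{G}_{m}(\widetilde{\C})$ which is Zariski-dense in $\mathbb{G}_{m}(\widetilde{\C})$ by Proposition~\ref{propo:zarclosurePPvgaloisgroup}, and which coincides with the $\pd$-Galois group of the order-one equation $\sq(\mathbf{d})=\det(A)\,\mathbf{d}$ satisfied by $\mathbf{d}=\det U$. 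The main obstacle is to show that this image exhausts $\mathbb{G}_{m}(\widetilde{\C})$, since proper Zariski-dense Kolchin-closed subgroups of $\mathbb{G}_{m}(\widetilde{\C})$ do exist (for instance the $\d$-constants $(\widetilde{\C}^{\d})^{\times}$); one must exploit the explicit form of $\det A$ in the irregular hypergeometric case to rule out any telescoping relation of the shape $\delta(\det A)/\det A = \sq(g)-g + c$ with $g\in\Kqbar$ and $c\in \widetilde{\C}$, which by the Hardouin--Singer criterion for rank one is the only way to produce a proper $\d$-subgroup.

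For the second assertion, assume $b_{1}=q$. Then $f(z):={}_{n}\Phi_{s}(\underline{a},\underline{b},\l,q;z)\in \C[[z]]\subset \Kqform$ is a nonzero solution of the scalar hypergeometric equation, and hence $(f(z),f(qz),\dots,f(q^{n-1}z))^{t}$ is a nonzero formal solution of the associated companion system. Applying the $\SL_{n}$-case of Corollary~\ref{coro1}, which relies only on the inclusion $\SL_{n}(\widetilde{\C})\subseteq G^{\d}$ already established, yields the $\d$-algebraic independence of $f(z),f(qz),\dots,f(q^{n-1}z)$, equivalently of ${}_{n}\Phi_{s},\sq({}_{n}\Phi_{s}),\dots,\sq^{n-1}({}_{n}\Phi_{s})$, over $\C(z)$.
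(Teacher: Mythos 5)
Your proposal follows essentially the same route as the paper: Theorems \ref{theo2} and \ref{theo3} give $\SL_{n}(\widetilde{\C})\subseteq G^{\d}$, the remaining $\mathbb{G}_m$-factor is controlled by the $\d$-Galois group of the rank-one determinant equation via the Hardouin--Singer criterion \cite[Corollary 3.4]{HS}, and the final assertion is exactly Corollary \ref{coro1} applied to the companion system. The one step you flag as ``the main obstacle'' but do not carry out --- ruling out a telescoping relation for $\det(A)$ --- is precisely the computation the paper dismisses as ``easily seen'', and it does go through: $\det(A)=\frac{(-1)^{n}z\l+(-1)^{s+1}}{z\l\prod_{i=1}^{n}a_{i}}$ has a simple zero at a point of $\C^{\times}$ and no other zero or pole on $\C^{\times}$, so its $q$-divisor (in the sense of Lemma \ref{lem qdivisor}) is nonzero and no identity $\det(A)=cz^{m}\sq(f)/f$ with $f\in\C(z)^{\times}$ can hold.
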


\begin{proof}
Theorems \ref{theo3} and \ref{theo2} ensure that 
$G^{\d}$ contains $\SL_{n}(\widetilde{\C})$. So, the group $G^{\d}$ is equal to $G_{M}\SL_{n}(\widetilde{\C})$, where $G_{M}\subset \widetilde{\C}^{\times}$ is the $\d$-Galois group of the $q$-difference equation $\sq y =\det(A)y=\frac{(-1)^{n}z\l+(-1)^{s+1}}{z\l \prod_{i=1}^{n} a_{i}} y$, and $A$ is the matrix associated to \eqref{eq3}. It is easily seen that there do not exist $c\in\C^{\times}$, $m\in \Z$, and $f \in \C(z)^{\times}$ such that $\det(A)=cz^{-1}\frac{\sq(f)}{f}$. By  \cite[Corollary 3.4]{HS}, we deduce that $G_{M}=\widetilde{\C}^{\times}$ and then $G^{\d}=\GL_{n}(\widetilde{\C})$. We conclude with Corollary~\ref{coro1}. 
\end{proof}

\part{$\qpr$-difference relations for solutions of $q$-difference equations }

\section{Parametrized difference Galois theory}\label{sec:parampvdiscr}

\subsection{Difference  algebra}\label{sec51}

We refer to \cite{OvWib} for more details on what follows. By a $\ssp$-ring, we mean a ring equipped with two commuting endomorphisms $\sq$ and $\sp$, such that $\sq$ is an automorphism. We do not make any  assumption on $\sp$. The definition of $\ssp$-fields, $\K$-$\ssp$-algebras for $\K$ a $\ssp$-field  and $\ssp$-ideals are straightforward.\par   
We say that a $\K$-$\ssp$-algebra $R$ is $\sp$-finitely generated if there exist $a_1,\dots,a_n$ such that $R$ is generated as $\K$-algebra by the $a_i$'s and their transforms via $\sp$. We then  write  $R =\K\{a_1,\dots,a_n \}_{\sp}$. 
We say that a $\K$-$\ssp$-field extension $R$ is $\sp$-finitely generated if there exist $a_1,\dots,a_n$ such that $R$ is generated as $\K$-field extension by the $a_i$'s and their transforms via $\sp$. We then write $R=\K\langle a_1,\dots,a_n \rangle_{\sp}$. \par 
 Let $(\k,\sp)$ be a difference field. Let $R$ be a $\k$-$\sp$-algebra. If $R$ is a field, we say that $R$ is inversive if $\sp$ is surjective on $R$. We call $R$  $\sp$-separable if $\sp$ is injective on $R \otimes_{\k} \widetilde{\k}$ for every $\sp$-field extension  $\widetilde{\k}/\k$.\par 
 The ring of $\sp$-polynomials in the differential indeterminates $y_1,\ldots,y_n$ and with coefficients in $(\k,\sp)$,  denoted by $\k\{y_1,\ldots,y_n\}_{\sp}$, is the ring of polynomials in the indeterminates $\{\sp^j y_i\:|\: j \in \N, 1\le i\le n\}$ with coefficients in $\k$. Let $R$ be a $\K$-$\sp$-algebra and let $a_1, \dots,a_n \in R$. If there exists a nonzero $\sp$-polynomial $P \in \K\{y_1,\ldots,y_n\}_{\sp}$ such that 
$P(a_1,\dots,a_n)=0$, then we say that  $a_1,\dots,a_n$ are 
$\sp$-algebraically dependent over $\K$. Otherwise, we say that $a_1,\dots,a_n$ are $\sp$-transcendental over $\K$, or $\sp$-algebraically independent over $\K$. Following \cite[A.V.141]{Bourbaki}, we say that a zero characteristic field extension $\widetilde{\k}|\k$ is a regular field extension if $\k$ is relatively algebraically closed in $\widetilde{\k}$.\par 

We would like to  prove some lemmas about the extension of constants. 
\begin{lem}\label{lem:extconst2}
  Let $F$ be a $\ssp$-field and let  $\k=F^{\sq}$ be the field of $\sq$-constants of $F$.{ We assume that $\k$ is an inversive $\sp$-field.}  Let $\widetilde{\k}$ be a regular $\sp$-field extension of  $\k$ considered as a field of $\sq$-constants. Then, the ring $\widetilde{\k} \otimes_\k F$ is an integral domain whose
  fraction field $\widetilde{F}$ is a $\ssp$-field extension of $F$ such that $\widetilde{F}^{\sq}=\widetilde{\k}$.

  \end{lem}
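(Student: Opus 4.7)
The plan is to establish four successive points: integrity of $R := \widetilde{\k} \otimes_\k F$, extension of $\sq$ and $\sp$ to $R$ and its fraction field, $R^\sq = \widetilde{\k}$, and finally $\widetilde{F}^\sq = \widetilde{\k}$, which will be the main obstacle. The first three are essentially routine; the last is the heart of the proof and would be handled by a reduction to the purely transcendental case followed by a descent.

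First, the standard characterization of regular extensions (Bourbaki, Alg., Ch.\ V, \S 17) says that $\widetilde{\k}/\k$ regular implies $\widetilde{\k}\otimes_\k E$ is an integral domain for every field extension $E/\k$; applied with $E = F$ this yields integrity of $R$, and $\widetilde{F} := \mathrm{Frac}(R)$ is well defined. Next, I would define $\sq(\lambda \otimes f) := \lambda \otimes \sq(f)$ and $\sp(\lambda \otimes f) := \sp(\lambda) \otimes \sp(f)$. Both formulas are compatible with the balanced tensor relations ($\sq$ because it fixes $\k$ pointwise, and $\sp$ because $\sp|_{\widetilde{\k}}$ and $\sp|_F$ coincide on $\k$). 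The map $\sq$ is an automorphism of $R$ at once. For $\sp$, surjectivity is immediate from surjectivity of $\sp$ on each factor; injectivity is exactly where the hypothesis that $\k$ is inversive enters. Picking a $\k$-basis $(g_\alpha)$ of $F$ (so that $R$ is a free $\widetilde{\k}$-module with basis $(1 \otimes g_\alpha)$), inversivity combined with injectivity of $\sp$ on $F$ shows that $(\sp(g_\alpha))$ is again $\k$-linearly independent in $F$: any relation $\sum c_\alpha \sp(g_\alpha) = 0$ rewrites as $\sp(\sum d_\alpha g_\alpha) = 0$ with $\sp(d_\alpha) = c_\alpha$, forcing each $d_\alpha = 0$. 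Hence $(1 \otimes \sp(g_\alpha))$ is $\widetilde{\k}$-linearly independent in $R$, and the $\sp|_{\widetilde{\k}}$-semilinearity of $\sp$ on $R$ gives its injectivity. Both automorphisms extend uniquely to $\widetilde{F}$ and still commute there.

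For $R^\sq$, in the basis $(1 \otimes g_\alpha)$ the operator $\sq$ is $\widetilde{\k}$-linear with matrix in $\k$ (the matrix of $\sq$ acting on $F$ in the basis $(g_\alpha)$), so the $\sq$-fixed subspace of $R$ is the base change to $\widetilde{\k}$ of the $\sq$-fixed subspace of $F = \bigoplus_\alpha \k g_\alpha$, namely $F^\sq = \k$. Picking $g_{\alpha_0} = 1$, this gives $R^\sq = \widetilde{\k}\cdot(1 \otimes 1) = \widetilde{\k}$.

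The last step, $\widetilde{F}^\sq = \widetilde{\k}$, is the main obstacle. It suffices to show $\widetilde{F}^\sq \subseteq R$, because the preceding step then concludes. I would first treat the purely transcendental case $\widetilde{\k} = \k(t_1, \dots, t_n)$, in which $\widetilde{F}$ identifies with $F(t_1, \dots, t_n)$: a $\sq$-invariant fraction $\xi = P/Q$ with $P, Q \in F[t_1, \dots, t_n]$ coprime forces, via the preservation of coprimality and of degree under $\sq$, the existence of $c \in F^\times$ with $\sq(P) = cP$ and $\sq(Q) = cQ$; every nonzero coefficient of $P$ (resp.\ $Q$) then satisfies $\sq(x)/x = c$, so any two such coefficients are $\k$-proportional (their ratio lies in $F^\sq = \k$), and $P/Q$ itself reduces to an element of $\k(t_1,\dots,t_n) = \widetilde{\k}$. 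The general regular $\widetilde{\k}/\k$ is obtained from this by writing $\widetilde{\k}$ as the directed union of its finitely generated regular subextensions of $\k$ and passing to the filtered colimit; each such subextension is a finite separable algebraic extension of some purely transcendental $\k(t_1,\dots,t_n)$, and the passage through the algebraic step uses decisively that $\k$ is algebraically closed in $\widetilde{\k}$ (by regularity), which prevents the appearance of new $\sq$-constants. This is exactly the descent strategy used for the companion Lemma~\ref{lem:extconst}, whose proof in \cite{DHR} provides the template I would imitate.
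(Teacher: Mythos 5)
Your overall strategy is sound and the first three steps are fine, but for the key point $\widetilde{F}^{\sq}=\widetilde{\k}$ you take a genuinely different and considerably longer route than the paper. The paper's proof is three lines: regularity gives integrality, the inversivity of $\k$ gives $\sp$-separability of $\widetilde{\k}/\k$ via \cite[Corollary A.14]{DVHaWib1} (hence injectivity of $\sp$ on $\widetilde{\k}\otimes_\k F$), and the computation of the $\sq$-constants is delegated to \cite[Lemma 2.3]{DHR}. That last argument is not the transcendence-basis descent you describe as your ``template''; it is the $\sq$-simplicity trick: for $\xi=a/b\in\widetilde{F}^{\sq}$, the set $I=\{r\in R \mid r\xi\in R\}$ is a nonzero $\sq$-ideal of $R=\widetilde{\k}\otimes_\k F$, and $R$ is $\sq$-simple (as in \cite[Lemma 1.11]{VdPS97}, since $F$ is a $\sq$-simple ring with constants $\k$), so $I=R$, $\xi\in R$, and one concludes with $R^{\sq}=\widetilde{\k}$, which you prove correctly. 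Your replacement of the separability citation by the direct basis computation using inversivity of $\k$ is correct and arguably more self-contained; your coprime-fraction argument in the purely transcendental case is also correct.

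The one soft spot is the algebraic step of your reduction. You assert that passing from $\k(t_1,\dots,t_n)$ to a finite separable extension $E$ ``uses decisively that $\k$ is algebraically closed in $\widetilde{\k}$, which prevents the appearance of new $\sq$-constants'', but no argument is given, and the stated reason is not the operative one: $\mathrm{Frac}(E\otimes_\k F)$ may well contain elements algebraic over $\k(t)$ that do not lie in $E$ (take $F=\C_E(z)$, $\k=C_E$). What actually closes this step is the standard fixed-field degree bound: $\sq$-fixed elements that are linearly independent over $\k(t)=F(t)^{\sq}$ remain linearly independent over $F(t)$ (minimal-relation argument), so $[\mathrm{Frac}(E\otimes_\k F)^{\sq}:\k(t)]\le[\mathrm{Frac}(E\otimes_\k F):F(t)]=[E:\k(t)]$, forcing the fixed field to equal $E$. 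With that supplied your proof is complete, but you should either write it out or simply use the $\sq$-simplicity argument, which bypasses the entire reduction.
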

 
\begin{proof}

 Since $\widetilde{\k}$ is a regular extension of $\k$, the ring $\widetilde{\k} \otimes_\k F$
is an integral domain.
Moreover since  $\widetilde{\k}$ is  a $\sp$-separable $\sp$-field extension of $\k$ by  \cite[Corollary~A.14]{DVHaWib1}, the operator $\sp$ is injective on $\widetilde{\k} \otimes_\k F$ and thus extends to $\widetilde{F}$. The rest of the proof  is essentially \cite[Lemma 2.3]{DHR}.

\end{proof}

\begin{lem}\label{lem:descentsolutions}
 Let $F$ be a $\ssp$-field and let  $\k=F^{\sq}$ be the field of $\sq$-constants of $F$.{ We assume that $\k$ is an inversive $\sp$-field.}  Let $\widetilde{\k}$ be a regular  $\sp$-field extension of  $\k$ considered as a field of $\sq$-constants. By Lemma \ref{lem:extconst2}, we can consider the $\ssp$-field
 $\widetilde{F}=\mathrm{Frac}(\widetilde{\k} \otimes_\k F)$. Let ${A \in \GL_n(F)}$ and let $\V_\k$ (resp. $\V_{\widetilde{\k}}$) be the solution space of $\sq(Y)=AY$ in $F^n$ (resp. in $\widetilde{F}^n$). Then, $\V_{\widetilde{\k}} =\V_\k \otimes_\k \widetilde{\k}$.
\end{lem}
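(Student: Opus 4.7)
The inclusion $\V_\k\otimes_\k\widetilde{\k}\subseteq\V_{\widetilde{\k}}$ is immediate: if $Y\in\V_\k$ and $c\in\widetilde{\k}=\widetilde{F}^{\sq}$, then $\sq(cY)=c\,\sq(Y)=cAY=A(cY)$. Fixing a $\k$-basis $\{c_\alpha\}$ of $\widetilde{\k}$ and using the identification $\widetilde{\k}\otimes_\k F=\bigoplus_\alpha c_\alpha F$ (as $F$-modules), one sees that the natural $\widetilde{\k}$-linear map $\V_\k\otimes_\k\widetilde{\k}\to\V_{\widetilde{\k}}$ is injective, for its target is contained in $\bigoplus_\alpha c_\alpha F^n\hookrightarrow\widetilde{F}^n$, inside which any relation $\sum_\alpha c_\alpha Y_\alpha=0$ with $Y_\alpha\in F^n$ forces $Y_\alpha=0$ for every $\alpha$.

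The heart of the proof is the reverse inclusion. My strategy is to establish the sharper descent statement that $\V_{\widetilde{\k}}$ is already contained in $B^n$, where $B:=\widetilde{\k}\otimes_\k F\subseteq\widetilde{F}$. Granting this, any $Z\in\V_{\widetilde{\k}}$ can be written uniquely as $Z=\sum_\alpha c_\alpha Y_\alpha$ with $Y_\alpha\in F^n$, and substituting into $\sq(Z)=AZ$ yields $\sum_\alpha c_\alpha(\sq(Y_\alpha)-AY_\alpha)=0$ in $B^n$. The $F$-linear independence of the $c_\alpha$'s in $B$ then forces $\sq(Y_\alpha)=AY_\alpha$ for each $\alpha$, so $Y_\alpha\in\V_\k$ and $Z\in\V_\k\otimes_\k\widetilde{\k}$, as required.

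The main obstacle is the descent statement $\V_{\widetilde{\k}}\subseteq B^n$. I would first reduce to the case of a finitely generated regular extension $\widetilde{\k}/\k$, since a fixed $Z$ involves only finitely many generators of $\widetilde{\k}$ in its denominators; such a sub-$\sp$-field remains regular over $\k$ by restriction. For the key case $\widetilde{\k}=\k(t)$ a simple transcendental extension, the argument runs as follows: write each component $z_i$ of $Z$ in lowest terms as $p_i/q_i$ in the UFD $F[t]$, set $q=\mathrm{lcm}_i(q_i)\in F[t]$ and $M=qZ\in F[t]^n$, and note that $\sq(Z)=AZ$ translates into the relation $q\,\sq(M)=\sq(q)\,AM$ in $F[t]^n$. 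A coprimality analysis in the UFD $F[t]$, combined with $A^{-1}\in F^{n\times n}\subseteq F[t]^{n\times n}$, yields that $q/\gcd(q,\sq(q))$ divides every $M_j$; since no prime of $F[t]$ can divide $q$ and every $M_j$ simultaneously (by the lowest-terms choice), this quotient must be a unit, so up to rescaling $q\mid\sq(q)$. Comparing $t$-degrees forces $\sq(q)=cq$ for some $c\in F^\times$, and then normalizing the leading coefficient of $q$ in $t$ gives $c=1$, whence $q\in F^{\sq}[t]=\k[t]\subseteq\widetilde{\k}$ and $Z=(1/q)M\in\widetilde{\k}\cdot F[t]^n\subseteq B^n$. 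For a general finitely generated regular $\widetilde{\k}/\k$, one iterates this reasoning along a chain of intermediate extensions, or runs the analogous UFD argument in $F[t_1,\ldots,t_m]$ for a transcendence basis, handling any residual algebraic step by standard descent tools for field extensions (noting that regularity in characteristic zero constrains the algebraic part significantly).
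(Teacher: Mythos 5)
Your proof is correct in substance and shares the paper's overall skeleton --- both arguments reduce the lemma to showing that a solution in $\widetilde{F}^n$ already lies in $(\widetilde{\k}\otimes_\k F)^n$ and then expand in a $\k$-basis of $\widetilde{\k}$ --- but you establish that key descent step by a genuinely different route. The paper introduces the ideal of denominators $\mathfrak{a}=\{r\in S \mid rf\in S^n\}$ in $S=F\otimes_\k\widetilde{\k}$, observes (using $A^{-1}\in\GL_n(F)$) that it is a nonzero $\sq$-ideal, and concludes $1\in\mathfrak{a}$ from the $\sq$-simplicity of $S$ (\cite[Lemma~1.11]{VdPS97}); this disposes of an arbitrary regular extension $\widetilde{\k}/\k$ in a few lines. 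You instead clear denominators by hand: reduce to a finitely generated subextension, run a gcd/degree analysis in the UFD $F[t]$ (or $F[t_1,\dots,t_m]$) to normalize the denominator into $\k[\underline{t}]$, and treat the residual finite algebraic step separately. Your transcendental-step computation is correct as written, and the algebraic step is in fact easier than your hedging suggests: $\widetilde{\k}\otimes_{\k(\underline{t})}F(\underline{t})$ is a finite domain over the field $F(\underline{t})$ (being a localization of the domain $\widetilde{\k}\otimes_\k F$), hence already a field, so there are no denominators to clear there and the basis expansion applies directly. What your approach buys is elementarity --- no appeal to difference-simplicity of constant extensions --- at the cost of a reduction step and a case analysis that the paper's argument avoids entirely; if one is willing to quote the simplicity of $F\otimes_\k\widetilde{\k}$ as a $\sq$-ring, the paper's route is the more economical one.
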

\begin{proof}
Obviously, we have $ \V_\k \otimes_\k \widetilde{\k} \subset \V_{\widetilde{\k}}$. Let $f \in \V_{\widetilde{\k}}$ be a non zero solution.  Set $S= F \otimes_\k \widetilde{\k}$. Let us consider 
$$
 \mathfrak{a}=\{ r \in S  | rf \in S \}.
$$ 
Since $\sq(f)=Af$, the  ideal $\mathfrak{a}$ is a non zero $\sq$-ideal of $S$. By \cite[Lemma~1.11]{VdPS97}, the ring $S$ is $\sq$-simple. Therefore $1 \in \mathfrak{a}$
and $f \in S$. Let $(e_i)_{i \in I}$ be a basis of $\widetilde{\k}$ over $\k$ and let us write $f =\sum_{i \in I}f_i e_i$ with $f_i \in F$.
Then, $\sq(f)=Af$ implies $\sq(f_i)=Af_i$, which ends the proof. \end{proof}

\subsection{Parametrized Difference Galois theory}\label{subsec:paramdiffgaldiscrete}

 In this section, we study the $\sp$-algebraic relations satisfied by the solutions of  $q$-difference equations over $\C(z)$. We  consider the subfield $\Kqp = \bigcup_{j=1}^\infty \C(z^{1/j})$ of the  field $\Kqpform =\bigcup_{j=1}^\infty \C((z^{1/j}))$.  Let $q$ (resp. $\tq$) be a non zero complex number  such that $|q|\neq 1$ (resp. $|\tq|\neq 1$). We choose a consistent system $(q_j)_{j \geq 1}$ (resp.$(\tq_j)_{j \geq 1}$) of roots of $q$ (resp. $\tq$);  This allows us to extend the action of $\sq$ (resp. $\sp$) to $\Kqpform$ as in \S \ref{sec hyptr}. Then,  $\Kqp$ is a $(\sq,\sp)$-subfield of $\Kqpform$ with $\Kqp^{\sq}=\Kqpform^{\sq}=\C$.\par

Given a $\ssp$-field $\K$ and $A \in \GL_n(\K)$, the $\sp$-Galois theory developed in \cite{OvWib}
aims at understanding the algebraic relations between  the solutions of $\sq(Y)=AY$ and their successive transforms 
with respect to $\sp$ from a Galoisian point of view. In this article,  we will  restrict ourselves to the case where the base field is $\Kqp$. In particular, our base field is an inversive $\sq$ and $\sp$-field, that is $\sp$ and $\sq$ are automorphisms of $\Kqp$. In this part of the paper, the word parametrized refers to the parametric action of the discrete operator $\sp$ whereas in the first part, it was related to the parametric action of the derivative.  Therefore the word parametrized does not refer to the same parametric action depending on the part of the paper. Since the two parts are almost independent, this convention will not lead to confusions. It will also avoid heavy terminology.\par
The following definition concerns the notion of ``minimal ring of solution'' in the context of parametrized difference equations. It summarizes in our context \cite[Definitions  2.2, 2.6, 2.18 and Proposition 2.21]{OvWib}.

\begin{defi}
Let  $A \in \GL_n(\Kqp)$. A  $\Kqp$-$(\sq,\sp)$-pseudofield  extension $\cQ_S$, see Remark \ref{rem:pseudofieldidempotent} below, is a $\ssp$-Picard-Vessiot extension   for $\sq(Y)=AY$ over $\Kqp$ if 
there exists a fundamental matrix $U \in \GL_n(\cQ_S)$ such that $\sq(U)=AU$, $\cQ_S=\Kqp\langle U\rangle_{\sp}$ and $\cQ_S^{\sq}=\C$. The $(\sq,\sp)$-algebra $S= \Kqp\{U, \frac{1}{\det(U)}  \}_{\sp}$ is called $\ssp$-Picard-Vessiot ring for   $\sq(Y)=AY$ over $\Kqp$. In particular, $S$ is   $\sq$-simple, i.e., it has no proper $\sq$-ideal and $\cQ_S$ is the total ring of quotients of $S$.
\end{defi}

\begin{rem}\label{rem:pseudofieldidempotent} In the above definition, the term \emph{pseudofield} needs to be explained. 
We say that a $\sq$-ring $L$ is a pseudofield if there exist 
orthogonal idempotent elements $e_1,\dots,e_r$ such that 
\begin{itemize}
\item $L=Le_1 \oplus Le_2 \oplus \hdots \oplus Le_r$,
\item $\sq(e_i)=e_{i+1 \mathrm{ mod } r}$ for any $i=1,\dots,r$,
\item $L e_i$ is a field for any $i=1,\dots,r$.
\end{itemize}
Therefore, the notation $\cQ_S=\Kqp\langle U\rangle_{\sp}$ is somehow abusive since
$\Kqp\langle U\rangle_{\sp}$ is not   the $\sp$-field generated by $U$ but the pseudofield generated by $U$ and 
its transforms with respect to $\sp$. Nonetheless, we prefer to abuse notation rather than introducing one more complicated notation.
\end{rem}

We have the following result:

\begin{lem}\label{lem:relativalgclosedbasefieldPPVfield}
Let  $A \in \GL_n(\Kqp)$ and let $\cQ_S$ be a $\ssp$-Picard-Vessiot extension  for $\sq(Y)=AY$  over $\Kqp$. If $\cQ_S$ is a field
then $\Kqp$ is relatively algebraically closed in $\cQ_S$.
\end{lem}
\begin{proof}
 Let $g \in \cQ_S$ be algebraic over $\Kqp$. Let $U$ be a fundamental solution matrix such that $\cQ_S =\Kqp \langle U \rangle_\sp$. There exists $l \in \N^*$ such that $g \in \Kqp (U, \sp(U),\dots, \sp^{l}(U))$.  Since $g$ is algebraic over $\C(z^{1/t})$ for some $t\in \N^*$ and $\C(z^{1/t})$ is a $\sq$-field, any transform $\sq^i(g)$ for $i \in \N^*$ is algebraic over $\C(z^{1/t})$. Since $\C(z^{1/t})(U, \sp(U),\dots, \sp^{l}(U))$ is a finitely generated field extension, the relative algebraic closure of $\C(z^{1/t})$
inside  $\C(z^{1/t})(U, \sp(U),\dots, \sp^{l}(U))$ is finite. Moreover, $\C(z^{1/t})(U, \sp(U),\dots, \sp^{l}(U))$ is a $\sq$-field. This proves that the $\sq$-field extension $\C(z^{1/t}) \subset \C(z^{1/t})\langle g \rangle_{\sq}$ is finite. By \cite[Proof of Proposition~12.2]{VdPS97}, there exists $m \in \N^*$ such that  $\C(z^{1/t})\langle g \rangle_{\sq} \subset \C(z^{1/tm})$. This proves that $\Kqp$ is relatively algebraically closed in $\cQ_S$.
\end{proof}

 The following proposition shows that, up to considering iterates of the operators $\sp$ and $\sq$, one can always reduce our study to the case where  the  $\ssp$-Picard-Vessiot extension $\cQ_S$ is a field and   the base field $\Kqp$ is relatively algebraically closed in $\cQ_S$. This  allows us to bypass some  difficulties in difference algebra,  that are due to algebraic extensions. 

 \begin{propo}\label{propo:existenceparamfieldsolutionqdiff}
 Let $A \in \GL_n( \Kqp)$ and let $(u_1,\dots,u_n) \in (F^*)^n$ with $F$ a $(\sq,\sp)$-field extension of $K$  such that $(u_1,\dots,u_n)^{t}$ is a solution of $\sq(Y)=AY$ and  $F^\sq=\C$.
\begin{trivlist}
\item (1)
There exist  positive integers $r,s$ and  a $(\sq^{r},\sp^s)$-Picard-Vessiot  extension $L_A$ for  the system $\sq^r(Y)=\sq^{r-1}(A) \hdots \sq(A) A Y$   over $\Kqp$ that contains $   u_1,  u_2, \dots , u_n  $,  such that $L_A$ is a field and $\Kqp$ is relatively algebraically closed in $L_A$.
\item (2) If $A \in \GL_n(\C(z))$ and  $G$ denotes the difference Galois group of the system ${\sq(Y)=AY}$ over $\C(z)$, then the difference Galois group of ${\sq^{r}(Y)=\sq^{r-1}(A) \hdots \sq(A) A Y}$ over $\Kqp$ coincides with the connected component of $G$. It is therefore connected.
\end{trivlist}
  \end{propo}
  \begin{proof}
  \begin{trivlist}
  \item (1)
Let us note that without loss of generality, we can assume that ${F=\Kqp\langle u_1,\dots,u_n \rangle_{\sp}}$.  Since $F^{\sq}=\C$ is algebraically closed and $\sp$ is surjective on $\C$, there exists a $\ssp$-Picard-Vessiot extension  ${\cQ_S= F\langle U \rangle_\sp}$ for the system
 $\sq(Y)=AY$ over $F$ by \cite[Corollary~2.29]{OvWib}.  Since $(u_1,\dots,u_n) \in F^n$, we can assume that it is the first column of $U$.  Moreover by \cite[Lemma 2.11 and Proposition 2.21] {OvWib}, the endomorphism $\sp$ is injective on $\cQ_S$. Let $e_1,\dots,e_r$ be the 
orthogonal idempotents relative to the pseudofield structure of $\cQ_S$ as in Remark~\ref{rem:pseudofieldidempotent}. It is easily seen that $\sq^r(e_i)=e_i$ for any $i=1,\dots,r$. Moreover, since $\sp$ is injective on $\cQ_S$, it permutes the orthogonal idempotents 
$e_i$ so that there exists a positive  integer $s$ dividing  $r!$ such that 
$\sp^s(e_i)=e_i$ for any $i=1,\dots,r$. This proves that $\cQ_S=K_1 \oplus \hdots \oplus K_r$
where $K_i=e_i\cQ_S$ is a $(\sq^r, \sp^s)$-field extension of $F$. If we denote by $\pi: \cQ_S \rightarrow K_1$ the projection of $\cQ_S$ on $K_1$, it is a surjective $(\sq^r, \sp^s)$-morphism. Then, denoting $U_1 =\pi(U) \in \GL_n(K_1)$, we find that $K_1=F\langle U_1 \rangle_{\sp^s}$. Since $\cQ_S^{\sq}=\C$ is algebraically closed, the elements of $\cQ_S$
fixed by any iterates of $\sq$ are in $\C$ by \cite[Remark 2.25]{OvWib} so that $K_1^{\sq^r}=\C$. To conclude note that $L_A=\C(z^*)\langle U_1 \rangle_{\sp^s} \subset K_1$ is a $(\sq^r, \sp^s)$-field extension of $\C(z^*)$ that is $\sp^s$-generated  by the entries of the fundamental solution matrix $U_1$ of the system $\sq^r(Y)=\sq^{r-1}(A) \hdots \sq(A) A Y$.
This proves that $L_A$ is a $ (\sq^{r},\sp^s)$-Picard-Vessiot  extension that is a field. Finally, we apply
Lemma \ref{lem:relativalgclosedbasefieldPPVfield} to $L_A$ replacing $\sq$ and $\sp$ by their suitable iterates.
\item (2)
Since $\Kqp$ is an algebraic extension of $\C(z)$, \cite[Theorem 7]{Ro15} implies that the difference Galois group  $G'$ of $\sq(Y)=AY$ over the $\sq$-field $\Kqp$ has the same connected component as $G$.  Let $\cQ_R=\Kqp(U_1) \subset L_A$. By \cite[Lemma~2.20]{OvWib} the field $\cQ_R$ is a Picard-Vessiot extension for ${\sq^{r}(Y)=\sq^{r-1}(A) \hdots \sq(A) A Y}$ in the sense of  \S \ref{sec:classicalgaloisgroup}.
By \cite[Theorem 12]{Ro15},   the Galois group $H$ of ${\sq^{r}(Y)=\sq^{r-1}(A) \hdots \sq(A) A Y}$ over $\Kqp$ is a normal algebraic subgroup of $G'$ and the quotient $G'/H$ is finite. To conclude, we need to prove that $H$ is connected. Let $H^{\circ}$ be its connected component, the field $\cQ_R^{{H}^{\circ}}$ is a finite extension of $\Kqp$ by the Galois correspondence for difference Galois group  \cite[Theorem~1.29]{VdPS97}.  Since $\Kqp$ is relatively algebraically closed in the field $L_A$, we find that ${\cQ_R}^{H^{\circ}}=\Kqp^H =\Kqp$ so that ${H}^{\circ}=H$ by applying the Galois correspondence again.
\end{trivlist}
\end{proof}

Unlike differential algebraic  groups, a $\sp$-algebraic group  is not entirely determined by its set of  points in some difference closure. This comes essentially from the fact that
 there are many new  type  of nilpotent elements in  difference algebra. For instance, any element $b$ such that $\sp^n(b)=0$. This last equation implies $b=0$ if and only if $\sp$ is injective, which is not necessarily the case in arbitrary $\sp$-rings. The following example illustrates the fact that one has to consider points of $\sp$-algebraic groups in arbitrary  $\sp$-rings and not only in $\sp$-fields.

\begin{ex}
Consider the following system of difference equations ${(S_1)=\{ y^2=1 \}}$ and $ (S_2)=\{y^2 =1 \mbox{ and } \sp(y)=y \}$ over $\C$. We denote by $\V_{S_1}(R)$ (resp. $\V_{S_2}(R)$) the zeros of $(S_1)$ (resp. $(S_2)$) in some $\C$-$\sp$-algebra $R$. Then, ${\V_{S_1 }(\k)=\V_{S_2 }(\k)=\{1,-1\}}$ for any $\sp$-field extension $\k$ of $\C$.
However, if we consider the ring of sequences $ (a_n)_{n \in \Z} \in \C^\Z$ with the action of $\sp$ given by the shift operator, then  $\V_{S_1 }(\C^\Z)=\{(a_n)_{n \in \Z}| a_n =1 \mbox{ or } -1 \mbox{ for all } n \in \Z\}$ whereas $\V_{S_2 }(\C^\Z)$ is the union of the constant sequence $1$ and the  constant sequence $-1$.  
\end{ex}
 Therefore, we need to adopt the following  functorial approach. We denote by $\Alg_{\C,\sp}$ the category of $\C$-$\sp$-algebras
 and by $\Sets$ the category of sets. 
 
 \begin{defi}[\cite{OvWib}, Definition 2.50]\label{def:spgaloisgroup}
 Let $A \in \GL_n(\Kqp)$ and let ${\cQ_S=\Kqp\langle U\rangle_{\sp}}$ be a $\sp$-PV extension for $\sq(Y)=AY$ over $\Kqp$. Set $S=\Kqp\{U,\frac{1}{\det(U)}\}_{\sp}$. Then, the $\sp$-Galois group of $\cQ_S$ over $\Kqp$ is defined as the functor:

$$\begin{array}{llll}
 \spGal(\cQ_S/\Kqp):& \Alg_{\C, \sp} &\rightarrow &\Sets \\
&B &\mapsto&   \Aut^{\sp}(S \otimes_\C B/ \Kqp \otimes_\C B),
\end{array}$$
where, $\sq$ acts as the identity on $B$ and $\Aut^{\ssp}(S \otimes_\C B/ \Kqp \otimes_\C B)$ is the group of automorphisms of $S \otimes_\C B$ inducing the identity on $ \Kqp \otimes_\C B$ and commuting with $\sp$ and $\sq$.
 \end{defi}
 It is proved in \cite[Lemma 2.51]{OvWib} that this functor is represented by a  finitely $\sp$-generated $\C$-$\sp$-Hopf algebra 
 $\C\{ \spGal(\cQ_S/\Kqp)\}$ (see  Definition~\ref{defi:sHopfalgebra}). Therefore,   $\spGal(\cQ_S/\Kqp)$ is a $\sp$-algebraic group  (see Definition~\ref{defi:sgroupscheme}). For a brief introduction to $\sp$- algebraic groups,  we refer to Section~$ \ref{secA2}$. Any algebraic group $G$ over $\C$ gives rise to a $\sp$-algebraic group  $\bold{G}$ over $\C$ by Proposition \ref{propo:algschelmediffschemezarclosure}. We would like to recall that, since they are not defined with respect to the same geometry, the $\sp$-algebraic group  $\bold{G}$ should not be confused with the  algebraic group  $G$. \par

  In the notation of Definition \ref{def:spgaloisgroup}, if $B$ is a $\C$-$\sp$-algebra, then the matrix ${U \otimes 1 \in \GL_n(S \otimes_\C B)}$ is a fundamental matrix of solutions of $\sq(Y)=AY$ in $S \otimes_\C B$. Then, for any $\phi \in \spGal(\cQ_S/\Kqp)(B)$, the matrix $\phi(U \otimes 1)$ is also a fundamental matrix of solutions of $\sq(Y)=AY$ in $S\otimes_\C B$. Thus, there exists ${[\phi]_U \in \GL_n((S \otimes_\C B)^{\sq})=\GL_{n,\C}(B)}$ such that ${\phi (U \otimes 1)= (U\otimes 1) [\phi]_U}$. Here $\bold{\GL_{n,\C}}$ is the $\sp$-algebraic group corresponding to the general linear algebraic group of size $n$ over $\C$ (see Example~\ref{exa:fundadiffsubgroupGln}).  
 
 \begin{propo}\label{propo:repspgaloisgroup}
The functor  $\rho_U:$
$$
\begin{array}{lll}
\spGal (\cQ_S/\Kqp)& \rightarrow& \bold{\GL_{n,\C}}\\
\phi \in \spGal (\cQ_S/\Kqp)(B) &\mapsto &[\phi]_U \in GL_{n,\C}(B),
\end{array}
 $$
where $B\in \Alg_{\C,\sp}$ is a $\sp$-closed embedding (see \cite[Definition A.3]{DVHaWib1}).
 \end{propo}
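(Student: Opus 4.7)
The plan is to adapt the classical construction in difference Picard--Vessiot theory (see \cite[Theorem 1.13]{VdPS97}) that identifies the Galois group with a closed subgroup of $\GL_n$, translated to the $\sp$-algebraic group scheme setting of \cite{OvWib}. First I would check that $\rho_U$ is well defined. Given $B \in \Alg_{C_E,\sp}$ and $\phi \in \spGal(\cQ_S/\C_E(z))(B)$, since $\sq$ acts as the identity on $B$ and $\phi$ commutes with $\sq$,
\begin{equation*}
\sq(\phi(U\otimes 1)) = \phi(\sq(U\otimes 1)) = (A\otimes 1)\phi(U\otimes 1),
\end{equation*}
so $(U\otimes 1)^{-1}\phi(U\otimes 1)$ lies in $\GL_n((S\otimes_{C_E}B)^{\sq})$. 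Using $\cQ_S^{\sq}=C_E$ together with a standard base-change argument, one identifies $(S\otimes_{C_E}B)^{\sq}$ with $B$, yielding $[\phi]_U \in \GL_n(B) = \GL_{n,C_E}(B)$. The same formal computation shows that $\rho_U$ is a natural transformation and a group homomorphism, because $[\phi_1\phi_2]_U = [\phi_1]_U[\phi_2]_U$ follows from the fact that $[\phi_2]_U \in B$ is fixed by $\phi_1$.

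Next, I would show that $\rho_U(B)$ is injective for every $B \in \Alg_{C_E,\sp}$. This is immediate from the fact that $S = \C_E(z)\{U,\det(U)^{-1}\}_{\sp}$ is $\sp$-generated over $\C_E(z)$ by the entries of $U$ and $\det(U)^{-1}$: the base change $S\otimes_{C_E}B$ is then $\sp$-generated over $\C_E(z)\otimes_{C_E}B$ by the entries of $U\otimes 1$ and $\det(U)^{-1}\otimes 1$, so any $\phi \in \spGal(\cQ_S/\C_E(z))(B)$ is entirely determined by its action on these generators, i.e., by $[\phi]_U$.

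Finally, to upgrade pointwise injectivity to a $\sp$-closed embedding in the sense of \cite[Definition A.3]{DVHaWib1}, I would switch to the Hopf algebra language. By \cite[Lemma 2.51]{OvWib}, the functor $\spGal(\cQ_S/\C_E(z))$ is represented by a finitely $\sp$-generated $C_E$-$\sp$-Hopf algebra $H$. The morphism $\rho_U$ corresponds to a morphism $\rho_U^{\sharp}$ from the $\sp$-coordinate ring of $\GL_{n,C_E}$ to $H$, and the $\sp$-generation argument above shows that the images of the coordinate $\sp$-indeterminates under $\rho_U^{\sharp}$ $\sp$-generate $H$ as a $C_E$-algebra. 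Hence $\rho_U^{\sharp}$ is surjective, which is precisely the condition for $\rho_U$ to be a $\sp$-closed embedding of $\sp$-algebraic group schemes.

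The main technical point is the identification $(S\otimes_{C_E}B)^{\sq}=B$ for an arbitrary $C_E$-$\sp$-algebra $B$, not merely for fields or domains. This is the analogue in the $\ssp$-setting of the familiar identity $\cQ_S^{\sq}=C_E$, and it rests on the $\sp$-simplicity of $S$ over $\C_E(z)$ combined with descent along $C_E \to B$. Once this is secured, the remainder of the argument is a routine transcription of the classical Picard--Vessiot proof to the category of $\sp$-algebraic group schemes.
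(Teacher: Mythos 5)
Your proposal is correct and follows essentially the route the paper itself intends: the paper gives no details here, simply remarking that the proof is the exact analogue of \cite[Proposition 2.5]{DVHaWib1} and is ``between the lines of'' \cite[Lemma 2.51]{OvWib}, and your reconstruction (well-definedness via $\sq$-invariance of $(U\otimes 1)^{-1}\phi(U\otimes 1)$, the identification $(S\otimes_{C_E}B)^{\sq}=B$ from simplicity of $S$, injectivity from $\sp$-generation, surjectivity of the dual Hopf-algebra map) is precisely that standard argument. The only step I would tighten is the last one: pointwise injectivity of $\rho_U(B)$ does not by itself yield surjectivity of $\rho_U^{\sharp}$, so you should either invoke the fact that the representing $\sp$-Hopf algebra is $(S\otimes_{\C_E(z)}S)^{\sq}$, which is $\sp$-generated over $C_E$ by the entries of $Z=(U\otimes 1)^{-1}(1\otimes U)$ and $\det(Z)^{-1}$ (this is exactly what is ``between the lines'' of \cite[Lemma 2.51]{OvWib}), or appeal to the general theorem that a monomorphism of $\sp$-algebraic group schemes is a $\sp$-closed embedding.
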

 \begin{proof}
The proof is  the exact  analogue of \cite[Proposition 2.5]{DVHaWib1}  and its proof is between the lines of  \cite[Lemma 2.51]{OvWib}.
 \end{proof}
 
 This proposition allows to identify the $\sp$-Galois group with a $\sp$-subgroup  of $\bold{\GL_{n,\C}}$ via the choice
 of a fundamental matrix of solutions $U$. Another choice of fundamental matrix of solutions leads to a conjugate representation. Therefore, $\spGal(\cQ_S/\Kqp)$ is entirely determined  by a $\sp$-Hopf  ideal $\mathfrak{I}$ of  $ \C\{\bold{\GL_{n,\C}}\}=\C\{X,\frac{1}{det(X)}\}_{\sp}$ (see Example \ref{exa:fundadiffsubgroupGln}). The elements of $\mathfrak{I}$ are $\sp$-polynomials and  we call  them   the defining equations of $\spGal(\cQ_S/\Kqp)$  in $\bold{\GL_{n,\C}}$.
 
 In $\sp$-Galois theory, one has a complete Galois correspondence (\cite[Theorem 2.52 and Lemma 2.53]{OvWib}). We only recall the following results.
 \begin{propo}\label{propo:spgaloiscorresptransdeg}
 Let $A \in \GL_n(\Kqp)$ and let $\cQ_S $ be a $\ssp$-Picard-Vessiot extension of $\sq(Y)=AY$ over $\Kqp$. Then,
\begin{multline*}
\cQ_S^{ \spGal(\cQ_S/\Kqp)}=\\
\{ x=\frac{r}{s} \in \cQ_S | \forall B \in  \Alg_{\C, \sp}, \forall g \in \spGal(\cQ_S/\Kqp)(B),\\ g(r \otimes 1). (s\otimes 1)=(r \otimes 1).(g(s\otimes 1))\}=\Kqp. 
\end{multline*}
Moreover, we have $\spdim(\spGal(\cQ_S/\Kqp))=\sptrdeg(\cQ_S/\Kqp)$ (for precise definitions see \cite[\S A.7]{DVHaWib1}). 
 \end{propo}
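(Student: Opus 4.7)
My plan is to deduce both assertions from the general parametrized $\ssp$-Galois correspondence established in \cite[Theorem~2.52 and Lemma~2.53]{OvWib}, specialized to the concrete setting in which the base field is $\C_E(z)$ and the PV ring $S=\C_E(z)\{U,\det(U)^{-1}\}_{\sp}$ lives inside $\cM er(\C^\times)$. The inclusion $\C_E(z)\subseteq \cQ_S^{\spGal(\cQ_S/\C_E(z))}$ is immediate from the definition of the Galois functor: for every $C_E$-$\sp$-algebra $B$, every $g\in \spGal(\cQ_S/\C_E(z))(B)$ restricts to the identity on $\C_E(z)\otimes_{C_E} B$ by construction.

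For the reverse inclusion, the central tool is the torsor property of $S$ over $\C_E(z)$. Mimicking \cite[Theorem~6.20]{HS}, one would first establish a canonical isomorphism of $\ssp$-algebras
$$
\mu : S\otimes_{\C_E(z)} S \;\longrightarrow\; S\otimes_{C_E} C_E\{\spGal(\cQ_S/\C_E(z))\},\qquad a\otimes b\mapsto (a\otimes 1)\,\rho(b),
$$
where $\rho$ is the coaction induced by the action of the Galois group scheme on $S$. If $x=r/s\in\cQ_S$ satisfies the fixed-point condition of the statement, then this condition translates, through $\mu$, into the equality $r\otimes s = s\otimes r$ in $S\otimes_{\C_E(z)} S$. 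Since $S$ is an integral domain (being a subring of the field $\cQ_S$) and is faithfully flat over $\C_E(z)$ by virtue of the torsor structure, a standard descent argument forces $r/s\in\C_E(z)$.

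For the dimension identity, the same torsor isomorphism $\mu$ identifies $S$, after a suitable base extension, with the $\sp$-coordinate ring of the Galois group scheme; consequently the $\sp$-Krull dimension of $S$ as a $\C_E(z)$-algebra coincides with $\spdim(\spGal(\cQ_S/\C_E(z)))$. Passing to the field of fractions $\cQ_S=\operatorname{Frac}(S)$, and noting that $\sp$ extends uniquely from $S$ to $\cQ_S$, this yields $\sptrdeg(\cQ_S/\C_E(z))=\spdim(\spGal(\cQ_S/\C_E(z)))$, as required.

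The main technical obstacle is the rigorous construction of the torsor isomorphism $\mu$ in the $\sp$-algebraic framework. In contrast with the classical differential or algebraic Picard--Vessiot settings, tensor products over $C_E$ of $\sp$-algebras can acquire new $\sp$-ideals, so one must carefully combine the $\sp$-simplicity of $S$ with the inversiveness of $C_E$ to make descent go through. These subtleties are handled in the abstract framework of \cite[\S 2]{OvWib}, and the present proposition is obtained by specializing those results to the concrete meromorphic PV extension $\cQ_S\subset \cM er(\C^\times)$.
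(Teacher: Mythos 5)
Your proposal is correct and matches the paper's treatment: the paper gives no independent proof of this proposition, presenting it as a direct recall of the general $\ssp$-Galois correspondence of \cite[Theorem 2.52 and Lemma 2.53]{OvWib} specialized to the base field $\C_E(z)$ and the meromorphic PV extension $\cQ_S\subset \cM er(\C^{\times})$. Your sketch of the underlying torsor/descent mechanism is a faithful outline of how those cited results are established, so there is nothing to add.
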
 
 The last equality means that the complexity of the defining equations of  $\spGal(\cQ_S/\Kqp)$ corresponds
 precisely to the complexity of the $\sp$-difference algebraic relations satisfied by the solutions of the system 
 $\sq(Y)=AY$ in $\cQ_S$. \\ \par 
 
The relation between  the $\ssp$-Picard-Vessiot theory and the  non parametrized Picard-Vessiot theory as developed in \cite{VdPS97} is  explained below.

\begin{propo}\label{propo:schematicalgebraicgaloisgroupcomparaison}
 Let $A \in \GL_n(\Kqp)$ and let $\cQ_S$ be a $\sp$-PV extension of $\sq(Y)=AY$ over $\Kqp$. Set $R=\C(z^*)[U,\frac{1}{\det(U)}] \subset \cQ_S$ and denote by $\cQ_R$ the total ring of quotients of $R$. The following holds:
 
 \begin{itemize}
 \item The $\Kqp$-$\sq$-algebra $\cQ_R$ is a Picard-Vessiot extension for ${\sq(Y)=AY}$ over $\Kqp$ as in $\S \ref{sec:classicalgaloisgroup}$;
 \item The  $\sp$-Galois group $ \spGal(\cQ_{S}/\Kqp)$ is a Zariski dense subgroup of $ \Gal(\cQ_{R}/\Kqp)$ (see Proposition \ref{propo:algschelmediffschemezarclosure}).
 \end{itemize}
 
\end{propo}
 \begin{proof}
 The first statement is \cite[Lemma 2.20]{OvWib} and the second statement is a discrete analogue of  \cite[Proposition~2.15]{DVHaWib1}.\end{proof}
 
If the matrix $A \in \GL_n(\C(z))$ and the $\ssp$-Picard-Vessiot extension $\cQ_S$ is a field, one can relate the difference Galois group of $\sq(Y)=AY $ over $\C(z)$ and the difference Galois group of the system over $\Kqp$ as follows.

\begin{lem}\label{lem:compgroupeetgroupconnected}
Let $A \in \GL_n(\C(z))$ and let $\cQ_S$ be a $\sp$-PV extension of ${\sq(Y)=AY}$ over $\Kqp$. If $\cQ_S$ is a field, then the difference Galois group of $\sq(Y)=AY$ over $\Kqp$ equals the connected component of the difference Galois group of the system over $\C(z)$.
\end{lem}
 \begin{proof}
 The proof is completely analogous to the last paragraph of the  proof of Proposition \ref{propo:existenceparamfieldsolutionqdiff} and relies on the fact that  $\Kqp$ has no non trivial finite $\sq$-field extensions. 
 \end{proof}

\subsection{Discrete Isomonodromy}
In $\sp$-Galois theory, one can define a notion of discrete isomonodromy as follows.

\begin{defi}\label{defi:isodiscrete}
Let $A \in \GL_n(\Kqp)$. The system $\sq(Y)=AY$  is called $\sp$-isomonodromic if there exist $B \in \GL_n(\Kqp)$ and $d \in \N^*$ such that 
\begin{equation}\label{eq:isodicrete}
\sq(B)A =\sp^d(A)B.
\end{equation}
\end{defi}
\begin{rem}\label{rem:isomondromyconnectionovwibmer}
Our definition is slightly more general than in \cite[Definition~2.54]{OvWib}, where $\sp$-isomonodromic means that there exists ${B \in \GL_n(\Kqp)}$  such that  $\sq(B)A =\sp(A)B$, i.e., $d=1$ in our definition. However, we can apply  most of the results of \cite{OvWib} by replacing $\sp$ by $\sp^d$.
\end{rem}

 We have the following Galoisian interpretation of $\sp$-isomonodromy.  We say that a $\sp$-subgroup   $H \subset \bold{\GL_{n,\k}}$ defined over a $\sp$-field $\k$ 
 is $\sp^d$-constant if, for all $\k$-$\sp$-algebras $S$, we have  $\sp^d(g)=g$, for all  $ g\in H(S)$. This is equivalent to the fact that
 the defining ideal  $\mathfrak{I}_H \subset \k\{X,\frac{1}{\det(X)}\}_{\sp}$ of $H \subset \bold{\GL_{n,\k}}$ contains the polynomial $\sp^d(X)-X$ (see Example \ref{exa:definingideal}).

 \begin{propo}\label{propo:caracgalspisomono}
 Let $A \in \GL_n(\Kqp)$ and let $\cQ_S$ be a $\sp$-PV  extension for $\sq(Y)=AY$ over $\Kqp$. Assume that $\cQ_S$ is a field. The system ${\sq(Y)=AY}$  is  $\sp$-isomonodromic over $\Kqp$ if and only if there exists a regular $\sp$-field extension $\widetilde{\C}$ of $\C$  and an integer $d \geq 1$ such that $\spGal(\cQ_S/\Kqp)_{\widetilde{\C}}$\footnote{The subscript $\widetilde{\C}$ means that we consider the  base change of $\spGal(\cQ_S/\Kqp)$ over $\C$ to $\widetilde{\C}$.} is conjugated to a $\sp^d$-constant subgroup of $\bold{\GL_{n,\widetilde{\C}}}$. 
 \end{propo}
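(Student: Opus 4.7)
I will prove both directions by manipulating a fundamental matrix of solutions and applying the parametrized Galois correspondence of Proposition~\ref{propo:spgaloiscorresptransdeg}, with a final descent argument based on Lemma~\ref{lem:descentsolutions}.

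For the forward direction, assume $\sq(B)A = \sp^d(A)B$ for some $B \in \GL_n(\C_E(z))$ and $d \geq 1$. Let $U \in \GL_n(\cQ_S)$ be a fundamental solution of $\sq(Y) = AY$. Both $\sp^d(U)$ and $BU$ satisfy $\sq(Y) = \sp^d(A)Y$ in $\cQ_S$, so $\sp^d(U) = BUD$ for some $D \in \GL_n(\cQ_S^{\sq}) = \GL_n(C_E)$. The key step is to absorb $D$ by a change of fundamental matrix. I choose a regular $\sp$-field extension $\widetilde{C_E}/C_E$ together with a matrix $Q \in \GL_n(\widetilde{C_E})$ satisfying $\sp^d(Q) = D^{-1}Q$; such an extension exists, for instance by taking any $\sp$-field of meromorphic functions in which the $\sp^d$-linear system $\sp^d(Y) = D^{-1}Y$ admits a fundamental solution (via a Praagman-type result applied to the shift $\sp^d$) and then passing to a sufficiently large regular subfield. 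Setting $V := UQ$ yields $\sq(V) = AV$ and $\sp^d(V) = \sp^d(U)\sp^d(Q) = BUD \cdot D^{-1}Q = BV$. For $\phi \in \spGal(\cQ_S/\C_E(z))_{\widetilde{C_E}}(R)$, write $\phi(V) = V[\phi]_V$; applying $\sp^d$ and using that $\phi$ commutes with $\sp$ and fixes $B$ forces $\sp^d([\phi]_V) = [\phi]_V$. Therefore, with respect to $V$, the base-changed Galois group is $\sp^d$-constant, and the conjugation relating the $\rho_U$- and $\rho_V$-representations is precisely $Q$.

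For the reverse direction, assume $P \in \GL_n(\widetilde{C_E})$ realises the conjugation that makes $\spGal(\cQ_S/\C_E(z))_{\widetilde{C_E}}$ a $\sp^d$-constant subgroup of $\GL_{n,\widetilde{C_E}}$. Take $V := UP$ as fundamental matrix and set $N := \sp^d(V)V^{-1}$. A direct computation from $\sq(V) = AV$ gives $\sq(N)A = \sp^d(A)N$, so $N$ is a matrix solution of the $\C_E(z)$-linear difference system $\sq(Y)A = \sp^d(A)Y$. Moreover, $\sp^d$-constancy of $[\phi]_V$ forces
$\phi(N) = \sp^d(V)\sp^d([\phi]_V)[\phi]_V^{-1}V^{-1} = \sp^d(V)V^{-1} = N$, so $N$ is fixed by the base-changed Galois group. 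By Proposition~\ref{propo:spgaloiscorresptransdeg} after base change to $\widetilde{C_E}$, we conclude that $N \in \GL_n(\K)$, where $\K := \operatorname{Frac}(\C_E(z) \otimes_{C_E} \widetilde{C_E})$.

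The last step is the descent from $\K$ down to $\C_E(z)$. Applied column-by-column, Lemma~\ref{lem:descentsolutions} identifies the $\widetilde{C_E}$-vector space $\V_{\widetilde{C_E}}$ of solutions of $\sq(Y)A = \sp^d(A)Y$ in $\K^{n \times n}$ with $\V_{C_E} \otimes_{C_E} \widetilde{C_E}$, where $\V_{C_E}$ denotes the corresponding space of solutions in $\C_E(z)^{n \times n}$. Writing $N = \sum_i N_i \otimes c_i$ with $N_i \in \V_{C_E}$ and $c_i \in \widetilde{C_E}$, the invertibility of $N$ ensures that the polynomial $\det\bigl(\sum_i y_i N_i\bigr) \in \C_E(z)[y_1,\ldots,y_k]$ is non-zero; since $C_E$ is an infinite field, one may specialise the $y_i$ to values $c'_i \in C_E$ so that $B := \sum_i c'_i N_i \in \GL_n(\C_E(z))$ satisfies $\sq(B)A = \sp^d(A)B$, witnessing $\sp$-isomonodromy. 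The main obstacle will be the construction of $\widetilde{C_E}$ in the forward direction—simultaneously regular over $C_E$ and containing the required $\sp^d$-period matrix $Q$—together with the precise use of base change in the Galois correspondence to justify that the invariants of the base-changed group remain in the base-changed field of invariants.
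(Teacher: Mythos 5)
Your overall strategy coincides with the paper's: both arguments reduce the equivalence to the relation $\sp^d(U)=BUD$ with $D\in\GL_n(C_E)$, absorb $D$ by passing to an extension of $C_E$ containing a fundamental solution of a constant-coefficient $\sp^d$-system, and, for the converse, descend from $\widetilde{C_E}$ to $C_E$ via Lemma~\ref{lem:descentsolutions} together with the openness of the condition $\det\neq 0$. The only structural difference is that the paper delegates the two Galoisian implications to Theorem 2.55 of \cite{OvWib} and merely supplements them (regularity of the extension in one direction, descent in the other), whereas you unpack that argument directly; your computations $\sp^d([\phi]_V)=[\phi]_V$ and $\phi(N)=N$ are correct, and your specialization step, choosing $(c'_1,\dots,c'_k)\in C_E^k$ off the zero locus of the nonzero polynomial $\det\bigl(\textstyle\sum_i y_iN_i\bigr)$, is exactly the paper's ``open condition'' remark made explicit.

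The one genuine gap is your construction of the regular extension $\widetilde{C_E}$ in the forward direction. Producing a meromorphic fundamental solution $Q$ of $\sp^d(Y)=D^{-1}Y$ by a Praagman-type argument and then ``passing to a sufficiently large regular subfield'' does not work: any subfield usable for your argument must contain the entries of $Q$, and nothing prevents those entries from being algebraic over $C_E$ without lying in $C_E$ (recall $C_E$ is far from algebraically closed), which destroys regularity; enlarging the field cannot repair this, since regularity requires $C_E$ to be relatively algebraically closed in the extension. The paper's fix is to take the \emph{generic} solution: $\widetilde{C_E}=C_E(X_0,\dots,X_{d-1})$ with $X_i$ matrices of indeterminates, $\sp(X_i)=X_{i+1}$ for $i<d-1$ and $\sp(X_{d-1})=D^{-1}X_0$; this is a purely transcendental, hence regular, $\sp$-field extension in which $X_0$ is the required period matrix. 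Note that regularity is not optional even for your own forward argument: you need Lemma~\ref{lem:baseextensionforsPV} (whose hypotheses include it, via Lemma~\ref{lem:extconst2}) to identify $\spGal(\cQ_S/\C_E(z))_{\widetilde{C_E}}$ with the Galois group of the base-changed Picard--Vessiot extension, so that $V=UQ$ lives in a $\ssp$-PV field and the relation $\phi(V)=V[\phi]_V$ makes sense --- a point you correctly flag at the end but which should be invoked explicitly in both directions.
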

 
 We refer to \cite[Theorem 2.55]{OvWib} for an analogous result in a different setting. \par 
Note that, since $\C$ is a $\sp$-inversive field, \cite[Corollary~A.14]{DVHaWib1} implies that any field extension of $\C$ is $\sp$-separable (see Section $ \ref{sec51}$).
 
 Before proving Proposition \ref{propo:caracgalspisomono}, we need an intermediate lemma about extension of $\sq$-constants. We have the following result:
\begin{lem} \label{lem:baseextensionforsPV}
 Let $\widetilde{\C}$ be a  $\sp$-field extension of $\C$ and let $\cQ_S/ \Kqp$ be a $\sp$-PV extension  for $\sq(Y)=AY$. 
 By Lemma \ref{lem:extconst2}, we may consider $\widetilde{\Kqp}$ (resp. $\widetilde{\cQ_S}$) the $\ssp$-field attached to $\Kqp\otimes_{\C} \widetilde{\C}$  (resp. $\cQ_S \otimes_{\C} \widetilde{\C}$).
Then $\widetilde{\cQ_S}$ is a $\ssp$-Picard-Vessiot extension for $\sq(Y)=AY$ over $\widetilde{\Kqp}$ and the $\sp$-Galois group $\widetilde{G}$ of $\widetilde{\cQ_S}/ \widetilde{\Kqp}$ is obtained from the $\sp$-Galois group $G$ of $\cQ_S/\Kqp$ by base extension, {\it i.e.}, ${\widetilde{G}=G_{\widetilde{\C}}}$.
\end{lem}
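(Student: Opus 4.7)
The plan is to verify directly that $\widetilde{\cQ_S}$ satisfies the three defining properties of a $\ssp$-PV extension (Definition \ref{defi:spPV}) and then to exploit the functoriality of the parametrized Galois group in Definition \ref{def:spgaloisgroup} to compare $\widetilde{G}$ and $G_{\widetilde{C_E}}$.

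First, I would note that Lemma \ref{lem:extconst2}, applied to $F=\C_E(z)$ and to $F=\cQ_S$ with $\k=F^{\sq}=C_E$, guarantees that $\C_E(z)\otimes_{C_E}\widetilde{C_E}$ and $\cQ_S\otimes_{C_E}\widetilde{C_E}$ are integral domains (regularity is preserved when we view $\widetilde{C_E}$ as a $\sq$-constants extension of $C_E$), and that the corresponding fraction fields $\widetilde{\C_E(z)}$ and $\widetilde{\cQ_S}$ have $\widetilde{C_E}$ as field of $\sq$-constants. So the third axiom in Definition \ref{defi:spPV} is immediate. The same fundamental solution matrix $U\in\GL_n(\cQ_S)$, viewed inside $\widetilde{\cQ_S}$, still satisfies $\sq(U)=AU$ and generates $\cQ_S$ as $\C_E(z)$-$\sp$-algebra, hence generates $\widetilde{\cQ_S}$ as $\widetilde{\C_E(z)}$-$\sp$-algebra after passing to fraction fields. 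This gives the first two axioms, so $\widetilde{\cQ_S}$ is a $\ssp$-PV extension for $\sq(Y)=AY$ over $\widetilde{\C_E(z)}$.

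For the identification of Galois groups, I would argue functorially. Fix a $\widetilde{C_E}$-$\sp$-algebra $B$. Then $B$ is also a $C_E$-$\sp$-algebra via $C_E\hookrightarrow\widetilde{C_E}$, and by associativity of the tensor product
\[
\widetilde{S}\otimes_{\widetilde{C_E}}B\;=\;(S\otimes_{C_E}\widetilde{C_E})\otimes_{\widetilde{C_E}}B\;\simeq\;S\otimes_{C_E}B,
\]
compatibly with $\sq$ and $\sp$, and similarly for $\widetilde{\C_E(z)}\otimes_{\widetilde{C_E}}B\simeq \C_E(z)\otimes_{C_E}B$. Therefore
\[
\spGal(\widetilde{\cQ_S}/\widetilde{\C_E(z)})(B)=\Aut^{\ssp}(\widetilde{S}\otimes_{\widetilde{C_E}}B/\widetilde{\C_E(z)}\otimes_{\widetilde{C_E}}B)=\spGal(\cQ_S/\C_E(z))(B),
\]
which coincides with $G_{\widetilde{C_E}}(B)$ by definition of base change of $\sp$-group schemes. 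The Yoneda lemma then yields $\widetilde{G}=G_{\widetilde{C_E}}$.

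The only delicate point is verifying that the isomorphism $\widetilde{S}\otimes_{\widetilde{C_E}}B\simeq S\otimes_{C_E}B$ is compatible with the full $\ssp$-structure, which rests on the fact that $\widetilde{C_E}$ is built as a field of $\sq$-constants, so that the $\sq$-action on $\widetilde{S}$ is the one induced from $\cQ_S$ and is unchanged by base change. The regularity assumption on $\widetilde{C_E}/C_E$ is used exclusively via Lemma \ref{lem:extconst2} to ensure that all the tensor products stay integral and that $\sp$ remains injective after extension, so that passing to fraction fields and taking $\ssp$-automorphisms commutes with base change. This compatibility is the main technical obstacle, but it is essentially formal once Lemma \ref{lem:extconst2} is granted.
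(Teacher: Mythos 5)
Your proposal is correct and follows essentially the same route as the paper: verify the Picard--Vessiot axioms via the computation of $\sq$-constants (Lemma \ref{lem:extconst2}), then identify the two Galois group functors on each $\widetilde{C_E}$-$\sp$-algebra $B$ through the associativity isomorphism $(S\otimes_{C_E}\widetilde{C_E})\otimes_{\widetilde{C_E}}B\simeq S\otimes_{C_E}B$ and the definition of base change. The only cosmetic difference is that the paper phrases the comparison in terms of the PV rings, noting explicitly that $\widetilde{R}$ is the localization of $R\otimes_{C_E}\widetilde{C_E}$ at the non-zero divisors of $\C_E(z)\otimes_{C_E}\widetilde{C_E}$, a point you treat implicitly.
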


\begin{proof}[Proof of Lemma \ref{lem:baseextensionforsPV}]
As $\widetilde{\cQ_S}^{\sq}=\widetilde{\C}=\widetilde{\Kqp}^{\sq}$, it is clear that $\widetilde{\cQ_S}|\widetilde{\Kqp}$ is a $\ssp$-Picard-Vessiot extension.
Let $S \subset \cQ_S$, (resp. $\widetilde{S}\subset\widetilde{\cQ_S}$), denotes the corresponding $\ssp$-Picard-Vessiot ring. Then $\widetilde{S}$ is obtained from $S\otimes_{\C}\widetilde{\C}$ by localizing at the multiplicatively closed set of all non-zero divisors of $\Kqp\otimes_{\C}\widetilde{\C}$. 
It follows that, for every $\widetilde{\C}$-$\sp$-algebra $B$,
$$
\begin{array}{lll}
G_{\widetilde{\C}}(B)&=&\Aut^{\ssp}(S\otimes_{\C} B|\Kqp\otimes_{\C} B)\\
&=&\Aut^{\ssp}\Big((S\otimes_{\C}\widetilde{\C})\otimes_{\widetilde{\C}}B\Big| (\Kqp\otimes_{\C}\widetilde{\C})\otimes_{\widetilde{\C}}B\Big),
\end{array}$$
{\it i.e.},
$$G_{\widetilde{\C}}(B)=
\Aut^{\ssp}(\widetilde{S}\otimes_{\widetilde{\C}}B|\widetilde{\Kqp}\otimes_{\widetilde{\C}}B)=\widetilde{G}(B).
$$
This ends the proof.
\end{proof}

\begin{proof}[Proof of Proposition \ref{propo:caracgalspisomono}]
 In \cite[Theorem 2.55]{OvWib}, it is proved that if  the system is $\sp$-isomonodromic  then there exists a $\sp$-field extension $\widetilde{\C}$ of $\C$  and an integer $d \geq 1$ such that $\spGal(\cQ_S/\Kqp)_{\widetilde{\C}}$ is conjugated to a $\sp^d$-constant subgroup of $\bold{\GL_{n,\widetilde{\C}}}$ (see Remark \ref{rem:isomondromyconnectionovwibmer}). In the proof of \cite[Theorem 2.55]{OvWib}, we note that any $\sp$-field extension $\widetilde{\C}$ of $\C$ that contains
a fundamental matrix of solutions of a given  equation of the form ${\sp^d(Y)=DY}$ for some given $D \in \GL_n(\C)$ is convenient. We claim that we can find
among these extensions a regular one. Indeed consider $\widetilde{\C}= \C(X_0,\dots,X_{d-1})$ where the $X_i$'s are $n \times n$-matrices of indeterminate. We can endow $\widetilde{\C}$ with a structure of $\sp$-extension of $\C$ by setting $\sp(X_i)=X_{i+1}$ for $i=0,\dots, d-1$
and $\sp(X_{d-1})= DX_0$.  Then, $X_0 \in \GL_n(\widetilde{\C})$ is a solution of $\sp^d(X_0)=DX_0$ and since $\widetilde{\C}$ is 
a pure extension of $\C$, it is also a regular extension.

Conversely, let us assume that there exists a regular $\sp$-field extension $\widetilde{\C}$ of $\C$  and an integer $d \geq 1$ such that $\spGal(\cQ_S/\Kqp)_{\widetilde{\C}}$ is conjugated to a $\sp^d$-constant subgroup of $\bold{\GL_{n,\widetilde{\C}}}$. Endow $\widetilde{\C}$ with a structure  of $\sq$-constants field and consider the $\ssp$-fields $\widetilde{\cQ_S}$ and $\widetilde{\Kqp}$ as in Lemma \ref{lem:baseextensionforsPV}. We find that the $\sp$-Galois group of  $\widetilde{\cQ_S}$ over $\widetilde{\Kqp}$
equals $\spGal(\cQ_S/\Kqp)_{\widetilde{\C}}$ and is thus conjugate to a $\sp^d$-constant group over $\widetilde{\C}$.
By \cite[Theorem 2.55]{OvWib}, the system $\sq(Y)=AY$ is $\sp$-isomonodromic over $\widetilde{\C (z^*)}$, {\it i.e.}, there exist $\widetilde{B} \in \GL_n(\widetilde{\C (z^*)})$ and $d \in \N^\times$ such that $\sq(\widetilde{B})=\sp^d(A)\widetilde{B}A^{-1}$.   By Lemma \ref{lem:descentsolutions}, the solution space in $\widetilde{\C(z^*)}^{n\times n}$ of the $q$-difference equation $\sq(Y)= \sp^d(A)YA^{-1}$ is generated as a $\widetilde{\C}$-vector space by the solution space of the equation in $\Kqp^{n\times n}$. Since the condition $\det(Y) \neq 0$ is an open condition, there exists
$B \in \GL_n(\Kqp)$ such that $\sq(B)=\sp^d(A)BA^{-1}$ and the system $\sq(Y)=AY$ is $\sp$-isomonodromic over $\Kqp$.
\end{proof}

\subsection{Transcendence results}\label{sec53}
 Let $A \in \GL_n(\Kqp)$ and consider 
\begin{equation}\label{eq11}
\sq Y=AY.
\end{equation}
Let $\cQ_S$ be a  $\ssp$-Picard-Vessiot extension of $\sq(Y)=AY$ over $\Kqp$. Let $U \in \GL_{n}(\cQ_S)$ be a fundamental matrix of solutions of the system \eqref{eq11}, and let $\Gal^{\sp}(\cQ_S/\Kqp)$ be the $\sp$-Galois group of $\cQ_S$ identified  with a $\sp$-subgroup of $\GL_{n,\C}$ via the faithful representation attached to the fundamental matrix of solutions $U$.

Let $\bold{\SL_{n,\C}}$ (when $n\geq 2$), $\bold{\mathrm{SO}_{n,\C}}$ (when $n\geq 3$) and $\bold{\mathrm{Sp}_{n,\C}}$ (when $n$ is even) be  the
$\sp$-algebraic groups over $\C$ corresponding respectively to the special linear group, the special orthogonal group and the symplectic group (see
Section \ref{secA2}).

\begin{propo}
Assume that $n\geq 2$. Assume that $\cQ_S$ is a field. Let $u=(u_{1},\dots,u_{n})$ be a row (resp. column) vector  of $U$. If there exists
${\tilde{C} \in \GL_n(\C)}$ such that the image of the $\sp$-Galois group by the representation $\rho_{U\tilde{C}}$ associated to the fundamental matrix of solutions $U\tilde{C}$ contains
\begin{itemize}
\item $\bold{\SL_{n,\C}}$ or $\bold{\Sp_{n,\C}}$, then $u_{1},\dots,u_{n}$ are $\sp$-algebraically independent over $\Kqp$;
\item $\bold{\SO_{n,\C}}$,  then any  $n-1$  distinct elements among the $u_i$'s are   $\sp$-algebraically independent over $\Kqp$.
\end{itemize}
\end{propo}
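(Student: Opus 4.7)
The plan is to follow, step by step, the strategy used for the continuous-parameter analogue Proposition \ref{propo:transcontinu}, replacing the differential operator $\delta$ by the difference operator $\sp$ throughout, and invoking the $\ssp$-Galois theoretic tools from \cite{OvWib} in place of those of \cite{HS}. First, up to replacing $U$ by $U\tilde C$ (which only conjugates the Galois group scheme), I will assume that the image of $\spGal(\cQ_S/\C_E(z))$ under $\rho_U$ contains $\widetilde H=\tilde C H\tilde C^{-1}$, where $H$ is one of $\SL_n(C_E)$, $\SO_n(C_E)$, $\Sp_n(C_E)$. By the $\ssp$-Galois correspondence (Proposition \ref{propo:spgaloiscorresptransdeg} combined with the full Galois correspondence \cite[Theorem~2.52]{OvWib}), the subfield $\K_0=\cQ_S^{\widetilde H}$ is a $\ssp$-subfield of $\cQ_S$ with $\K_0^{\sq}=C_E$, the field $\cQ_S$ is a $\ssp$-PV extension of $\K_0$ for the same system, and the $\ssp$-Galois group of $\cQ_S/\K_0$ equals $\widetilde H$ in the representation attached to $U$.

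Next, I will use the torsor theorem for $\ssp$-PV rings (the discrete analogue of \cite[Prop.~6.24]{HS}): the $\K_0$-$\ssp$-algebra $\widetilde S=\K_0\{U,\tfrac1{\det(U)}\}_{\sp}$ is the coordinate ring of a $\widetilde H$-torsor over $\K_0$. Writing $\widetilde S=\K_0\{X,\tfrac1{\det(X)}\}_{\sp}/\mathfrak I$ for a suitable $\sp$-ideal $\mathfrak I$, the defining equations of $\widetilde H$ translate into
\begin{itemize}
\item $\mathfrak I=\{\det(X)-f\}_{\sp}$ for some $f\in\K_0$ if $H=\SL_n(C_E)$;
\item $\mathfrak I=\{X\tilde C\tilde C^{t}X^t-F,\ \det(X)-g\}_{\sp}$ with $F\in\GL_n(\K_0)$, $g\in\K_0$ if $H=\SO_n(C_E)$;
\item $\mathfrak I=\{X\tilde CJ\tilde C^{t}X^t-F,\ \det(X)-g\}_{\sp}$ with $F\in\GL_n(\K_0)$, $g\in\K_0$ if $H=\Sp_n(C_E)$.
\end{itemize}

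Now, to conclude, it suffices to show that the intersection of $\mathfrak I$ with $\K_0\{x_1,\dots,x_n\}_{\sp}$ (resp.\ $\K_0\{x_1,\dots,x_{n-1}\}_{\sp}$ for $\SO_n$) is reduced to $\{0\}$, where $(x_1,\dots,x_n)^t$ denotes the row (resp.\ column) of $\sp$-indeterminates of $X$ corresponding to $u$. For $\SL_n(C_E)$, this is clear since $\det(X)-f$ involves every row and column. For $\SO_n(C_E)$ and $\Sp_n(C_E)$, I will mimic the linear-algebra arguments of Proposition \ref{propo:transcontinu}: pass to an algebraic closure $\overline{\K_0}$ (the descent to $\K_0$ is by flatness, exactly as in the differential case), factor $U_0=L_0 Q$ with $Q\in SO_D$ (resp.\ $L_0 S$ with $S\in Sp_D$) using the Gram--Schmidt process (resp.\ the extension of any non-zero vector to a symplectic basis), change variables to $Y=L_0^{-1}X$, and observe that the corresponding $\sp$-polynomial $\tilde L(y_1,\dots,y_{n-1})$ (resp.\ $\tilde L(y_1,\dots,y_n)$) must vanish on the orbit, forcing $\tilde L=0$.

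The main obstacle I anticipate is the verification that the torsor structure theorem really holds in the $\ssp$-setting with the base field $C_E$ that is only $\sp$-inversive but not $\sp$-closed, so that the description of $\mathfrak I$ given above is valid. In the continuous case \cite[Prop.~6.24]{HS}, the assumption that $\k$ is $\delta$-closed is crucial; here one has to rely on \cite[Theorem~2.52]{OvWib} and the base-change/regular-extension machinery (Lemma \ref{lem:extconst2} and Lemma \ref{lem:baseextensionforsPV}) to reduce, if needed, to a situation where the torsor description is unambiguous. Once this point is settled, the remainder of the proof is a purely algebraic transcription of the $\delta$-case arguments, since Gram--Schmidt and symplectic basis completion are operator-free statements that apply verbatim in the $\sp$-polynomial ring $\K_0\{X\}_{\sp}$.
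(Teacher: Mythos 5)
Your proposal follows the paper's own proof essentially verbatim: conjugate to reduce to $\widetilde H=\tilde C H\tilde C^{-1}$, apply the $\ssp$-Galois correspondence of \cite[Theorem 2.52]{OvWib} to get $\K_0=\cQ_S^{\widetilde H}$, invoke the torsor structure (the paper cites \cite[Lemma 2.49]{OvWib} for the point you flag as the main obstacle) to describe $\mathfrak I$ in the three cases, and then transcribe the intersection/Gram--Schmidt/symplectic-basis arguments of Proposition \ref{propo:transcontinu}. This is exactly the paper's argument, which itself concludes with ``the rest of the proof follows exactly the lines of Proposition \ref{propo:transcontinu}.''
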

\begin{proof}
The proof is a discrete analogue of the proof of Proposition \ref{propo:transcontinu}. We just  explain the strategy of the proof in the $\bold{\SL_{n,\C}}$-case to show where one has to adapt the proof to the discrete parameter case. Let $X=(X_{i,j})_{1 \leq i,j \leq n}$ be $\sp$-indeterminates. Let $\mathfrak{I}$ be the kernel of the unique morphism of $\K$-$\sp$-algebras 
$
\K\{X, \frac{1}{\det(X)} \}_{\sp} \rightarrow S=\C(z^*)\{U, \frac{1}{\det (U)} \}_\sp
$ 
such that $X \mapsto U$. We denote by $(x_{1},\ldots,x_{n})=(X_{1,1},\ldots,X_{1,n})$ the first row of $X$.  The $\sp$-algebraic relations with coefficients in $\Kqp$ between $u_{1},\ldots,u_{n}$ correspond to the elements of $\mathfrak{I} \cap  \K\{x_1,\dots,x_{n}\}_{\sp}$. So everything amounts to prove that $\mathfrak{I} \cap  \K\{x_1,\dots,x_{n}\}_{\sp} = \{0\}$. In order to proving this, we will relate $\mathfrak{I}$ to the ideal defining the $\sp$-algebraic group $\Gal^{\sp}(\cQ_S/\Kqp)$. Such a relation follows from the fact that the $\sp$-PV ring $S$ is the coordinate ring of a  $\Gal^\sp(\cQ_S/\Kqp)$-torsor over $\Kqp$.  

We shall now give the details of the proof, still in the $\bold{\SL_{n,\C}}$-case. As above, we let $\mathfrak{I}$ be the kernel of the unique morphism of $\Kqp$-$\sp$-algebras 
$
\varphi : \Kqp\{X, \frac{1}{\det(X)} \}_{\sp} \rightarrow S
$ 
such that $X \mapsto U$ and we denote by $\mathcal{V}$ the $\sp$-algebraic variety over $\Kqp$ defined by $\mathfrak{I}$.  On the other hand, we let $G$ be the image of $\Gal^\sp(\cQ_S/\Kqp)$ by the representation $\rho_{U}$, we let ${\mathfrak{L}}$ be the $\sp$-ideal of $\C\{X, \frac{1}{\det(X)} \}_{\sp}$ of the equations of $G$ and we let $\mathcal{G}$ be the $\sp$-algebraic variety over $\Kqp$ defined by ${\mathfrak{L}}$; in other words, $\mathcal{G}$ is the $\sp$-algebraic group   over $\Kqp$ obtained from $G$ by extension of scalars from $\C$ to $\Kqp$. Both $\mathcal{V}$ and $\mathcal{G}$ can be seen in $\bold{\GL_{n,\Kqp}}$.   The following map is well-defined and makes $\mathcal{V}$ a $\mathcal{G}$-torsor over $\Kqp$, see \cite[Lemmas 2.49 and 2.51]{OvWib}: 
\begin{eqnarray*}
 \mathcal{V} \times_{\K} \mathcal{G} & \rightarrow & \mathcal{V} \times_{\K} \mathcal{V} \\ 
 (v,M) & \mapsto & (v,vM).
 \end{eqnarray*}
The $\ssp$-Picard-Vessiot extension $\cQ_S$ is a $\sp$-field extension of $\Kqp$. The injection of $S$ into $\cQ_S$ yields to a point of  $\mathcal{V}(\cQ_S)$. This proves that the torsor $\mathcal{V}$ is trivial over $\cQ_S$. 
There exists  $Z_{0} \in \mathcal{V}(\cQ_S)$ such that the  $\sp$-ideals  $(\mathfrak{I})$ and $({\mathfrak{L}})$ of $\cQ_S\{X, \frac{1}{\det(X)} \}_{\d} $ defining $\mathcal{V}_{\cQ_S}$ and $\mathcal{G}_{\cQ_S}$ satisfy
$$(\mathfrak{I})= \{ P(Z_0^{-1}X) \ \vert \ P \in ({\mathfrak{L})}\}.$$

Since the image of   $\Gal^{\sp}(\cQ_S/\Kqp)$ by the representation $\rho_{U\tilde{C}}$  contains $\bold{\SL_{n,\C}}$, we see that $G$ contains ${H=\tilde{C} \bold{\SL_{n,\C}} \tilde{C}^{-1}(= \bold{\SL_{n,\C}}))}$.  Hence, $(\mathfrak{I})$  is contained in the  $\sp$-ideal $\{\det(X)-\det(Z_0)\}$ of {$\cQ_S\{X, \frac{1}{\det(X)} \}_{\sp}$} generated by $\det(X)-\det(Z_0)$ (see \cite[Example 12.12]{Wibaffinediffgroup}).

 We claim the equality of ideals
$$
\{\det(X)-\det(Z_0)\}_{\sp} \cap  \cQ_S\{x_1,\dots,x_{n}\}_{\sp}  =\{0\}.
$$
Indeed, let us consider $${P=P(X)=P(x_{1},\ldots,x_{n}) \in \{\det(X)-\det(Z_0)\} \cap  \cQ_S\{x_1,\dots,x_{n}\}_{\sp}}.$$ 
Let $\widetilde{\K}$ be any $\sp$-field extension of $\cQ_S$.
For any non zero vector ${(a_{1},\ldots,a_{n}) \in \widetilde{\K}^{n} \setminus \{(0,\ldots,0)\}}$, there exists a matrix $A \in M_{n}(\widetilde{\K})$ with first row $(a_{1},\ldots,a_{n})$ such that ${\det(A)=\det(Z_{0})}$, so ${P(a_{1},\ldots,a_{n})=P(A)=0}$ because $P \in \{\det(X)-\det(Z_0)\}_{\sp}$. Therefore, $P$ vanishes on  ${\widetilde{\K}^{n} \setminus \{(0,\ldots,0)\}}$ and, hence, $P=0$  by \cite[Theorem 2.6.5]{Levin}.
We now have the desired result because  

\begin{multline*}
\mathfrak{I} \cap \Kqp\{x_1,\dots,x_{n}\}_{\sp} \subset (\mathfrak{I}) \cap  \cQ_S\{x_1,\dots,x_{n}\}_{\sp}\\\subset \{\det(X)-\det(Z_0)\}_{\sp} \cap  \cQ_S\{x_1,\dots,x_{n}\}_{\sp}  =\{0\}.
\end{multline*}
\end{proof}

\section{$q$-difference equations of rank one}\label{sec6}

We recall that $q,\qpr\in \C^{\times}$ with $|q|,|\qpr|\neq 1$. From now, we assume that $q$ and $\qpr$ are multiplicatively independent, that is for any $\ell,m\in \Z$, $q^{\ell}\qpr^{m}=1 \Rightarrow \ell=m=0$. In other words, $\log(q/ \qpr)\notin \Q$. The goal of the section is to compute the $\sp$-Galois group of order one equation.
For any $a(z) \in \C(z)$, we denote by $\divi a(z)$ the divisor of $a(z)$ on $\C^{\times}$, {\it i.e.}, 
$$
\divi a(z) = \sum_{\alpha \in \C^{\times}} v_{\alpha}(a(z))\crochets{\alpha} 
$$
where $v_{\alpha}(a(z))$ denotes the valuation of $a(z)$ at $\alpha$.  
Let $\pi : \C^{\times} \rightarrow \C^{\times}/q^{\Z}$ be the natural projection. For any $a(z) \in \C(z)^{\times}$, we set   
$$
\qdivi a(z) = \sum_{\alpha \in \C^{\times}} v_{\alpha}(a(z))\crochets{\pi(\alpha)}. 
$$
The proof of the following lemma is inspired by the proof of \cite[Lemma~2.1]{VdPS97}.

\begin{lem}[Lemme 3.5 in \cite{hardouin2008hypertranscendance}]   \label{lem qdivisor}
Consider $a(z) \in \C(z)^{\times}$. Then, the following properties are equivalent:
\begin{itemize}
\item[(i)] there exist $c \in \C^{\times}$, $m \in \Z$ and $b(z) \in \C(z)^{\times}$ such that ${a(z)=cz^{m}\frac{b(qz)}{b(z)}}$; 
\item[(ii)] $\qdivi a(z)=0$.
\end{itemize}
\end{lem}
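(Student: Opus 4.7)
The plan is to prove the two implications by a direct divisor-theoretic computation.

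For the direction (i) $\Rightarrow$ (ii), the key observation is that the map $\alpha \mapsto q\alpha$ preserves $q$-orbits, i.e.\ $\pi(q\alpha)=\pi(\alpha)$. If $a(z)=cz^{m}\frac{b(qz)}{b(z)}$, then since $cz^{m}$ has no zeros or poles on $\C^{\times}$, we have $\divi a(z)=\divi b(qz)-\divi b(z)$ on $\C^{\times}$. Using $v_{\alpha}(b(qz))=v_{q\alpha}(b(z))$, one rewrites
\[
\qdivi a(z)=\sum_{\alpha\in\C^{\times}}\bigl(v_{q\alpha}(b)-v_{\alpha}(b)\bigr)[\pi(\alpha)],
\]
and the change of variable $\beta=q\alpha$ in the first sum (which leaves $\pi(\alpha)=\pi(\beta)$ unchanged) shows this equals $0$.

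For the direction (ii) $\Rightarrow$ (i), I would build $b(z)$ by solving a finite family of one-dimensional recursions, one per $q$-orbit supporting $\divi a$. Since $\divi a$ has finite support, only finitely many $q$-orbits $O_{1},\dots,O_{N}$ carry nontrivial multiplicity; choose a representative $\alpha_{i}$ of each $O_{i}$ and set $n_{i,k}=v_{q^{k}\alpha_{i}}(a)$ for $k\in\Z$. The hypothesis $\qdivi a(z)=0$ reads $\sum_{k\in\Z}n_{i,k}=0$ for every $i$. Defining
\[
m_{i,k}=\sum_{j<k}n_{i,j}\qquad(i=1,\dots,N,\; k\in\Z),
\]
one obtains sequences with finite support (they are zero for $k\ll 0$ by finite support of $n_{i,\bullet}$, and zero for $k\gg 0$ thanks to the vanishing of the total sum) satisfying $m_{i,k+1}-m_{i,k}=n_{i,k}$. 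The rational function
\[
b(z)=\prod_{i=1}^{N}\prod_{k\in\Z}(z-q^{k}\alpha_{i})^{m_{i,k}}
\]
is therefore well defined, and an orbitwise check using $v_{\alpha}(b(qz))=v_{q\alpha}(b)$ gives $\divi\!\bigl(b(qz)/b(z)\bigr)=\divi a(z)$ on $\C^{\times}$.

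Consequently the quotient $a(z)\cdot b(z)/b(qz)\in\C(z)^{\times}$ has empty divisor on $\C^{\times}$, so its only zeros or poles lie at $0$ or $\infty$; hence it is of the form $cz^{m}$ with $c\in\C^{\times}$ and $m\in\Z$, which yields the desired factorization. The only mildly delicate point is the bookkeeping of the orbit-wise sums to guarantee that $(m_{i,k})_{k}$ has finite support, but this is precisely where the hypothesis $\qdivi a(z)=0$ is used, so no genuine obstacle arises.
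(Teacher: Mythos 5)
Your proof is correct and follows essentially the same route as the paper: the direction (ii) $\Rightarrow$ (i) is handled orbit by orbit via the telescoping partial sums of the multiplicities along each $q^{\Z}$-orbit, which are finitely supported exactly because the total multiplicity on each orbit vanishes; your uniform formula $m_{i,k}=\sum_{j<k}n_{i,j}$ is just a cleaner packaging of the explicit product the paper writes out. The forward direction, which the paper leaves to the reader, is correctly carried out by your change of variable $\beta=q\alpha$.
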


\begin{propo}\label{prop:aetb}
Let $a(z), b(z) \in \C(z)^{\times}$ be such that 
$$
a(z)^{k_{0}} a(\qpr z)^{k_{1}} \cdots a(\qpr^{r} z)^{k_{r}}= \frac{b(qz)}{b(z)}
$$
for some $k_{0},\ldots,k_{r} \in \Z$ with $k_{0}k_{r} \neq 0$. Then, $\qdivi a(z)=0$, {\it i.e.}, in virtue of Lemma~\ref{lem qdivisor}, there exist $c \in \C^{\times}$, $m \in \Z$ and $b_{1}(z) \in \C(z)^{\times}$ such that ${a(z)=cz^{m}\frac{b_{1}(qz)}{b_{1}(z)}}$. 
\end{propo}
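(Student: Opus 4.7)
The plan is to apply $\qdivi$ to both sides of the hypothesis and to reduce the claim to the fact that a nonzero element of the Laurent polynomial ring $\Z[T,T^{-1}]$ is not a zero divisor.

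First I would observe that $\qdivi : \C(z)^\times \to \Z[\C^\times/q^\Z]$ is a group homomorphism, because valuations are additive under products, and that $\qdivi(b(qz)/b(z)) = 0$ for every $b \in \C(z)^\times$, since a zero or pole of $b$ at $\beta$ becomes one of $b(qz)$ at $\beta/q$, and $\pi(\beta/q) = \pi(\beta)$ in $\C^\times/q^\Z$. Consequently, applying $\qdivi$ to the given identity collapses the right-hand side to zero and yields
\begin{equation*}
\sum_{j=0}^{r} k_j\, \qdivi a(\mathbf{q}'^j z) = 0 \quad \text{in } \Z[\C^\times/q^\Z].
\end{equation*}

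The next step is to identify how the $\mathbf{q}'$-shift acts on $q$-divisors. A direct computation of valuations gives $v_\alpha(f(\mathbf{q}'z)) = v_{\mathbf{q}'\alpha}(f)$, so the group automorphism $T$ of $E_q := \C^\times/q^\Z$ induced by $\alpha \mapsto \alpha/\mathbf{q}'$ satisfies $\qdivi a(\mathbf{q}'^j z) = T^j_*\, D$, where $D := \qdivi a(z)$ and $T_*$ denotes the $\Z$-linear action on divisors. Setting $P(T) := k_0 + k_1 T + \cdots + k_r T^r \in \Z[T,T^{-1}]$, the previous identity becomes $P(T)\cdot D = 0$ in $\Z[E_q]$.

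The central point is that $T$ has infinite order on $E_q$. Indeed, $T^n = \operatorname{id}_{E_q}$ would mean $\mathbf{q}'^n \in q^\Z$, which combined with the standing multiplicative independence of $|q|$ and $|\mathbf{q}'|$ forces $n=0$. Therefore every $\langle T\rangle$-orbit $\mathcal{O}$ in $E_q$ is infinite, and $\Z[\mathcal{O}] \simeq \Z[T,T^{-1}]$ as $\Z[T,T^{-1}]$-modules. Decomposing the finitely supported divisor $D$ according to the (finitely many) orbits meeting its support, and using that $P(T)$ is a \emph{nonzero} element of the integral domain $\Z[T,T^{-1}]$ (because $k_0 k_r \neq 0$), I conclude that $P(T)$ acts injectively on each orbit component, so $D = 0$. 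An application of Lemma~\ref{lem qdivisor} then gives the proposition.

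The only nontrivial ingredient is the use of the multiplicative independence of $|q|$ and $|\mathbf{q}'|$ to rule out $\mathbf{q}'^n \in q^\Z$; once this is in hand, the rest is purely formal, so I do not anticipate any serious obstacle.
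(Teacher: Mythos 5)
Your proof is correct and is essentially the paper's argument in a cleaner algebraic packaging: the paper likewise applies $\qdivi$ to both sides, restricts attention to a single $\mathbf{q}'^{\Z}$-orbit in $\C^{\times}/q^{\Z}$ (infinite by the multiplicative independence of $|q|$ and $|\mathbf{q}'|$), and then extracts the coefficient of the extremal point $\mathbf{q}'^{-r}\zeta_{i_1}$ to conclude $k_r n_{i_1}=0$ --- which is exactly the leading-coefficient computation underlying your claim that the nonzero element $P(T)$ of the domain $\Z[T,T^{-1}]$ acts injectively on each orbit component. Nothing is missing.
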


\begin{proof}
Assume to the contrary that $\qdivi a(z)\neq 0$. 
We set 
$$
\qdivi a(z)=\sum_{i=1}^{m} n_{i}\crochets{\zeta_{i}}
$$
where the $\zeta_{i}$ are pairwise distinct elements of $\C^{\times}/q^{\Z}$ and the $n_{i}$ are non zero integers. 
We have 
\begin{equation}\label{bla}
0=\qdivi \frac{b(qz)}{b(z)} = \qdivi a(z)^{k_{0}} a(\qpr z)^{k_{1}} \cdots a(\qpr^{r}z)^{k_{r}}
=
\sum_{j=0}^{r} k_{j} \sum_{i=1}^{m} n_{i} \crochets{\qpr^{-j} \zeta_{i}}. 
\end{equation}
Let 
$$
I=\{i \in \{1,\ldots,m\} \ \vert \ \zeta_{i} \in \qpr^{\Z}\zeta_{1}\}.
$$ 
Let $i_{1},\ldots,i_{s}$ be pairwise distinct integers such that $I=\{i_{1},\ldots,i_{s}\}$. Up to renumbering, we can assume that 
$$
\zeta_{i_{1}} \prec \cdots \prec \zeta_{i_{s}}
$$
where, for any $x,y \in \C^{\times}/q^{\Z}$, $x \prec y$ means that $y = \qpr^{k} x$ for some $k \in \N^{*}$. 
Then, we have $\qpr^{-r} \zeta_{i_{1}} \prec \qpr^{-j} \zeta_{i_{k}}$ for all $j \in \{0,\ldots,r\}$ and $k \in \{1,\ldots,s\}$ such that $(j,k)\neq (r,1)$.  In particular, $\qpr^{-r} \zeta_{i_{1}} \neq \qpr^{-j} \zeta_{i}$ for all $j \in \{0,\ldots,r\}$ and $i \in I$ such that $(j,i)\neq (r,i_{1})$ (indeed, if $x \prec y$ then $x \neq y$ because $q$ and $\qpr$ are multiplicatively independent).

Moreover, for $j \in \{0,\ldots,r\}$ and $i \in \{1,\ldots,m\}\setminus I$, $\qpr^{-r} \zeta_{i_{1}}$ and $\qpr^{-j} \zeta_{i}$ are not in the same $\qpr^{\Z}$-orbit and hence are not equal. 

So, we have proved that $\qpr^{-r} \zeta_{i_{1}} \neq \qpr^{-j} \zeta_{i}$ for all $j \in \{0,\ldots,r\}$ and ${i \in \{1,\ldots,m\}}$ such that $(j,i)\neq (r,i_{1})$. Therefore, the coefficient of $\crochets{\qpr^{-r} \zeta_{i_{1}}}$ in equation (\ref{bla}) is equal to $0$, {\it i.e.}, $k_{r}n_{i_{1}}=0$, whence a contradiction. 
\end{proof}

The  following proposition gives an example of $\sp$-isomonodromic equation of rank one.

\begin{propo}\label{propo:sprang1}
Let $a(z)\in \C(z)^{\times}$. Let $\cQ_S$ be a  $\ssp$-Picard-Vessiot extension for $\sq(y)= a(z) y$ over $\C(z^*)$. Let $u \in \cQ_s$ be a non zero solution of 
$\sq(y) = a(z) y $. Let $\cQ_S=\Kqp\langle u \rangle_{\sp}$. Then, the following statements are equivalent:
\begin{enumerate}
\item  $u$ and all its transforms with respect to $\sp$ are algebraically dependent over $\Kqp$,
\item there exist $c \in \C^{\times}$, $m \in \Z$ and $b(z) \in \C(z)^{\times}$ such that ${a(z)=cz^{m}\frac{b(qz)}{b(z)}}$,

\item the  group $\spGal(\cQ_S/\Kqp)$ can be embedded as a subgroup of ${H \subset \bold{\GL_{1,\C}}}$ with $H$ a $\sp$-algebraic subgroup defined by 
$$H(B)=\left\{\lambda \in \GL_{1,\C}(B) \left| \sp\left(\frac{\sp(\lambda)}{\lambda} \right) =\frac{\sp(\lambda)}{\lambda}\right\}\right.$$ for any $ B \in \Alg_{\C,\sp}$.
\end{enumerate}
Moreover, the following statements are equivalent:
\begin{trivlist}
 \item (a) there exist $c \in \C^{\times}$and $b(z) \in \C(z)^{\times}$ such that $a(z)=c\frac{b(qz)}{b(z)}$,
\item (b) the  group $\spGal(\cQ_S/\Kqp)$ is $\sp$-constant.
\end{trivlist}
\end{propo}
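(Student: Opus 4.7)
The plan is to prove the three-way equivalence via the cycle $(2)\Rightarrow(1)$, $(2)\Rightarrow(3)\Rightarrow(1)$, $(1)\Rightarrow(2)$, and then deduce $(a)\Leftrightarrow(b)$ as a byproduct.

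For $(2)\Rightarrow(1)$ and $(2)\Rightarrow(3)$, I would set $v:=u/b$, so that $\sq(v)=cz^{m}v$, and define $w:=\sp(v)/v$. A short computation gives $\sq(w)=(\mathbf{q}')^{m}w$, so $\sp(w)/w$ is $\sq$-invariant, hence equal to some $\gamma\in C_{E}$. The identity $\sp(w)=\gamma w$ unfolds to $v\,\sp^{2}(v)=\gamma\,\sp(v)^{2}$; substituting back $v=u/b$ yields an explicit polynomial relation between $u$, $\sp(u)$ and $\sp^{2}(u)$ with coefficients in $\C_{E}(z)$, establishing $(1)$. For $(3)$, let $g\in\spGal(\cQ_{S}/\C_{E}(z))(B)$ with $g(u)=[g]u$; then $g(w)=(\sp([g])/[g])\,w$, and applying $g$ to $\sp(w)=\gamma w$ (using that $g$ commutes with $\sp$ and fixes $\gamma\in C_{E}\subset \C_{E}(z)$) forces $\sp(\sp([g])/[g])=\sp([g])/[g]$, which is precisely the defining equation of $H$; hence $\spGal\subset H$.

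For $(3)\Rightarrow(1)$, the group $H$ is cut out in $\GL_{1,C_{E}}$ by the nontrivial $\sp$-polynomial $g\,\sp^{2}(g)-\sp(g)^{2}$, so $\spdim(H)=0$; the inclusion $\spGal\subset H$ thus forces $\spdim(\spGal)=0$, and Proposition~\ref{propo:spgaloiscorresptransdeg} gives $\sptrdeg(\cQ_{S}/\C_{E}(z))=0$. Since $\cQ_{S}=\C_{E}(z)\langle u\rangle_{\sp}$, this yields $(1)$. For $(1)\Rightarrow(2)$, since $\spGal$ has $\sp$-dimension zero in $\GL_{1,C_{E}}$, a Cassidy-type structural description of $\sp$-algebraic subgroups of $\mathbb{G}_{m}$ shows that it is contained in a subgroup defined by a nontrivial monomial relation $\prod_{i=0}^{r}\sp^{i}(g)^{k_{i}}=1$ with $k_{0}k_{r}\neq 0$. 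The $\sp$-Galois correspondence then gives $\prod_{i=0}^{r}\sp^{i}(u)^{k_{i}}=f$ for some $f\in \C_{E}(z)^{\times}$, and applying $\sq$ yields $\prod_{i=0}^{r}\sp^{i}(a)^{k_{i}}=\sq(f)/f$, with the left-hand side in $\C(z)^{\times}$. Adapting the divisor argument of Proposition~\ref{prop:aetb} to the free abelian group on $\C^{\times}/q^{\Z}$, and using the multiplicative independence of $|q|$ and $|\mathbf{q}'|$ to isolate an extremal $\mathbf{q}'$-class, one concludes $\qdivi a=0$, whence $(2)$ by Lemma~\ref{lem qdivisor}. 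The main obstacle here is the presence of $C_{E}$-factors in $f$: a priori its divisor on $\C^{\times}$ has infinite support, and one must verify that only the ``$\C(z)$-part'' contributes to the $\qdivi$-computation.

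Finally, for $(a)\Leftrightarrow(b)$: if $(a)$ holds, then $m=0$, so $w$ satisfies $\sq(w)=w$ and lies in $C_{E}\subset \C_{E}(z)$; hence $\sp(u)/u\in \C_{E}(z)$ is Galois-fixed, which forces $\sp([g])=[g]$ for every $g$, giving $(b)$. Conversely, $(b)$ implies $\spGal\subset H$ and thus $(2)$; inspection of the proof of $(2)\Rightarrow(3)$ shows that $\spGal$ is $\sp$-constant precisely when $w\in \C_{E}(z)$. Since $\sq(w)=(\mathbf{q}')^{m}w$ and a comparison of leading $z$-coefficients for any $h\in \C_{E}(z)^{\times}$ shows that $\sq(h)/h\in\C^{\times}$ only when it belongs to $q^{\Z}$, the multiplicative independence of $|q|$ and $|\mathbf{q}'|$ then forces $m=0$, yielding $(a)$.
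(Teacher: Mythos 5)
Your cycle of implications is sound in outline and, for most steps, coincides with the paper's own argument. Your quantity $w=\sp(v)/v$ with $v=u/b$, the computation $\sq(w)=(\mathbf{q}')^{m}w$, and the conclusion $\sp(w)/w=\gamma\in C_{E}$ are exactly the paper's identity $\frac{\sp(\sp(u)/u)}{\sp(u)/u}\cdot\frac{h}{\sp(h)}=d\in C_{E}$ with $h=\sp(b)/b$; your $(3)\Rightarrow(1)$ via $\sp$-dimension is the paper's appeal to Proposition \ref{propo:spgaloiscorresptransdeg}; and your leading-coefficient argument forcing $m=0$ is the same device as the paper's Lemma \ref{lem1} (one caveat: elements of $\C_E(z)$ are not meromorphic at $0$, so ``comparison of leading $z$-coefficients'' must be justified by expanding in $\C_E((z))$ and using that the leading coefficient $y_{\nu}$ lies in some $C_{E_s}$, whence $\sq^{s}(y_{\nu})=y_{\nu}$, before invoking multiplicative independence of $|q|$ and $|\mathbf{q}'|$; this is routine but should be said). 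Your direct proof of $(a)\Rightarrow(b)$ through Galois-fixedness of $\sp(u)/u$ is a nice shortcut compared with the paper's citation of Proposition \ref{propo:caracgalspisomono}, and your explicit $(2)\Rightarrow(1)$ relation $v\,\sp^{2}(v)=\gamma\,\sp(v)^{2}$ is a harmless addition.

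The genuine gap is in $(1)\Rightarrow(2)$, precisely at the point you flag but do not resolve. The Galois correspondence applied over the base $\C_E(z)$ only yields $\prod_{i}\sp^{i}(u)^{k_{i}}=f$ with $f\in\C_E(z)^{\times}$, hence $\prod_{i}\sp^{i}(a)^{k_{i}}=\sq(f)/f$ with a certificate $f$ having elliptic coefficients. Proposition \ref{prop:aetb} requires $b\in\C(z)^{\times}$, and its proof is a computation with $\qdivi$, which is not even defined for a general $f\in\C_E(z)^{\times}$: a factor in $C_{E_r}^{\times}$ has a $q^{r}$-periodic divisor, so any $q^{\Z}$-orbit it meets carries infinitely many zeros or poles and the coefficient of the class $\crochets{\pi(\alpha)}$ diverges; worse, the zero set of a polynomial in $z$ with coefficients in $C_{E_r}$ (for instance $z-e(z)$ with $e$ a nonconstant elliptic function) need not be contained in finitely many $q^{\Z}$-orbits at all. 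Nor does the gap close by noting that two solutions of $\sq(y)=(\sq(f)/f)\,y$ in $\C_E(z)^{\times}$ differ by an element of $C_{E}^{\times}$: an element of $\C_E(z)^{\times}$ modulo $C_{E}^{\times}$ need not lie in $\C(z)^{\times}$ (consider $z+e(z)$). The paper avoids this issue entirely: it quotes \cite[Theorem 3.1]{OvWib}, which asserts the equivalence of statement $(1)$ with the existence of the multiplicative relation with $b\in\C(z)^{\times}$ directly, after which Proposition \ref{prop:aetb} applies verbatim. As written, your $(1)\Rightarrow(2)$ is therefore incomplete --- either carry out the descent from $\C_E(z)^{\times}$ to $\C(z)^{\times}$, or invoke the cited theorem as the paper does; the remainder of your proof stands.
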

 \begin{proof}
 
 Let us prove $(1) \Rightarrow (2)$.
 Relying on the classification of the $\sp$-algebraic subgroups of $\bold{\GL_{1,\C}}$, \cite[Theorem 3.1]{OvWib}
 ensures that the first statement is equivalent to the existence of $b(z) \in \C(z)^{\times}$, $t \in \N$ and  $n_0, \dots,n_{t} \in \Z$ not all zero, such that the following equation holds \begin{equation} \label{eq:relation rank1}
 a(z)^{n_0}\sp(a(z)^{n_1}) \cdots \sp^t(a(z)^{n_t})=\frac{\sq(b(z))}{b(z)}.
 \end{equation}
 Proposition \ref{prop:aetb} shows then that the first statement implies the second.\par 
 Let us prove $(2) \Rightarrow (3)$. Assume that the second statement holds. By \cite[Proposition 4.9]{OvWib}, for any $B \in \Alg_{\C,\sp}$ and $g \in \spGal(\cQ_S/\Kqp)(B)$ there exists $\lambda_g \in B^{\times}$
 such that $g(u) =\lambda_g.u$. Since $a(z)=cz^m\frac{b(qz)}{b(z)}$, an easy computation gives
\begin{multline*}
\sq \left(\frac{\sp (u)}{u} \times \frac{b}{ \sp (b)} \right)=\frac{\sp (a)\sp (u) \sq(b)}{au \sp (\sq(b))}\\
=\qpr^{m}\frac{\sp (u)}{u}\frac{\sp (\sq(b))b\sq (b)}{\sp(b)\sq(b)\sp (\sq(b))}
=\qpr^{m}\frac{\sp (u)}{u}\times\frac{b}{\sp(b)}.
\end{multline*}
Therefore, if we set $h=\frac{\sp(b)}{b}$, we find
 $$\sq\left( \frac{\sp(\frac{\sp(u)}{u}) }{\frac{\sp(u)}{u}}.\frac{h}{\sp(h)}\right)= \frac{\sp(\frac{\sp(u)}{u}) }{\frac{\sp(u)}{u}}.\frac{h}{\sp(h)}.$$
 Since $\cQ_S^{\sq}=\C$, there exists $d \in \C$ such that we have the equality ${\frac{\sp(\frac{\sp(u)}{u}) }{\frac{\sp(u)}{u}}=d \frac{\sp(h)}{h}}$, {\it i.e.}, $\frac{\sp(\frac{\sp(u)}{u}) }{\frac{\sp(u)}{u}} \in \Kqp$ and is left invariant by the $\sp$-Galois group. This implies that 
 for any $B \in \Alg_{\C,\sp}$ and ${g \in \spGal(\cQ_S/\Kqp)(B)}$, we find $\sp(\frac{\sp(\lambda_g)}{\lambda_g})= \frac{\sp(\lambda_g)}{\lambda_g}$ and we deduce that the $\sp$-Galois group can be represented as a subgroup of $H$.\par 
 Let us prove $(3) \Rightarrow (1)$. If the third statement holds, then $\spGal(\cQ_S/\Kqp)$ is a strict subgroup of $\bold{\GL_{1,\C}}$. By Proposition  \ref{propo:spgaloiscorresptransdeg},
this implies that  $u$ and all its transforms with respect to $\sp$ are algebraically dependent over $\Kqp$. This proves $(1)$.\par 

Let us prove $(a) \Rightarrow (b)$. If there exist $c \in \C^{\times}$ and $b(z) \in \C(z)^{\times}$ such that $a(z)=c\frac{b(qz)}{b(z)}$ then 
$\frac{\sp(a)}{a}=\frac{\sq(h)}{h}$ where $h(z)=\frac{\sp(b(z))}{b(z)}$. Proposition \ref{propo:caracgalspisomono} allows to conclude that the  group $\spGal(\cQ_S/\Kqp)$ is $\sp$-constant. Let us prove ${(b) \Rightarrow (a)}$. If the  group $\spGal(\cQ_S/\Kqp)$ is $\sp$-constant then $u$ and all its transforms with respect to $\sp$ are algebraically dependent over $\Kqp$. By the above,  there exist ${c \in \C^{\times}}$, $m\in \Z$ and $b(z) \in \C(z)^{\times}$ such that $a(z)=cz^m\frac{b(qz)}{b(z)}$. 
However Proposition~\ref{propo:caracgalspisomono} states that   there exists $h(z) \in \Kqp$ such that $\sp(a)/a =\sq(h)/h$. An easy computation shows that $m=0$.
\end{proof}

\section{Discrete projective isomonodromy}\label{sec7}

The following proposition allows to characterize the $\sp$-Galois group of a $q$-difference system with large difference Galois group. The notion of large difference Galois group will be made more precise in the proposition, that we will apply later for the groups $\SL_{n}(\C)$ (when $n\geq 2$), $\mathrm{SO}_{n}(\C)$ (when $n\geq 3$) and $\mathrm{Sp}_{n}(\C)$ (when $n$ is even).

\begin{propo}\label{propo:hypertransgdetcontainsSln}
Let $A \in \GL_n(\C(z))$.  Let $G$ be the difference Galois group of $\sq(Y)=AY$ over the $\sq$-field $\C(z^*)$. Assume that its derived subgroup  $G^{ der}$ is an irreducible  most simple algebraic subgroup of $\GL_{n}(\C)$ and has toric constant centralizer (see Definition \ref{defi:spconstantcentralizer}).
Let $\cQ_S$ be a $\ssp$-Picard-Vessiot extension of $\sq(Y)=AY$ over $\Kqp$ and assume that 
$\cQ_S$ is a field.

Then, we have the following alternative:
\begin{enumerate}
\item there exist $d\in \mathbb{N}^{\times}$ and a regular  $\sp$-field extension $\widetilde{\C}$ of $\C$ such that  $\spGal(\cQ_S/\Kqp)_{\widetilde{\C}} \subset \GL_{n,\widetilde{\C}}$ is conjugate to a  $\sp$-group $H$ such that, for all ${B \in \Alg_{\widetilde{\C},\sp}}$ and $g \in H(B)$, there exists 
$\lambda_g \in B^\times$ such that $\sp^d(g)=\lambda_g g$; 
\item $\spGal(\cQ_S/\Kqp)$ contains   $\bold{G^{der}}$, the $\sp$-algebraic group associated to $G^{der}$, see Proposition \ref{propo:algschelmediffschemezarclosure}.
\end{enumerate}
Moreover, if the first case holds then there exist  $\widetilde{U} \in \GL_n( \widetilde{\cQ_S})$, with $\widetilde{\cQ_S}$ the fraction field of $\cQ_S \otimes_{\C} \widetilde{\C}$, see Lemma \ref{lem:extconst2}, a fundamental matrix of solutions, $d\in \mathbb{N}^{\times}$ and $B \in \GL_n( \widetilde{\Kqp})$, with $\widetilde{\Kqp}$ the fraction field of $\Kqp \otimes_{\C} \widetilde{\C}$, $g \in \widetilde{\cQ_S}^{\times}$,  such that 
\begin{equation}\label{gen integ hypertransclos}
\sp^d(\widetilde{U})=gB\widetilde{U}.
\end{equation}
\end{propo}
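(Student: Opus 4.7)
The plan is to follow the template of Proposition \ref{propo:projisomonodgencondition bis}, replacing the differential parameter $\delta$ by the difference parameter $\sp$ and adapting the Zariski-density/Schur-lemma arguments to the $\sp$-algebraic setting.

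First, I would observe that by Proposition \ref{propo:schematicalgebraicgaloisgroupcomparaison} the $\ssp$-Galois group $\spGal(\cQ_S/\C_E(z))$ is Zariski-dense in the algebraic group scheme $\Gal(\cQ_{R_{C_E}}/\C_E(z))$, whose identity component agrees with $G^{\circ}$ after base change to an algebraic closure of $C_E$. Since $G^{\circ,der}$ is almost simple over $\C$, this identifies the derived subgroup of $\Gal(\cQ_{R_{C_E}}/\C_E(z))$ with $G^{\circ,der}_{C_E}$. Consequently the Kolchin-$\sp$-closure of the derived subgroup of $\spGal(\cQ_S/\C_E(z))$ is a Zariski-dense $\sp$-closed subgroup of the almost simple algebraic group $G^{\circ,der}_{C_E}$.

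Next I would invoke the $\sp$-analogue of the Cassidy dichotomy for Zariski-dense $\sp$-subgroups of a simple algebraic group (this is the discrete counterpart to \cite[Prop.~42]{C72}, available in the Ovchinnikov--Wibmer framework): such a Zariski-dense $\sp$-closed subgroup is, after a suitable extension of the base $\sp$-field $\widetilde{C_E}/C_E$, either the whole $G^{\circ,der}_{\widetilde{C_E}}$ or conjugate to a $\sp^d$-constant subgroup of $G^{\circ,der}_{\widetilde{C_E}}$ for some $d\geq 1$. Taking $\widetilde{C_E}$ regular over $C_E$ (as in the proof of Proposition~\ref{propo:caracgalspisomono}) allows us to stay within the framework of Lemma~\ref{lem:baseextensionforsPV}. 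In the first alternative, conclusion (2) follows because the derived subgroup of $\spGal(\cQ_S/\C_E(z))$ already contains $G^{\circ,der}_{C_E}$.

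In the second alternative, the derived subgroup is normal in $\spGal(\cQ_S/\C_E(z))_{\widetilde{C_E}}$, so the whole $\spGal(\cQ_S/\C_E(z))_{\widetilde{C_E}}$ lies in the normalizer of a $\sp^d$-constant copy of $G^{\circ,der}$. Here I would prove a $\sp$-analogue of Lemma \ref{lem normalisateur}: if $M$ normalises a Zariski-dense $\sp^d$-constant subgroup $H$ of the irreducible almost simple group $G^{\circ,der}_{\widetilde{C_E}}$, then, for every $N\in H$, $\sp^d(M)NM^{-1}\sp^d(M)^{-1}\sp^d(M)M^{-1}=N$, i.e.\ $M^{-1}\sp^d(M)$ centralizes $H$ (and hence $G^{\circ,der}$ by Zariski density). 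The toric constant centralizer hypothesis on $G^{\circ,der}$ then forces $M^{-1}\sp^d(M)$ to be a scalar, i.e.\ $\sp^d(M)=\lambda_M M$, which is precisely the form of the group $H$ in the statement. This yields alternative (1).

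Finally, for the gauge relation $\sp^d(\widetilde U)=gB\widetilde U$: given a fundamental matrix $\widetilde U\in\GL_n(\widetilde{\cQ_S})$, the matrix $\sp^d(\widetilde U)\widetilde U^{-1}$ is annihilated modulo scalars by every element of $\spGal(\cQ_S/\C_E(z))_{\widetilde{C_E}}$ once we are in alternative (1), because for $\phi\in\spGal$ one has $\phi(\widetilde U)=\widetilde U C_\phi$ with $\sp^d(C_\phi)=\lambda_\phi C_\phi$; hence $\phi\bigl(\sp^d(\widetilde U)\widetilde U^{-1}\bigr)=\lambda_\phi\,\sp^d(\widetilde U)\widetilde U^{-1}$. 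The projective invariance then lets us write $\sp^d(\widetilde U)\widetilde U^{-1}=g\,B$ with $B\in\GL_n(\widetilde{\C_E(z)})$ (using the Galois correspondence \cite[Theorem~2.52]{OvWib} in the form of Proposition~\ref{propo:spgaloiscorresptransdeg}, applied to the fixed field) and $g\in\widetilde{\cQ_S}^\times$ encoding the $\sp^d$-scaling factor. This yields \eqref{gen integ hypertransclos}.

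The main obstacle is the second paragraph: establishing the precise $\sp$-difference analogue of Cassidy's theorem on Zariski-dense subgroups of an almost simple group, and in particular showing that after the correct $\sp$-field extension only the two extreme cases (full derived group, or $\sp^d$-constant) can occur. The rest---density of $\spGal$, normaliser computation via the toric constant centralizer, and translating the group-theoretic conclusion into the gauge equation---is a direct transposition of the arguments already carried out in the differential setting of Section \ref{sec hyptr}.
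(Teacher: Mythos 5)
Your proposal follows essentially the same route as the paper: Zariski-density of $\spGal(\cQ_S/\C_E(z))$ in the algebraic Galois group (Proposition \ref{propo:schematicalgebraicgaloisgroupcomparaison}), the difference analogue of Cassidy's dichotomy applied to the derived group, the normalizer/centralizer argument via the toric constant centralizer hypothesis (this is exactly Lemma \ref{lem5}), and the construction of $B=\frac{1}{g}\sp^d(\widetilde U)\widetilde U^{-1}$ fixed by the Galois group and hence rational by the Galois correspondence. The dichotomy you flag as the ``main obstacle'' is not reproved in the paper either: it is quoted as Theorem \ref{theo:caracspgroupalmostsimple} from \cite[Theorem A.25]{DVHaWib2}, so your uncertainty there is resolved by citation rather than by new work.

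The one concrete point you pass over is that this Cassidy-type theorem applies only to \emph{$\sp$-integral} Zariski-dense subgroups, and verifying this hypothesis for the derived group $\mathcal{D}(\spGal(\cQ_S/\C_E(z)))$ is where the paper spends real effort: one first shows that $\spGal(\cQ_S/\C_E(z))$ is absolutely $\sp$-integral (using that $\C_E(z)$ is relatively algebraically closed in $\cQ_S$, Lemma \ref{lemma: basefieldrelatalgclosed}, together with analogues of results of Di Vizio--Hardouin--Wibmer), and then that absolute $\sp$-integrality passes to the schematic derived group (Lemma \ref{lem:derivedgroupabsintegral}, which itself rests on Lemma \ref{lem:productabsolutelysint}). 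Relatedly, your assertion that the ``Kolchin-$\sp$-closure of the derived subgroup'' is Zariski-dense in $G^{\circ,der}_{C_E}$ requires the schematic statement Proposition \ref{propo:closurederivedgroup}, since in the group-scheme setting the derived group is defined via Hopf ideals rather than pointwise commutators. Neither omission derails the argument, but both are genuine verifications rather than formal transpositions of the differential case, and a complete write-up would need to supply them.
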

\begin{rem}
Since, we have assumed that the $\ssp$-Picard-Vessiot extension is a field the difference Galois group $G$ is connected by Lemma \ref{lem:compgroupeetgroupconnected} and $\Kqp$ is relatively algebraically closed in $\cQ_S$ by Lemma \ref{lem:relativalgclosedbasefieldPPVfield}.
\end{rem}

\begin{proof}[Proof of Proposition \ref{propo:hypertransgdetcontainsSln}]
 Since $\spGal(\cQ_S/\Kqp)$ is Zariski-dense in $ \bold{G}$, we find that the derived group  $\mathcal{D}(\spGal(\cQ_S/\Kqp))$  of $\spGal(\cQ_S/\Kqp)$ is Zariski-dense in $\bold{G^{ der}}$ by Proposition \ref{propo:closurederivedgroup}. Since  $\Kqp$ is relatively algebraically closed in $\cQ_S$, straightforward  analogues of \cite[Lemma~6.3]{DVHaWib2} and \cite[Proposition 4.3, (iii)]{DVHaWib1} show  that   the $\sp$-algebraic group 
$\spGal(\cQ_S/\Kqp)$ is  absolutely $\sp$-integral, see Definition~\ref{defi:absolutelysintegr}.   By Lemma~\ref{lem:derivedgroupabsintegral}, the $\sp$-algebraic group  
$\mathcal{D}(\spGal(\cQ_S/\Kqp))$  is  absolutely $\sp$-integral. Since $\C$ is inversive for $\sp$, 
Theorem \ref{theo:caracspgroupalmostsimple} implies the existence of a  $\sp$-field extension $\widetilde{\C}$ of $\C$, such that either $  \mathcal{D}(\spGal(\cQ_S/\Kqp))_{\widetilde{\C}} =\bold{G^{ der}}_{\widetilde{\C}}$, the base change of $\bold{G^{der}}$ to  $\widetilde{\C}$  or there exists  an integer $d\geq 1$ such that $  \mathcal{D}(\spGal(\cQ_S/\Kqp))_{\widetilde{\C}}$ is conjugate to a $\sp^d$-constant subgroup of $\bold{G^{der}}_{\widetilde{\C}}$. 
 The group $\bold{G^{der}_{\widetilde{\C}}}$ is irreducible almost simple and has toric constant centralizer. Since  $\mathcal{D}(\spGal(\cQ_S/\Kqp))_{\widetilde{\C}}$ is a normal subgroup of $\spGal(\cQ_S/\Kqp)_{\widetilde{\C}}$, Lemma \ref{lem5} ensures that $\spGal(\cQ_S/\Kqp)_{\widetilde{\C}}$ either  contains  $\bold{G^{der}}_{\widetilde{\C}}$ or is  conjugate to a $\sp$-algebraic group  $H$ over $\widetilde{\C}$ such that for all $B \in \Alg_{\widetilde{\C},\sp}$ and $g \in H(B)$ there exists 
$\lambda_g \in B^\times$ such that $\sp^d(g)=\lambda_g g$.

We shall prove that if the first case holds then there  there exist   ${\widetilde{U} \in \GL_n(\widetilde{\cQ_S} )}$ a fundamental matrix of solutions,  a positive integer $d$ and $B \in \GL_n( \widetilde{\Kqp})$, ${g \in\widetilde{\cQ_S}^{\times}}$, such that 
\begin{equation}\label{eq4}
\sp^d(\widetilde{U})=gB\widetilde{U}.
\end{equation}

Thus, let us  assume that there exist a positive integer $d$ and a $\sp$-field extension $\widetilde{\C}$ of $\C$ such that $\spGal(\cQ_S/\Kqp)_{\widetilde{\C}}$ is conjugate to a  $\sp$-group $H$ such that, for all $B \in \Alg_{\widetilde{\C},\sp}$ and ${g \in H(B)}$, there exist
$\lambda_g \in  \GL_{1,\widetilde{\C}}(B)$ such that ${\sp^d(g)=\lambda_g g}$. By Lemma \ref{lem:baseextensionforsPV}, we construct a  $\ssp$-Picard-Vessiot extension $\widetilde{\cQ_S}$ for ${\sq(Y)=AY}$ 
over $\widetilde{\C(z)}$ such that ${\spGal(\cQ_S/\Kqp)_{\widetilde{\C}}= \spGal(\widetilde{\cQ_S}/\widetilde{\Kqp})}$. 
By Proposition \ref{propo:repspgaloisgroup}, we can choose   $\widetilde{U} \in \GL_n(\widetilde{\cQ_S})$, a fundamental matrix of solutions, such that for any $\phi \in 
\spGal(\widetilde{\cQ_S}/\widetilde{\Kqp}) (B)$, we have ${\sp^d([\phi]_{\widetilde{U}})=\lambda_\phi [\phi]_{\widetilde{U}}}$ and $\lambda_\phi \in \GL_{1}(B)$. Then, for any ${\phi \in 
\spGal(\widetilde{\cQ_S}/\widetilde{\Kqp}) (B)}$, we have ${\phi( \sp^d({\widetilde{U}}).{\widetilde{U}}^{-1})=\lambda_\phi \sp^d({\widetilde{U}}).{\widetilde{U}}^{-1}}$. Let $g$
be a non-zero entry of $  \sp^d({\widetilde{U}}).{\widetilde{U}}^{-1}$. It is easy to see that 
 the matrix ${B=\frac{1}{g}\sp^d({\widetilde{U}}).{\widetilde{U}}^{-1} \in \GL_n(\widetilde{\cQ_S})}$ is fixed by $\spGal(\widetilde{\cQ_S}/\widetilde{\Kqp})$.
 By Proposition~\ref{propo:spgaloiscorresptransdeg},  $B \in \GL_n(\widetilde{\Kqp})$.
 \end{proof}

\section{$q$-difference equations with power series solutions}\label{sec8}

Let $A \in \GL_n(\C(z))$. We recall that $q,\qpr\in \C^{\times}$,  $|q|,|\qpr|\neq 1$, with $q$ and $\qpr$ multiplicatively independant.
Consider the $q$-difference system 
\begin{equation}\label{eq:systeminitcriteria}
\sq(Y)=AY.
\end{equation}
 The aim of the present section is to 
study the $\sp$-Galois group of \eqref{eq:systeminitcriteria} under the following assumption.
\begin{hypotheses}\label{hypo:genassumption}
Let $G$ be the Galois group of \eqref{eq:systeminitcriteria} over $\Kqp$ and let $\cQ_S$ be a $\sp$-Picard-Vessiot extension for \eqref{eq:systeminitcriteria} over $\Kqp$. Assume that
 \begin{enumerate}
 \item $n\geq 2$.
 \item  $G^{ der}$ is either $\SL_n(\C)$,  $\SO_{n}(\C)$ (when $n\geq 3$) or $\Sp_{n}(\C)$ (when $n$ is even);
 \item there exists a non zero vector solution $Y_0 =(
 u_1 , u_2 ,\dots, u_n)^{t}  \in (\Kqpform)^n$;
 \item  $\cQ_S$ is a field and contains the entries of $Y_{0}$.
 \end{enumerate}
\end{hypotheses}

The study of the $\sp$-Galois group will rely on the combination of two arguments.  The first arguments is  the classification of Zariski dense  $\sp$-algebraic subgroups of almost simple algebraic groups, that essentially says that one has a dichotomy: either the $\sp$-Galois group is large or the solutions of the system satisfy  a  linear $\sp$-equation. The second argument is more analytic and  allows to conclude that the second case can not happen since any power series vector  $Y_0$, solution of a $\sq$ and a $\sp$-linear equation is rational. In contradiction with the simplicity of the difference Galois group.  The analytic argument is a rephrasing of Sch{\"a}fke and Singer \cite{schafke2016consistent}, see also Bezivin and Boutabaa \cite{BeBo}, for an earlier result which is a little weaker, i.e., it is assumed that $|q|,|\tq|$ are multiplicatively independent.
\begin{lem}\label{lem:solcommuneqq1}
Let us consider a non zero vector $u=(u_{1},\dots,u_{n})^{t}$ with coefficients in $\Kqpform^n$ such that $\sq(u)=Au$ for some $A \in \GL_{n}(\Kqp)$. Assume moreover that each $u_{i}$ satisfies some nonzero linear $\qpr$-difference equation with coefficients in $\Kqp$.  Then, the $u_{i}$ actually belong to $\Kqp$.  
\end{lem}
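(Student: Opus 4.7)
The proof will closely parallel that of Lemma~\ref{sol commune}, which handled the analogous statement where the ``auxiliary'' equations were linear differential equations. The key ingredient there was Ramis's theorem \cite[Theorem 7.6]{R92}; here, the analogous ingredient is a result of B\'ezivin \cite{BeBo} (mentioned in the introduction), asserting that a formal power series which is simultaneously solution of a non-trivial linear $q$-difference equation and of a non-trivial linear $\mathbf{q}'$-difference equation, both with rational coefficients, must itself be a rational function, provided $|q|$ and $|\mathbf{q}'|$ are multiplicatively independent.

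The plan is the following. First, I would apply the cyclic vector lemma over the $\sq$-field $\C(z)$ to the system $\sq(Y)=AY$: this provides $P\in\GL_n(\C(z))$ such that $Pu=(f,\sq(f),\dots,\sq^{n-1}(f))^t$ for some $f\in \C((z))$, and $f$ is a non-zero solution of some linear $q$-difference equation of order $n$ with coefficients in $\C(z)$. (If $u\ne 0$ one checks that $f\ne 0$; the case $u=0$ is trivial.) Second, since $f$ is a $\C(z)$-linear combination of the components $u_1,\dots,u_n$ of $u$, and since each $u_i$ satisfies a non-zero linear $\mathbf{q}'$-difference equation with coefficients in $\C(z)$, a standard argument (building the $\C(z)$-span of the $\sp^j u_i$ inside $\C((z))$, which is finite-dimensional, and noting that $f$ lies in it) shows that $f$ itself satisfies a non-zero linear $\mathbf{q}'$-difference equation with coefficients in $\C(z)$.

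Third, I would invoke B\'ezivin's theorem to conclude that $f\in\C(z)$. Finally, since $Pu=(f,\sq(f),\dots,\sq^{n-1}(f))^t$, the vector $u=P^{-1}(f,\sq(f),\dots,\sq^{n-1}(f))^t$ has all its entries in $\C(z)$, as claimed.

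The only non-routine step is step three, but B\'ezivin's result is exactly tailored to this situation and is explicitly recalled in the paper's introduction as one of the classical pillars on which the present work is built. Steps one, two and four are standard manipulations with cyclic vectors and with the $\mathbf{q}'$-stable finite-dimensional $\C(z)$-vector space generated by the iterates $\sp^j u_i$.
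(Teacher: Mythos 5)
Your proposal follows the paper's own proof essentially verbatim: cyclic vector lemma to reduce to a single series $f$, observation that $f$ inherits a nonzero linear $\mathbf{q}'$-difference equation as a $\C(z)$-linear combination of the $u_i$, B\'ezivin's rationality theorem (the paper cites \cite[Remark 7.5]{BeBo}) to conclude $f\in\C(z)$, and then $u=P^{-1}(f,\sq(f),\dots,\sq^{n-1}(f))^{t}$. The parallel you draw with Lemma~\ref{sol commune} is exactly the one the authors exploit.
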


\begin{proof}[Proof of Lemma \ref{lem:solcommuneqq1}]
 One can find $r \in \N^*$ such that $u \in \C((z^{1/r}))$, ${A \in \GL_n(\C(z^{1/r}))}$ and the $\sp$-equation satisfied by the $u_i$'s has coefficients in $\C(z^{1/r})$. Setting, $x =z^{1/r}$ and replacing $q$ (resp. $\tq$)  by $q_r$ (resp. by $\tq_r$) as defined in $\S \ref{subsec:paramdiffgaldiscrete}$, we see that it is sufficient to prove the lemma for $r=1$.

Since $u=(u_{1},\dots,u_{n})^{t}$ has coefficients in $\C((z))^n$, and any entry of $u$ satisfies some nonzero linear $\qpr$-difference equation with coefficients in $\C(z)$, according to the cyclic vector lemma, there exists $P \in \GL_{n}(\C(z))$ such that $Pu = (f,\sq(f),\dots,\sq^{n-1}(f))^{t}$ for some $f \in \C((z))$ which is a solution of a nonzero linear $q$-difference equation, {\it i.e.}, a $\sq$-difference equation, of order $n$ with coefficients in $\C(z)$. Moreover, $f$ satisfies a nonzero linear $\sp$-equation with coefficients in $ \C(z)$, because it is a $\C(z)$-linear combination of the $u_{i}$ and the $u_{i}$ themselves satisfy such equations. It follows from \cite[Corollary~15]{schafke2016consistent}, see also  \cite[Remark 7.5]{BeBo}, that $f$ belongs to $\C(z)$. Hence, the entries of $u=P^{-1}(Pu)=P^{-1}(f,\sq(f),\dots,\sq^{n-1}(f))^{t}$ actually belong to $\C(z)$, as expected.    
\end{proof}

We now split our study depending whether the determinant of the $\sp$-Galois group of \eqref{eq:systeminitcriteria} over $\C(z^{*})$ is a strict subgroup of $\bold{\GL_{1,\C}}$ or not. Since the latter is equal to the $\sp$-Galois group of the order one equation ${\sigma_{q}Y=\det(A)Y}$, following Proposition \ref{propo:sprang1} it is a strict subgroup of $\bold{\GL_{1,\C}}$ if and only if there exist $b \in  \C(z)^{\times} $, $c\in \C^{\times}$ and $m \in \Z$ such that $\det(A)=cz^m\frac{b(qz)}{b(z)}$. Let us first consider this situation in Theorem \ref{theo:sphyperalgdetSLn}. See Theorem \ref{theo:sphypertrgdetSLn} for the other case.
\subsection{$\sp$-algebraic determinant group}
 The goal of the subsection is to prove:
\begin{theo}\label{theo:sphyperalgdetSLn}
Assume that the hypothesis \ref{hypo:genassumption} holds and  that there exist $b \in  \C(z)^{\times} $, $c\in \C^{\times}$, and $m \in \Z$, such that $\det(A)=cz^m\frac{b(qz)}{b(z)}$. 

 Then, the $\sp$-Galois group $\spGal(\cQ_S/\Kqp)$ contains $\bold{G^{der}}$.\par 
\end{theo}

\begin{proof}[Proof of Theorem \ref{theo:sphyperalgdetSLn}]
  Let $\cQ_S$ be a $\ssp$-Picard-vessiot extension of ${\sq(Y)=AY}$ over $\Kqp$ as in Assumption \ref{hypo:genassumption}. Since $Y_0=(u_1,\dots,u_n)^{t} \in \cQ_S^{\times}$, there exists a fundamental matrix of solutions $U \in \GL_n(\cQ_S)$ whose first column is precisely $Y_0$.
We let $G$ denote the difference Galois group of $\sq (Y)=AY$ over the field $\Kqp$, and we let  $\spGal(\cQ_S/\Kqp)$ be the $\sp$-Galois group over the $\ssp$-field $\Kqp$. By assumption, $G^{der}$ is either $\SL_n(\C)$ (when $n\geq 2$),  $\SO_{n}(\C)$ (when $n\geq 3$) or $\Sp_{n}(\C)$ (when $n$ is even). 
By Proposition \ref{propo:hypertransgdetcontainsSln}, 
we have the following alternative:
\begin{enumerate}
\item there exists a positive integer $d$ and a regular  $\ssp$-field extension $\widetilde{\C}$ of $\C$ such that $\spGal(\cQ_S/\Kqp)_{\widetilde{\C}}$ is conjugate to a  $\sp^d$-constant subgroup of  $G^{ der}_{\widetilde{\C}}$; 
\item $\spGal(\cQ_S/\Kqp)$ contains $\bold{G^{ der}}$.
\end{enumerate}
Moreover, if the first case holds, then there exist  $\widetilde{U} \in \GL_n( \widetilde{\cQ_S})$, with $\widetilde{\cQ_S}$ the fraction field of $\cQ_S \otimes_{\C} \widetilde{\C}$, a fundamental matrix of solutions,  a positive integer $d$ and $B \in \GL_n( \widetilde{\Kqp})$, with $\widetilde{\Kqp}$, $g \in \widetilde{\cQ_S}^{\times}$, such that 
\begin{equation}\label{gen integ hypertransclos2}
\sp^d(\widetilde{U})=gB\widetilde{U}.
\end{equation}

We claim that the first case can not hold. Suppose to the contrary that there exists a regular  $\sp$-field extension $\widetilde{\C}$ of $\C$  such that there exist  $\widetilde{U} \in \GL_n( \widetilde{\cQ_S})$ a fundamental matrix of solutions,  a positive integer $d$, $B \in \GL_n( \widetilde{\Kqp})$ and $g \in \widetilde{\cQ_S}^{\times}$, such that  \eqref{gen integ hypertransclos2} holds.  This means that there exists ${C \in \GL_n(\widetilde{\cQ_S}^{\sq})=\GL_n(\widetilde{\C})}$ such that $\widetilde{U} =UC$, and therefore $$\sp^d(U)=gBUC\sp^{-d}(C).$$
This formula implies that the (finite dimensional) $\widetilde{\Kqp}\langle g\rangle_{\sp}$-vector space generated by the entries of $U$ is stable by $\sp^d$. In particular, any $u_{i}$ (recall that the $u_{i}$ are the entries of the first column $Y_0$ of $U$) satisfies a nonzero linear $\qpr$-equation $\cL_i(y)$ with coefficients in $\widetilde{\Kqp}\langle g\rangle_{\sp}$. We claim that $\widetilde{\Kqp}\langle g\rangle_{\sp}=\widetilde{\Kqp}(g)$. Indeed,  we have  $\sp^d(\widetilde{U}) =gB \widetilde{U}$ and $\sq(\widetilde{U})=A\widetilde{U}$. Thus, 
\begin{multline*}
\sq \left( \frac{\sp^d(\det(\widetilde{U}))}{\det(\widetilde{U}) }\right)= \left( \frac{\sp^d(\sq\det(\widetilde{U}))}{\sq(\det(\widetilde{U})) }\right)\\
= \left( \frac{\sp^d(\det(A))}{\det(A) }\right)\frac{\sp^d(\det(\widetilde{U}))}{\det(\widetilde{U}) }= \left( \frac{\sp^d(cz^m\frac{b(qz)}{b(z)})}{cz^m\frac{b(qz)}{b(z)} }\right)\frac{\sp^d(\det(\widetilde{U}))}{\det(\widetilde{U}) }\\
={\qpr}^{md}\frac{\sq(\frac{\sp^d(b(z))}{b(z)})}{\frac{\sp^d(b(z))}{b(z)}}  \frac{\sp^d(\det(\widetilde{U}))}{\det(\widetilde{U}) } ={\qpr}^{md}\frac{\sq(h)}{h}  \frac{\sp^d(\det(\widetilde{U}))}{\det(\widetilde{U}) },
\end{multline*}
where $h(z)=\frac{\sp^d(b(z))}{b(z)}$. Using $\sp^d(\det(\widetilde{U})) =g^{n}\det(B) \det(\widetilde{U})$ in the equality
$$
\sq \left( \frac{\sp^d(\det(\widetilde{U}))}{\det(\widetilde{U}) }\right)={\qpr}^{md}\frac{\sq(h)}{h}  \frac{\sp^d(\det(\widetilde{U}))}{\det(\widetilde{U}) },
$$ allows us to deduce that $\sq \left( g^{n}\det (B)\right)={\qpr}^{md}\frac{\sq(h)}{h}   g^{n}\det (B)$.
Thus, we have 
 $\sq(g^nl)={\qpr}^{md}g^nl$ with ${l=\det(B)/h \in \widetilde{\Kqp}}$. 
Hence we have ${\sq(\sp(g^nl))={\qpr}^{md}\sp(g^nl)}$. Therefore, there exists $c \in \widetilde{\C}^\times$ such that $\sp(g^n l)= c g^n l$. 
Then $\left( \frac{\sp(g)}{g} \right)^n \in \widetilde{\Kqp}$. As we may see in the proof of Proposition \ref{propo:hypertransgdetcontainsSln}, $\spGal(\cQ_S/\Kqp)$ is absolutely $\sp$-integral. By Definition~\ref{defi:absolutelysintegr}, it follows that
$\spGal(\widetilde{\cQ_S}/\widetilde{\Kqp}) =\spGal( \cQ_S/\Kqp)_{\widetilde{\C}}$ is absolutely $\sp$-integral too. By \cite[Proposition~4.3,~(iii)]{DVHaWib1}, the field extension $\widetilde{\C(z^*)} \subset \widetilde{\cQ_S}$ is
$\sp$-regular in the sense of \cite[Definition~4.1]{DVHaWib1}. In particular it is a regular extension and since we are in characteristic zero,
\cite[Proposition in A.V.143]{Bourbaki} proves that $\widetilde{\C(z^*)}$ is relatively algebraically closed in $\widetilde{\cQ_S}$.
Thus, $\frac{\sp(g)}{g} \in \widetilde{\Kqp}$ and $\widetilde{\Kqp}\langle g\rangle_{\sp}=\widetilde{\Kqp}(g)$.
We claim that any $u_i$ satisfies a nonzero linear $\qpr$-equation with coefficients in $\widetilde{\Kqp}$. If $g \in \widetilde{\cQ_S}$ is   algebraic over $\widetilde{\Kqp}$ then $g \in \widetilde{\Kqp}$, because
$\widetilde{\Kqp}$ is relatively algebraically closed in  $\widetilde{\cQ_S}$. In that case, the claim is obvious. Thus, let us assume that  $g$ is transcendental over $\widetilde{\Kqp}$. If  $m=0$ then  $\sq(g^nl)=g^nl$ and thus $g^n \in \widetilde{\Kqp}$. A contradiction with $g$ transcendental over $\Kqp$. Let us write the equation $\cL_i(y)=0$ as $\sum_{j=0}^\nu \cL_{i,j}(y) g^j=0$ where the $\cL_{i,j}(y)$
are linear $\sp$-operators with coefficients in $\widetilde{\Kqp}$, not all zero. To prove our claim, it is sufficient to 
show that $g$ is transcendental over $\widetilde{\Kqp}\{u_1,\dots,u_n\}_{\sp}$. It is also sufficient to prove that $g^n$ is transcendental over $\widetilde{\Kqp}\{u_1,\dots,u_n\}_{\sp}$.   Assume that there exists
a non zero relation
\begin{equation}\label{eq:liaisong}
\sum_{k=0}^\kappa a_k g^{nk}=0,
\end{equation}
where $\kappa>1$ and $a_0,\dots,a_{\kappa-1}, a_\kappa=1 \in \widetilde{\Kqp}\langle u_1,\dots,u_n\rangle_{\sp}$ and $\kappa$ is minimal.  We recall that $\sq(g^n)=g^n{\qpr}^{md}l/\sq(l)$. Applying $\sq$ to \eqref{eq:liaisong} and subtracting ${\qpr}^{md\kappa}\frac{l^\kappa}{\sq(l^\kappa)} *$ \eqref{eq:liaisong}, we find a smaller liaison of the form
$$
\sum_{k=0}^{\kappa-1} (\sq(a_k/l^{k-\kappa}) -  {\qpr}^{md(\kappa-k)}a_k/l^{k-\kappa})  g^{nk}=0.
$$
Thus, for all $k=0,\dots,\kappa-1$, we have  $\sq(a_k/l^{k-\kappa}) -{\qpr}^{md(\kappa-k)}a_k/l^{k-\kappa} =0$. Let us  state and prove a technical lemma. 
\begin{lem}\label{lem1}
Let us fix $r\in \N^\times$. Then, the  equation $\sq(y)={\qpr}^{mdr}y$  has no non zero solution in $\widetilde{\Kqp}\langle u_1,\dots,u_n\rangle_{\sp}$. 
\end{lem}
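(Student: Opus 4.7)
The plan is to argue by contradiction. Suppose there exists a nonzero $y \in \widetilde{\C_E(z)}\langle u_1,\dots,u_n\rangle_{\sp}$ satisfying $\sq(y) = \mathbf{q}'^{mdr} y$. The overall strategy is to combine the Galoisian hypothesis on $G^{\circ,der}$ with the hypothesis $\det(A) = cz^m b(qz)/b(z)$ to force $y$ into a controlled form in terms of $\det\widetilde U$, and then extract a contradiction from an order-at-zero computation using the multiplicative independence of $|q|$ and $|\mathbf{q}'|$.

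For the Galois reduction, observe that $\mathbf{q}'^{mdr}$ is a $\sq$-constant, so for every $\phi$ in the $\sp$-Galois group $\spGal(\widetilde{\cQ_S}/\widetilde{\C_E(z)})$ the ratio $\phi(y)/y$ lies in $\widetilde{\cQ_S}^{\sq}=\widetilde{C_E}^{\times}$ and defines a character. By the Zariski density of this $\sp$-group scheme in the classical Galois group scheme $\widetilde G = G^{\circ}_{\widetilde{C_E}}$ over $\widetilde{\C_E(z)}$ (Proposition \ref{propo:schematicalgebraicgaloisgroupcomparaison}) together with the fact that the semi-simple group $G^{\circ,der}_{\widetilde{C_E}} \in \{\SL_n, \SO_n, \Sp_n\}$ has trivial character group, this character factors through the determinant $\det\colon G^{\circ} \to \GL_1$. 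Introduce $\mathbf{e} := (\det\widetilde U)/b(z)$; the assumption on $\det A$ yields $\sq(\mathbf{e}) = cz^m\mathbf{e}$ and $\phi(\mathbf{e})/\mathbf{e} = \det[\phi]$ for every $\phi \in \widetilde G$. Consequently there is $k \in \Z$ such that $y\,\mathbf{e}^{-k}$ is $\spGal$-invariant, hence belongs to $\widetilde{\C_E(z)}$ by Proposition~\ref{propo:spgaloiscorresptransdeg}. Write $y = f\mathbf{e}^{k}$ with $f \in \widetilde{\C_E(z)}^{\times}$.

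Applying $\sq$ to $y = f\mathbf{e}^k$ and using $\sq(\mathbf{e}) = cz^m\mathbf{e}$ produces the identity
\[
\sq(f)/f \;=\; \mathbf{q}'^{mdr}\, c^{-k}\, z^{-mk}.
\]
Since $\sq$ acts $\widetilde{C_E}$-linearly on $\widetilde{\C_E(z)}$ and $\widetilde{C_E}/C_E$ is a regular extension (Lemma \ref{lem:extconst2}), expanding $f$ in a $C_E$-basis of $\widetilde{C_E}$ produces a nonzero solution $f_0 \in \C_E(z)^{\times}$ of the same equation. Write $f_0 = z^N P(z)/Q(z)$ with $P, Q \in \C_E[z]$ having nonzero constant terms. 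The $z$-adic valuation of $\sq(f_0)/f_0$ at $0$ vanishes, whereas the valuation of $\mathbf{q}'^{mdr} c^{-k} z^{-mk}$ is $-mk$, so $mk = 0$ and, since $m \neq 0$, $k = 0$. Matching constant terms at $z=0$ in the resulting identity $q^N P(qz)Q(z)/(P(z)Q(qz)) = \mathbf{q}'^{mdr}$ then yields $q^N = \mathbf{q}'^{mdr}$. The multiplicative independence of $|q|$ and $|\mathbf{q}'|$ forces $N = 0$ and $mdr = 0$, contradicting $m,d,r \neq 0$.

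The main obstacle is the Galois reduction $y = f\mathbf{e}^{k}$. Because $y$ lies a priori in the $\sp$-PV field $\widetilde{\cQ_S}$ rather than in the classical PV field, the character $\phi\mapsto\phi(y)/y$ must first be analyzed at the level of the $\sp$-algebraic group scheme $\spGal$ and only then transferred to the classical algebraic group $\widetilde G$ via Zariski density, so that the structural constraint ``semi-simple groups have no nontrivial characters'' applies and delivers the factorization through $\det$. Once this reduction is secured, the descent to $\C_E(z)$ and the $z$-adic valuation analysis close the argument in a routine manner.
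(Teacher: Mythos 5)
Your Galois-theoretic reduction contains a gap that is fatal to the argument. Even granting that the character $\chi:\phi\mapsto\phi(y)/y$ of $\spGal(\widetilde{\cQ_S}/\widetilde{\C_E(z)})$ is trivial on the derived subgroup, what follows is only that $\chi$ factors through $\det$ composed with a $\sp$-algebraic character of $\GL_{1}$, and these have the general form $t\mapsto\prod_{i}\sp^{i}(t)^{k_{i}}$, not merely $t\mapsto t^{k}$. The correct output of your reduction is therefore $y=f\prod_{i}\sp^{i}(\mathbf{e})^{k_{i}}$ with $f\in\widetilde{\C_E(z)}$; since $\sq(\sp^{i}(\mathbf{e}))=c\,\mathbf{q}'^{im}z^{m}\sp^{i}(\mathbf{e})$, your valuation and constant-term computation now only yields $\sum_{i}k_{i}=0$ and $dr=\sum_{i}ik_{i}$, which is satisfiable (take $k_{0}=-dr$, $k_{1}=dr$), so no contradiction results. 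The failure is not cosmetic: the element $y_{0}=\bigl(\sp(\mathbf{e})/\mathbf{e}\bigr)^{dr}$ is a genuine nonzero element of $\widetilde{\cQ_S}$ satisfying $\sq(y_{0})=\mathbf{q}'^{mdr}y_{0}$, so the equation \emph{does} have nonzero solutions in the full $\ssp$-PV field. Your proof never actually uses the hypothesis that $y$ lies in the subfield $\widetilde{\C_E(z)}\langle u_{1},\dots,u_{n}\rangle_{\sp}$ (every step needs only $y\in\widetilde{\cQ_S}^{\times}$), so if it were correct it would disprove the existence of $y_{0}$; hence it cannot be repaired within the Galoisian framework alone.

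The paper's proof is far more elementary and uses precisely the hypothesis you drop: since the $u_{i}$ are Laurent series at $0$ and $\sp$ preserves this property, the field $\widetilde{\C_E(z)}\langle u_{1},\dots,u_{n}\rangle_{\sp}$ embeds into $\widetilde{\C_E((z))}$. One descends via Lemma \ref{lem:descentsolutions} to a nonzero solution $f=\sum_{\ell\geq\nu}y_{\ell}z^{\ell}\in\C_E((z))$ and compares lowest-order coefficients, remembering that $\sq$ acts nontrivially on the coefficients $y_{\ell}\in\C_E$: this gives $\sq(y_{\nu})q^{\nu}=\mathbf{q}'^{mdr}y_{\nu}$, and iterating $s$ times with $\sq^{s}(y_{\nu})=y_{\nu}$ yields $q^{s\nu}=\mathbf{q}'^{smdr}$, contradicting the multiplicative independence of $|q|$ and $|\mathbf{q}'|$ because $mdr\neq0$. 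Your final valuation idea is in the right spirit, but it must be applied directly to $y$ inside the Laurent series field (and must account for the $\sq$-action on coefficients, which your ``matching constant terms'' step also glosses over), rather than after an unjustified Galoisian normal form.
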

\begin{proof}[Proof of Lemma \ref{lem1}]
We have $\widetilde{\Kqp}\langle u_1,\dots,u_n\rangle_{\sp}\subset \widetilde{\C((z^*))}$, the fraction field of $\widetilde{\C}\otimes_{\C} \C((z^*))$. Suppose to the contrary that the equation has a non zero solution in $ \widetilde{\C((z))}$. Once again, replacing $q$ and $\tq$ by some suitable roots, it suffices to  prove Lemma \ref{lem1} in the case where the variable $z$ is non ramified.

By Lemma \ref{lem:descentsolutions}, we can find a non zero solution $f$ in $\C((z))$
Let ${f=\sum_{\ell=\nu}^{\infty} y_{\ell}z^{\ell}}$ with $y_{\nu}\neq 0$ a non zero solution of $\sq(y)={\qpr}^{mdr}y$. Taking the $z^{\nu}$ coefficients of the two sides of $\sq(y)={\qpr}^{mdr}y$, we find $\sq(y_{\nu})q^{\nu}={\qpr}^{mdr} y_{\nu}$. Since $y_\nu \in \C$,
$$
y_\nu q^{\nu}={\qpr}^{mdr}y_\nu.$$

With $q$ and $\qpr$ are multiplicatively independent, one should have ${\nu=mdr=0}$. We recall that $m\neq 0$, so $mdr\neq 0$. Consequently, we find a contradiction and this proves that the  equation $\sq(y)={\qpr}^{mdr}y$  has no non zero solution in $\widetilde{\Kqp}\langle u_1,\dots,u_n\rangle_{\sp}$.\end{proof}

Let us finish the proof of Theorem \ref{theo:sphyperalgdetSLn}. In virtue of Lemma \ref{lem1}, for all $k\in\{0,\dots,\kappa-1\}$, the  equation $\sq(y)={\qpr}^{md(\kappa-k)}y$  has no non zero solution
in  $\widetilde{\Kqp}\{ u_1,\dots,u_n\}_{\sp}$. 
Hence, $g^{n\kappa}=0$. This is a contradiction with the fact that $g$ is transcendental over $\widetilde{\Kqp}$ and proves our claim.\par

Therefore, the $u_i$ satisfy a non zero linear $\sp$-equation over $\widetilde{\Kqp}$. Since $\C$ is algebraically closed and $u_{i}\in \C((z^*))$, a descent argument shows that  the $u_i$ satisfy a non zero linear $\sp$-equation over $\C(z^*)$.

It follows from Lemma~\ref{lem:solcommuneqq1} that the $u_{i}$ belong to $\C(z^*)$. Hence, the first column of $U$ is fixed by the Galois group $G$ and this contradicts the hypothesis \ref{hypo:genassumption}, second point.
Therefore, $\spGal(\cQ_S/\Kqp)$ contains $\bold{G^{ der}}$.\par 

\end{proof}

\subsection{$\sp$-transcendental determinant}
Let us recall that the $\sp$-Galois group of $\sq (y)= \det(A) y$ over  $\Kqp$ is a strict subgroup of the multiplicative group $\bold{\GL_{1,\C}}$ if and only if there exist $b \in  \C(z)^{\times}$, $m\in \Z$, and $c\in \C^{\times}$, such that ${\det(A)=cz^{m}\frac{b(qz)}{b(z)}}$.\par

The goal of the subsection is to prove:
\begin{theo}\label{theo:sphypertrgdetSLn}
Assume that the hypothesis \ref{hypo:genassumption} holds and the $\sp$-Galois group of ${\sq (y)= \det(A) y}$ over  $\Kqp$ equals  $\bold{\GL_{1,\C}}$. Recall that the vector ${Y_0=(u_1,u_2,\dots,u_{n})^t \in (\Kqpform)^n}$ is a non zero vector solution of \eqref{eq:systeminitcriteria}. Then, at least one of the $u_i$ is $\sp$-transcendental over $\Kqp$.  
\end{theo}
  
We start by a technical lemma. 

 \begin{lem}\label{lem4}
Let $L$ be a $\sp$-field and let $L\langle a \rangle_{\sp}$ and $L\langle b_1,\dots,b_n \rangle_{\sp}$ be two $\sp$-field extensions of $L$, both contained in a same $\sp$-field extension of $L$. Assume
 that $a$ is $\sp$-transcendental over $L$ and that any $b_i$ is $\sp$-algebraic over $L$. Then, the 
 field extensions $L\langle a \rangle_{\sp}$ and $L\langle b_1,\dots,b_n \rangle_{\sp}$ are linearly disjoint over $L$.
  \end{lem}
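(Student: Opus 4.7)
The plan is to reduce this to the classical fact that a purely transcendental field extension is linearly disjoint from any extension over which its transcendence basis remains algebraically independent. Write $M = L\langle b_1,\dots,b_n\rangle_\sp$.

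First I would observe that since $a$ is $\sp$-algebraically independent over $L$, the elements $a, \sp(a), \sp^2(a),\dots$ are algebraically independent over $L$ in the classical sense, so the $\sp$-polynomial ring $L\{a\}_\sp$ is isomorphic to the ordinary polynomial ring $L[Y_0,Y_1,Y_2,\dots]$ and $L\langle a\rangle_\sp$ is a purely transcendental extension of $L$ with transcendence basis $\{\sp^i(a)\}_{i\ge 0}$.

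Next I would establish the key claim: the elements $a,\sp(a),\sp^2(a),\dots$ remain algebraically independent over $M$. Suppose not. Then there exists a nonzero polynomial $P \in M[Y_0,\dots,Y_k]$ with $P(a,\sp(a),\dots,\sp^k(a)) = 0$, i.e., $a$ is $\sp$-algebraic over $M$. On the other hand, since each $b_i$ is $\sp$-algebraic over $L$, the field $M = L\langle b_1,\dots,b_n\rangle_\sp$ is itself $\sp$-algebraic over $L$ (by the usual additivity of $\sp$-transcendence degree in towers, which is a standard result of Cohn, see \cite{Cohn:difference}). A tower of $\sp$-algebraic extensions being $\sp$-algebraic then yields that $a$ is $\sp$-algebraic over $L$, contradicting the hypothesis.

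Finally I would deduce linear disjointness. Since $\sp^i(a)$ are algebraically independent over $M$, the natural map
\[
R\otimes_L M \longrightarrow L\langle a\rangle_\sp \cdot M, \qquad R = L[a,\sp(a),\sp^2(a),\dots],
\]
sends $R\otimes_L M \cong M[a,\sp(a),\sp^2(a),\dots]$ injectively into a field, since the target is the domain $M(a,\sp(a),\dots)$. Localizing at the set $S = R\setminus\{0\}$, whose image consists of units in $L\langle a\rangle_\sp\cdot M$, produces an injection $L\langle a\rangle_\sp \otimes_L M = S^{-1}(R\otimes_L M) \hookrightarrow L\langle a\rangle_\sp \cdot M$, which is precisely the definition of linear disjointness of $L\langle a\rangle_\sp$ and $M$ over $L$.

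The main obstacle is the tower property for $\sp$-algebraic extensions used in the key claim; the rest is routine once that additivity is invoked, and it is the only place where the difference-algebraic structure (as opposed to pure commutative algebra) really enters the argument.
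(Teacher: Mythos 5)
Your proof is correct and follows essentially the same route as the paper's: both reduce the statement to the additivity of the $\sp$-transcendence degree in towers, concluding that $a$, being $\sp$-transcendental over $L$ while $L\langle b_1,\dots,b_n\rangle_{\sp}$ has $\sp$-transcendence degree zero over $L$, must remain $\sp$-transcendental over $L\langle b_1,\dots,b_n\rangle_{\sp}$. The only difference is cosmetic: the paper argues by contradiction and takes for granted the equivalence between linear disjointness and the algebraic independence of the $\sp^{i}(a)$ over $L\langle b_1,\dots,b_n\rangle_{\sp}$, whereas you spell out that last step via the tensor-product characterization.
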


\begin{proof}[Proof of Lemma \ref{lem4}]
To the contrary, suppose that $L\langle a \rangle_{\sp}$ and $L\langle b_1,\dots,b_n \rangle_{\sp}$ are not  linearly disjoint over $L$. Then $a$ is $\sp$-algebraic over $L\langle b_1,\dots,b_n \rangle_{\sp}$. This implies that the $\sp$-transcendence degree of the field $L\langle a, b_1,\dots,b_n \rangle_{\sp}$ over $L\langle b_1,\dots,b_n \rangle_{\sp}$ is zero. Since the $\sp$-transcendence degree of $L\langle b_1,\dots,b_n \rangle_{\sp}$ over $L$ is also zero, by hypothesis,
we find that the $\sp$-transcendence degree of $L\langle a, b_1,\dots,b_n \rangle_{\sp}$ over $L$ is zero by classical properties
of the transcendence degree. This implies that $a$ is $\sp$-algebraic over $L$ and yields a contradiction. 
\end{proof}

\begin{proof}[Proof of Theorem \ref{theo:sphypertrgdetSLn}]

We let $G$ denotes the difference  Galois group of $\sq (Y)=AY$ over the field $\Kqp$, and we let $\spGal(\cQ_S/\Kqp)$ denote its $\sp$-Galois group over the $\ssp$-field $\Kqp$. By assumption, the  $\sp$-Galois group of $\sq (y)= \det(A) y$ over  $\Kqp$ equals $\bold{\GL_{1,\C}}$. \par 
 We claim that at least one of the $u_i$  is $\sp$-transcendental  over $\Kqp$. Suppose to  the  contrary that all  of them are $\sp$-algebraic. In virtue of the results of Section~$\ref{sec53}$, the second case of Proposition~\ref{propo:hypertransgdetcontainsSln} can not hold. Then, 
 there exist  a  regular $\sp$-field extension $\widetilde{\C}$ of $\C$ and   $\widetilde{U} \in \GL_n(\widetilde{\cQ_S} )$ a fundamental matrix of solutions,  a positive integer $d$, $g \in \widetilde{ \cQ_S}^{\times}$, and ${B \in \GL_n( \widetilde{\Kqp})}$,  such that 
$$
\sp^d( \widetilde{U}  )=gB\widetilde{U}. 
$$

But $\widetilde{U} =UC$, for some $C \in \GL_n(\widetilde{\cQ_S}^{\sq})=\GL_n( \widetilde{\C})$. Therefore, 
\begin{equation}\label{eq1}
\sp^d(U)=gBUC\sp^{-d}(C).
\end{equation}
This shows that the  $\widetilde{\C(z)}\langle g \rangle_{\sp}$-vector subspace of $\widetilde{\cQ_S}$ generated by the entries of $U$ and all their successive $\sp$-transforms is of finite dimension. In particular, any $u_{i}$ satisfies a nonzero linear $\sp$-equation $\mathcal{L}_i(y)=0$ with coefficients in $\widetilde{\C(z)}\langle g \rangle_{\sp}$. We can assume that the coefficients of $\mathcal{L}_i(y)$ belong to $\widetilde{\Kqp}\{ g\}_{\sp}$. We write $\mathcal{L}_i(y) =\sum_\alpha L_{i,\alpha}(y)g_\alpha$ where $L_{i,\alpha}(y)$ is a linear $\sp$-operator with coefficients in $\widetilde{\Kqp}$,
and $g_\alpha$ is a monomial in the $\sp^i(g)$'s. 

We recall that the $\sp$-Galois group of $\sq (y)= \det(A) y$ over  $\Kqp$ equals $\bold{\GL_{1,\C}}$. In virtue of Proposition \ref{propo:sprang1}, $\det (U)$ is $\sp$-transcendental over $\Kqp$. With \eqref{eq1}, $g^n=\lambda \frac{\sp^d(\det(U))}{\det(U)}$ for some non zero $\lambda \in \widetilde{\Kqp}$. Thus, $g$ is $\sp$-transcendental over $\widetilde{\Kqp}$.

 By Lemma~\ref{lem4},  the $\sp$-fields $\widetilde{\C(z^*)}\langle g\rangle_{\sp}$ and $\widetilde{\C(z^*)}\langle u_{1},\dots ,u_{n}\rangle_{\sp}$ are linearly disjoint over $\widetilde{\Kqp}$. Since $u_{i}$ satisfies a nonzero linear $\sp$-equation $\mathcal{L}_i(y)=0$ with coefficients in $\widetilde{\C(z)}\langle g \rangle_{\sp}$ it follows  that there exists some non zero $L_{i,\alpha}(y)$ such that $L_{i,\alpha}(u_i)=0$. Therefore, the  element  $u_i$ satisfies a non zero linear $\sp$-equation over $\widetilde{\Kqp}$. Since $\C$ is algebraically closed and $u_{i}\in \Kqpform$, a descent argument shows that  the  element  $u_i$ satisfies a non zero linear $\sp$-equation over $\Kqp$.
It follows from Lemma~\ref{lem:solcommuneqq1} that the  element  $u_{i}$ belongs to $\Kqp$. Hence, the first column of $U$ is fixed by the difference Galois group $G$ and this contradicts the hypothesis~\ref{hypo:genassumption}. 
 \end{proof}

\section{Applications}\label{sec9}

\subsection{User friendly criteria for $\sp$-transcendence} 
The goal of this subsection is to use the results of Section $ \ref{sec8}$ in order to give transcendence criterias. We refer to Section $ \ref{sec8}$ for the notations used in this section. 
\begin{coro}\label{coro2}
Let $A \in \GL_n(\C(z))$ and let $G$ be the difference Galois group of the $q$-difference system $\sq(Y)=AY$ over the $\sq$-field $\C(z)$.  {Assume that one of the following holds 
\begin{itemize}
\item    $n\geq 2$ and $G^{\circ, der}=\SL_{n}(\C)$;
\item  $n\geq 3$ and $G^{\circ, der}=\SO_{n}(\C)$;
\item  $n$ is even and $G^{\circ, der}=\Sp_{n}(\C)$.
\end{itemize}}
 Assume that  ${Y_0=(u_1,u_2,\dots,u_{n})^t \in (\Kqpform)^n}$ is a non zero vector solution of $\sq(Y)=AY$.
Then, at least one of the  $u_i$ is $\sp$-transcendental  over $\Kqp$.

\end{coro}

\begin{proof}
We first make the following simple remark. Given  a $\sp$-field extension $K \subset L$ and $ f \in L$. If $f$ is $\sp^s$-transcendental over $K$ then $f$ is $\sp$-transcendental over $K$.  Indeed if $f$ were $\sp$-algebraic over $K$ then the transcendence degree of $K\langle f \rangle_{\sp}$ over $K$ would be finite. Since $K \subset K\langle f \rangle_{\sp^s} \subset K\langle f \rangle_{\sp}$, the transcendence degree of $K\langle f \rangle_{\sp^s}$ over $K$  would be finite and $f$ would be $\sp^s$-algebraic over $K$. A contradiction.

Then, it suffices to prove that at least one of the $u_i$ is $\sp^s$-transcendental over $K$ for some positive integer $s$. Since $\Kqpform^{\sq}=\C$, Proposition \ref{propo:existenceparamfieldsolutionqdiff} proves that there exist some positive integer  $r,s$  and a $(\sq^{r},\sp^s)$-Picard-Vessiot extension $\cQ_S$ for the system $\sq^{r}(Y)=\sq^{r-1}(A) \hdots \sq(A) A Y=A_{r}Y$ over $\Kqp$ such that 
\begin{itemize}
\item $\cQ_S$ is a field;
\item $\Kqp$ is relatively algebraically closed in $\cQ_S$;
\item $Y_0 \in \cQ_S$ is a solution vector of $\sq^{r}(Y)=A_{r} Y$;
\item the difference Galois group of $\sq^{r}(Y)=A_{r}Y$ over $\Kqp$ equals the connected component of $G$.
\end{itemize}
Replacing $q$ (resp. $\tq$) by $q^{r}$ (resp. by $\tq^s$), one can apply Theorems \ref{theo:sphyperalgdetSLn} and~\ref{theo:sphypertrgdetSLn}  to conclude that at least one of the  $u_i$ is $\sp^s$-transcendental over $\Kqp$ and thereby $\sp$-transcendental by the above remark.
\end{proof}

Similarly, we may prove the following:
\begin{coro}\label{coro4}
Let $G$ be the difference Galois group of the $q$-difference system (\ref{syst generique user-friendly}) over the $\sq$-field $\Kq$.   Assume that one of the following holds 
\begin{itemize}
\item    $n\geq 2$ and $G^{\circ, der}=\SL_{n}(\C)$;
\item  $n\geq 3$ and $G^{\circ, der}=\SO_{n}(\C)$;
\item  $n$ is even and $G^{\circ, der}=\Sp_{n}(\C)$.
\end{itemize}
 Let us assume that (\ref{equa generique user-friendly}) admits a non zero solution $g\in \Kqpform$. Then, $g(z)$  is $\sp$-transcendental  over $\Kqp$.
\end{coro}

\begin{proof}
We apply Corollary \ref{coro2} to the vector $(u_1,\dots,u_n) \in \Kqp^n$ with ${u_i=\sq^{i-1}(g)}$. 
To conclude, we just note that, since $\C(z^*)$ is an inversive $\sq$-field, the element $u_i$ is $\sp$-transcendental over $\C(z^*)$ if and only if $g$ is $\sp$-transcendental over $\C(z^*)$.
\end{proof}
\subsection{Hypergeometric series}

In this section, we follow the notations of  Section~$ \ref{sec42}$. We assume that $0<|q|<1$. Let us fix $n\geq 2$, let us consider ${\underline{a}=(a_{1},\dots,a_{n})\in (q^{\Q})^{n}}$, $\underline{b}=(b_{1},\dots,b_{n})\in (q^{\Q}\setminus q^{-\N})^{n}$, $b_{1}=q$, $\l\in \C^{\times}$.

\begin{coro}
Let us assume that (\ref{eq3}) is irreducible  and not $q$-Kummer induced.  Then  $_{n}\Phi_{n}(\underline{a},\underline{b},\l,q;z)$  is $\sp$-transcendental over $\Kqp$.
\end{coro}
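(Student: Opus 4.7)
The plan is to reduce directly to the second bullet of Corollary~\ref{coro2}, since the latter is formally tailored to the situation where a single series $f$ is a solution of a scalar $q$-difference equation. The argument proceeds in three short steps.

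First, I would verify that $f(z) := {}_{n}\Phi_{n}(\underline{a},\underline{b},\l,q;z)$ defines an element of $\cM er(\C^{\times})$. Since $b_{1}=q$ and $|q|<1$, the defining power series has positive radius of convergence (it converges on the disk $|z|<1/|\l|$), so $f$ is holomorphic on a neighborhood of $0$. The hypergeometric operator (\ref{eq3}) can be rewritten, after normalizing by the coefficient of $\sq^{n}$, as a scalar equation of the form (\ref{equa generique user-friendly}) with coefficients $a_{0}(z),\dots,a_{n}(z)\in\C(z)$ and $a_{0}(z)a_{n}(z)\neq 0$; using the formula $f(q^{-n}w)=-\sum_{i=1}^{n}\frac{a_{i}(q^{-n}w)}{a_{0}(q^{-n}w)}f(q^{-n+i}w)$ and iterating, I extend $f$ meromorphically from the initial disk to all of $\C^{\times}$, as is standard for $|q|\neq 1$.

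Second, I would place ourselves in the framework of Corollary~\ref{coro2}. Let $A\in\GL_{n}(\C(z))$ be the companion matrix of the scalar equation associated to (\ref{eq3}), as in (\ref{syst generique user-friendly}); this produces a system of the form (\ref{eq:systeminitcriteria}) for which $Y_{0}:=(f,\sq(f),\dots,\sq^{n-1}(f))^{t}\in\cM er(\C^{\times})^{n}$ is a non-zero solution vector (non-zero because $f(0)=1$). Since the operator (\ref{eq3}) is assumed irreducible and not $q$-Kummer induced, Theorem~\ref{theo1} applies and gives that $G^{\circ,der}$ is one of $\SL_{n}(\C)$, $\mathrm{SO}_{n}(\C)$ (for $n$ odd) or $\mathrm{Sp}_{n}(\C)$ (for $n$ even). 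In particular $n\geq 2$ and the hypotheses of Corollary~\ref{coro2} are satisfied.

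Third, I would invoke the second bullet of Corollary~\ref{coro2}: since our solution vector has the special companion shape $(f,\sq(f),\dots,\sq^{n-1}(f))^{t}$, the corollary yields directly that $f$ is $\sp$-algebraically independent over $\C_{E}(z)$, which is the desired conclusion.

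There is no real obstacle: all the difficult content (the large-Galois-group output of \cite{Ro11} recorded in Theorem~\ref{theo1}, and the parametrized Galois-theoretic transcendence criterion recorded in Theorems~\ref{theo:sphyperalgdetSLn} and \ref{theo:sphypertrgdetSLn}) has already been established. The only point that genuinely needs to be checked, namely the meromorphic continuation of $f$ to $\C^{\times}$, is routine once one uses the scalar equation itself to propagate the germ at $0$ outward.
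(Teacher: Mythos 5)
Your proof is correct and follows essentially the same route as the paper: the paper likewise observes that the series converges (since $0<|q|<1$), invokes Remark~\ref{rem1} for the meromorphic continuation to $\C^{\times}$ that you carry out by hand via the functional equation, and then concludes by combining Theorem~\ref{theo1} with the second bullet of Corollary~\ref{coro2}.
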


\begin{proof}
 The conclusion is  a direct application of Theorem \ref{theo1} and Corollary~\ref{coro4}.
\end{proof}

We follow the notations of Section  $ \ref{sec43}$. We assume that $0<|q|<1$, $n>s$, $n\geq 2$. Let $\underline{a}=(a_{1},\dots,a_n)\in (q^{\R})^{n}$, $\underline{b}=(b_{1},\dots,b_{s})\in (q^{\R}\setminus q^{-\N})^{s}$, $b_{1}=q$, $\l\in \C^{\times}$, $0<|q|<1$ and consider \eqref{eq3}.

\begin{coro}
For $(i,j)\in \{1,\dots,n\}\times  \{1,\dots,s\}$, let $\a_{i},\b_{j}\in \R$ such that $a_{i}=q^{\a_{i}}$ and $b_{i}=q^{\b_{j}}$.  Assume that for all $(i,j)\in \{1,\dots,n\}\times  \{1,\dots,s\}$, $\a_{i}-\b_{j}\notin \Z$, and that the algebraic group generated by $\mathrm{Diag}(e^{2i\pi \a_{1}},\dots,e^{2i\pi \a_{n}})$ is connected. Then,  $_{n}\Phi_{s}(\underline{a},\underline{b},\l,q;z)$ is $\sp$-transcendental over $\Kqp$.
\end{coro}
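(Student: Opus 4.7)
The plan is to apply the second bullet of Corollary \ref{coro2} to the cyclic vector solution of the companion system associated to the operator (\ref{eq3}). First, I would invoke Theorem \ref{theo2}: under the hypotheses that $\a_i - \b_j \notin \Z$ for all $(i,j)$ and that the algebraic group generated by $\mathrm{Diag}(e^{2i\pi\a_1},\dots,e^{2i\pi\a_n})$ is connected, the difference Galois group of the associated $q$-difference system over $\C(z)$ equals $\GL_n(\C)$. In particular, $G^{\circ,der} = \SL_n(\C)$, which is one of the three derived subgroups allowed in the hypothesis of Corollary \ref{coro2}.

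Next, I would verify that $f := {}_n\Phi_s(\underline{a},\underline{b},\l,q;z)$ lies in $\cM er(\C^\times)$. Since $b_1 = q$, the series $f$ satisfies the scalar $q$-difference equation of order $n$ defined by (\ref{eq3}), so the vector $(f,\sq(f),\dots,\sq^{n-1}(f))^t$ is a solution of the companion system. The ratio of consecutive coefficients of $f$ tends to $\l z$ as $m\to\infty$ because $|q|<1$ forces $(1-a_iq^m)\to 1$ and $(1-b_jq^m)\to 1$; hence $f$ converges on a neighbourhood of the origin. Iterating the equation in the form $Y(z/q)=A(z/q)^{-1}Y(z)$, and using that $|q|<1$ enlarges the disc of definition at each step, one meromorphically continues each component $\sq^j(f)$ to all of $\C^\times$, so the cyclic vector lies in $\cM er(\C^\times)^n$.

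Finally, the second bullet of Corollary \ref{coro2} applied to this cyclic vector solution immediately gives the $\sp$-algebraic independence of $f = {}_n\Phi_s(\underline{a},\underline{b},\l,q;z)$ over $\C_E(z)$. I do not anticipate any substantial obstacle: the proof is a direct combination of the Galois-theoretic input provided by Theorem \ref{theo2} with the user-friendly criterion of Corollary \ref{coro2}, together with the routine verification that $q$-hypergeometric series in the regime $0 < |q| < 1$ define meromorphic functions on $\C^\times$.
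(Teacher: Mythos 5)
Your proposal is correct and follows the paper's own proof: the paper likewise combines Theorem \ref{theo2} (giving $G=\GL_n(\C)$, hence $G^{\circ,der}=\SL_n(\C)$) with the second bullet of Corollary \ref{coro2}, after noting that the convergent series extends to $\cM er(\C^{\times})$ via the functional equation (this is exactly Remark \ref{rem1}). Your explicit convergence and continuation argument is just an unpacking of that remark, so there is no substantive difference.
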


\begin{proof}
 The conclusion is  a direct application of Theorem \ref{theo2} and Corollary~\ref{coro4}.
\end{proof}
\appendix
\begin{appendices}

\section{Difference algebraic groups}\label{secA2}
 Let $(\k,\sp)$ be a difference field. 
 We denote by $\Alg_{\k,\sp}$ the category of $\k$-$\sp$-algebras
 and by $\Groups$ the category of groups. We stress out the fact that we do not require $\sp$ to be an automorphism of $\k$, but only an endomorphism of $\k$.
 \begin{defi}\label{defi:sHopfalgebra}
 A $\k$-$\sp$-Hopf algebra $R$ is a $\k$-Hopf-algebra, endowed with a structure of $\k$-$\sp$-algebra,  whose structural maps are $\sp$-morphisms. A $\sp$-Hopf ideal of $R$ is a Hopf ideal, which  is stable under the action of $\sp$.
 \end{defi}
 
We define a $\sp$-algebraic group  over $\k$ as follows.
\begin{defi}\label{defi:sgroupscheme}
A  functor $H$ from the category $\Alg_{\k,\sp}$ to the category of $\Groups$ representable by a $\sp$-finitely generated  $\k$-$\sp$-Hopf algebra $\k\{H\}$  is called a $\sp$-algebraic group. A $\sp$-subgroup  $G$ of $H$ is a   $\sp$-algebraic group over $\k$ such that $G(B) \subset H(B)$ for all $B \in \Alg_{\k,\sp}$. It corresponds to a $\sp$-Hopf ideal $\mathfrak{I}_H$ of $\k\{G\}$ such that $\k\{H\}=\k\{G\}/\mathfrak{I}_H$.
\end{defi}
  
\begin{rem}
We adopt the following convention: if
$G$ is an algebraic group over $\k$, we denote by $\k[G]$ its associated Hopf algebra.

\end{rem}  
  
    The theory of $\sp$-algebraic groups and schemes  was initiated by Wibmer (see for instance \cite{Wibaffinediffgroup}). Many of the terminology for  $\sp$-algebraic schemes is borrowed from the usual terminology of schemes, by adding a straightforward compatibility with the difference operator $\sp$. In order to avoid too many definitions, we chose to refer often  to \cite{DVHaWib1}. However, one has to take care that the $\sp$-geometry is  more subtle, even in the affine case, than the algebraic geometry.

\begin{ex}\label{exa:fundadiffsubgroupGln}
Let  $\k\{X\}_{\sp}$ be  the $\k$-$\sp$-algebra of polynomials in the $n \times n$-matrix $X$ of $\sp$-indeterminates. Localizing $\k\{X\}_\sp$
 with respect to $\det(X)$, we find the $\k$-$\sp$-Hopf algebra $\k\{X,\frac{1}{\det(X)} \}_{\sp}$,that corresponds to the 
 $\sp$-algebraic group  attached to the general linear group  $\bold{\GL_{n,\k}}$. 
\end{ex}
The following proposition shows the connection between  algebraic groups over $\k$ and $\sp$-algebraic groups. 

\begin{propo}[\S A.4 and \S A.5 in \cite{DVHaWib1}] \label{propo:algschelmediffschemezarclosure}
Let $G$ be an algebraic group  over $\k$ represented by the finitely generated $\k$-Hopf algebra $\k[G]$. Let $H$ be a $\sp$-algebraic group  represented by the $\sp$-finitely generated $\k$-$\sp$-Hopf algebra $\k\{H\}$. The following holds.
\begin{itemize}
\item The group functor $\begin{array}{llll}\mathbf{G}:& \Alg_{\k, \sp} &\rightarrow &\Sets\\& B &\mapsto &G(B^\#)\end{array}$, with $B^\#$ the underlying $\k$-algebra of $B$, is representable by a  $\sp$-finitely generated $\k$-$\sp$-Hopf algebra. We call $\mathbf{G}$ the $\sp$-algebraic group attached to $G$. 
\item  We denote by   $H^\#$ the functor $\begin{array}{lll}
\Alg_{\k}& \rightarrow &\Sets\\ B &\mapsto & \Hom_{\Alg_{\k}} ( \k\{H\}^\#, B)
\end{array} $. Then,
$$\Hom(H^\#, G) \simeq \Hom(H,\mathbf{G}).$$

\item Assume that $H$ is a $\sp$-subgroup  of $\mathbf{G}$.  The smallest algebraic group  $\overline{H}$ over $\k$ such that $H^\# \rightarrow G$ factors through $\overline{H} \rightarrow G$ is called the Zariski closure of $H$ in $G$.
\end{itemize}
\end{propo}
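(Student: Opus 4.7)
The plan is to exhibit a left adjoint $[\sp] \colon \Alg_{\k} \to \Alg_{\k,\sp}$ to the forgetful functor $U \colon B \mapsto B^{\#}$ and then deduce all three assertions as formal consequences of this adjunction. For a $\k$-algebra $A$ with presentation $A = \k[(x_{\alpha})]/J$, I would set
\[
[\sp] A \;:=\; \k\{(x_{\alpha})\}_{\sp}/[J]_{\sp},
\]
where $[J]_{\sp}$ is the $\sp$-ideal generated by $J$. A direct check gives the universal property $\Hom_{\Alg_{\k,\sp}}([\sp] A, B) = \Hom_{\Alg_{\k}}(A, B^{\#})$ for every $B \in \Alg_{\k,\sp}$, independence from the chosen presentation, preservation of finite generation (carrying finitely generated $\k$-algebras to $\sp$-finitely generated $\k$-$\sp$-algebras), and preservation of tensor products (since in both categories tensor products are coproducts and left adjoints preserve colimits).

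For (1), I would set $\k\{\mathbf{G}\} := [\sp]\k[G]$. By functoriality of $[\sp]$ together with the isomorphism $[\sp](\k[G] \otimes_{\k} \k[G]) \simeq \k\{\mathbf{G}\} \otimes_{\k} \k\{\mathbf{G}\}$, the $\k$-Hopf structure on $\k[G]$ (comultiplication, counit, antipode) transfers to a $\k$-$\sp$-Hopf structure on $\k\{\mathbf{G}\}$. The adjunction then yields, for every $B \in \Alg_{\k,\sp}$,
\[
\Hom_{\Alg_{\k,\sp}}(\k\{\mathbf{G}\}, B) \;=\; \Hom_{\Alg_{\k}}(\k[G], B^{\#}) \;=\; G(B^{\#}) \;=\; \mathbf{G}(B),
\]
naturally in $B$ and compatibly with the group-valued structure, proving representability.

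For (2), morphisms $H \to \mathbf{G}$ of $\sp$-group schemes are the same as $\k$-$\sp$-Hopf algebra maps $\k\{\mathbf{G}\} \to \k\{H\}$; by the adjunction they correspond bijectively to $\k$-Hopf algebra maps $\k[G] \to \k\{H\}^{\#}$, which are exactly morphisms $H^{\#} \to G$ of $\k$-group schemes. The matching of Hopf conditions on the two sides is automatic because the Hopf structure of $\k\{\mathbf{G}\}$ was obtained by applying $[\sp]$ to that of $\k[G]$.

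For (3), given $H \subset \mathbf{G}$ cut out by a $\sp$-Hopf ideal $\mathfrak{I} \subset \k\{\mathbf{G}\}$, the induced morphism $H^{\#} \to G$ corresponds via (2) to the composition of the unit $\eta \colon \k[G] \to \k\{\mathbf{G}\}^{\#}$ with the quotient $\k\{\mathbf{G}\}^{\#} \to \k\{H\}^{\#}$. Its scheme-theoretic image is the closed subgroup scheme $\overline{H} \subset G$ defined by the Hopf ideal $\eta^{-1}(\mathfrak{I})$, and any closed subgroup $G' \subset G$ through which $H^{\#} \to G$ factors has defining ideal contained in this kernel, giving minimality. The main obstacle I anticipate is the careful verification that $[\sp]$ is a well-defined monoidal left adjoint and that the unit $\eta$ is injective (so that $\eta^{-1}(\mathfrak{I})$ coincides with the naive $\mathfrak{I} \cap \k[G]$), together with checking that the scheme-theoretic image of a morphism of group schemes is itself a subgroup scheme; these are standard but presentation-sensitive, and are the points where the difference-algebraic nature of $[\sp]\k[G]$ must be handled with care.
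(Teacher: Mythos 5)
Your proof is correct, but it takes a genuinely different route from the one the paper relies on: the paper gives no argument of its own, deferring to \S A.4--A.5 of \cite{DVHaWib1}, where the representing algebra $\k\{\mathbf{G}\}$ is constructed concretely as the twisted infinite tensor product $\k[G]\otimes_{\k}{}^{\sp}\k[G]\otimes_{\k}{}^{\sp^{2}}\k[G]\otimes\cdots$ (with ${}^{\sp^{i}}\k[G]=\k[G]\otimes_{\k,\sp^{i}}\k$), the universal property being read off from the fact that a $\sp$-morphism out of this algebra is determined by, and freely extended from, its restriction to the first tensor factor. Your presentation-based left adjoint $[\sp]$ represents the same functor, so by Yoneda the two constructions agree; what the explicit construction buys is precisely the resolution of the obstacles you flag at the end, and they do all go through. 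Presentation-independence is automatic from the universal property you verify. The unit $\eta\colon\k[G]\to\k\{\mathbf{G}\}^{\#}$ is injective because under the identification $\k\{(x_{\alpha})\}_{\sp}/[J]_{\sp}\simeq\bigotimes_{i\geq 0}{}^{\sp^{i}}\bigl(\k[(x_{\alpha})]/J\bigr)$ it is the inclusion of the first factor, and each ${}^{\sp^{i}}\k[G]$ is a nonzero algebra over the field $\k$ (equivalently, $[J]_{\sp}\cap\k[(x_{\alpha})]=J$); hence your $\eta^{-1}(\mathfrak{I})$ is literally $\mathfrak{I}\cap\k[G]$, in agreement with Example \ref{exa:definingideal}. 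Finally, you do not need any general statement about scheme-theoretic images of group scheme morphisms: the composite $\k[G]\to\k\{\mathbf{G}\}\to\k\{H\}$ is a morphism of Hopf algebras ($\eta$ is one because the Hopf structure on $[\sp]\k[G]$ is transported along $[\sp]$, and the quotient by the $\sp$-Hopf ideal $\mathfrak{I}$ is one by definition), and the kernel of a Hopf algebra morphism is a Hopf ideal, which gives $\overline{H}$ directly, with minimality exactly as you argue. In short, your argument is the abstract-adjunction packaging of the concrete construction in the cited reference, and it establishes all three bullets, the third of which is essentially a definition whose mathematical content is the existence of the smallest such $\overline{H}$.
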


\begin{ex}\label{exa:definingideal}
Any $\sp$-subgroup  $H$ of $\bold{\GL_{n,\k}}$
 is entirely determined by a $\sp$-Hopf ideal $\mathfrak{I}_H \subset \k\{X,\frac{1}{det(X)} \}_{\sp}$. The Zariski closure
 of $H$ in $\bold{\GL_{n,\k}}$ is defined by the Hopf ideal $\mathfrak{I}_H \cap \k[X,\frac{1}{det(X)}]$. 
\end{ex}

\begin{defi}\label{defi:baseextension}

Let $G$ be a $\sp$-algebraic group  over $\k$ and let $\widetilde{\k}$ be a $\sp$-field extension of $\k$. The base extension of $G$ to $\widetilde{\k}$ is the functor 
$\begin{array}{lll}\Alg_{\widetilde{\k}, \sp}& \rightarrow &\Sets\\ B &\mapsto &G(B)\end{array}$, where $B$ is viewed as $\k$-$\sp$-algebra. It is represented by the $\widetilde{\k}$-$\sp$-Hopf algebra $\k\{G\}\otimes_{\k}\widetilde{\k}$.
\end{defi}
This allows us to define the $\sp$-analogue of the notion of irreducibility.

\begin{defi}[Definition 4.2 and Lemma A.13 in \cite{DVHaWib1}]\label{defi:absolutelysintegr}
Let $G$ be a $\sp$-algebraic group over $\k$. Let $\widetilde{\k}$ be an algebraically closed, inversive field extension of $\k$.  We say that $G$ is absolutely $\sp$-integral if 
$\widetilde{\k}\{G\}$, the $\sp$-Hopf algebra $\widetilde{\k}\{G\}$ of $G_{\widetilde{\k}}$ is a $\sp$-domain, {\it i.e.}, $\widetilde{\k}\{G\}$ is an integral domain and $\sp$ is injective on $\widetilde{\k}\{G\}$.\end{defi}

\begin{lem}\label{lem:productabsolutelysint}
Let $G$ and $H$ be absolutely $\sp$-integral  $\sp$-algebraic groups over $\k$. Then, the product $G\times H$ is absolutely $\sp$-integral.
\end{lem}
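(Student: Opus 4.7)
The plan is to identify $\widetilde{\k}\{G \times H\}$ with $A \otimes_{\widetilde{\k}} B$, where $A := \widetilde{\k}\{G\}$ and $B := \widetilde{\k}\{H\}$, equipped with the diagonal action of $\sp$, and then to verify separately the two properties defining a $\sp$-domain: being an integral domain and having $\sp$ act injectively. For the first property I would simply invoke the classical commutative-algebra fact that the tensor product of two integral domains over an algebraically closed field is again an integral domain, applied to $A$ and $B$, which by hypothesis are integral domains containing the algebraically closed field $\widetilde{\k}$.

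The injectivity of $\sp \otimes \sp$ on $A \otimes_{\widetilde{\k}} B$ is the step I expect to require the most care, since tensoring injective maps is not automatic when the maps are not $\widetilde{\k}$-linear (and here $\sp$ acts non-trivially on $\widetilde{\k}$). The key input will be that $\widetilde{\k}$ is inversive. Fixing a $\widetilde{\k}$-basis $(e_i)_{i \in I}$ of $A$, I would first prove that the family $(\sp(e_i))_{i \in I}$ remains $\widetilde{\k}$-linearly independent: starting from a relation $\sum_i \lambda_i \sp(e_i) = 0$ with $\lambda_i \in \widetilde{\k}$, inversivity lets one write $\lambda_i = \sp(\mu_i)$, and the relation becomes $\sp(\sum_i \mu_i e_i) = 0$; injectivity of $\sp$ on $A$ together with linear independence of $(e_i)$ then forces every $\mu_i$, and hence every $\lambda_i$, to vanish.

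Having extended $(\sp(e_i))_{i \in I}$ to a $\widetilde{\k}$-basis of $A$, I would then take an arbitrary $\xi$ in the kernel of $\sp$ on $A \otimes_{\widetilde{\k}} B$ and write it uniquely as $\xi = \sum_i e_i \otimes b_i$ with $b_i \in B$. The identity $0 = \sp(\xi) = \sum_i \sp(e_i) \otimes \sp(b_i)$, combined with the uniqueness of coordinates in the tensor product with respect to the extended basis, yields $\sp(b_i) = 0$ for every $i$, and injectivity of $\sp$ on $B$ concludes that $\xi = 0$. The main obstacle is really the linear-independence step: it is precisely the point where inversivity of $\widetilde{\k}$ enters, and it replaces the flatness argument that would suffice to tensor injective maps in the $\sp$-linear setting.
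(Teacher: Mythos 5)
Your proof is correct, but it follows a genuinely different route from the paper's. The paper also reduces to the inversive, algebraically closed base field $\widetilde{\k}$ and identifies $\widetilde{\k}\{G\times H\}$ with $\widetilde{\k}\{G\}\otimes_{\widetilde{\k}}\widetilde{\k}\{H\}$, but it then embeds $\widetilde{\k}\{H\}$ into its fraction $\sp$-field $L$ and invokes \cite[Lemma A.13]{DVHaWib1}: over an inversive algebraically closed field, a $\sp$-domain is automatically $\sp$-regular, so $\widetilde{\k}\{G\}\otimes_{\widetilde{\k}}L$ is a $\sp$-domain, and the sub-$\sp$-ring $\widetilde{\k}\{G\}\otimes_{\widetilde{\k}}\widetilde{\k}\{H\}$ inherits both integrality and injectivity of $\sp$ at once. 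You instead treat the two halves of the $\sp$-domain condition separately and by hand: integrality via the classical fact that a tensor product of domains over an algebraically closed field is a domain (which is essentially the "regular extension" argument that the cited lemma packages in its difference-algebra form), and injectivity of $\sp$ via the basis argument, where inversivity of $\widetilde{\k}$ is exactly what makes the family $(\sp(e_i))_i$ stay linearly independent despite $\sp$ being only semilinear. Your version is self-contained and makes the role of inversivity visible, at the cost of an explicit (if routine) coordinate computation; the paper's version is shorter but outsources precisely that point to the $\sp$-regularity lemma of Di Vizio--Hardouin--Wibmer. Both arguments are sound.
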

\begin{proof}
Since the product commutes with base extension, we can directly assume that $\k$ is inversive and algebraically closed. Thus $\k\{G\}$ and $\k\{H\}$ are $\sp$-domains. This means for instance that $\k\{H\}$ can be embedded in  a $\sp$-field $L$. Then, 
$\k\{G\}\otimes_{\k}\k\{H\}$ embeds as $\sp$-ring in $\k\{G\}\otimes_{\k} L$. Since $\k$ is inversive and algebraically closed,  \cite[Lemma A.13]{DVHaWib1} shows that $\k\{G\}$ is $\sp$-regular, {\it i.e.}, $\k\{G\}\otimes_{\k} \k'$ is a $\sp$-domain for all 
$\sp$-field extension $\k'$ of $\k$. Thus $\k\{G\}\otimes L$ is a $\sp$-domain and the same holds for $\k\{G\}\otimes\k\{H\} =\k\{G\times H\}$. This ends the proof.
\end{proof}
We would like to classify some $\sp$-subgroups of $\bold{\GL_{n,\k}}$. First, we state a fundamental classification theorem, which is 
a $\sp$-analogue of a result of  Cassidy.

\begin{theo}[Theorem A.25 in \cite{DVHaWib2}] \label{theo:caracspgroupalmostsimple}
Let $\k$ be an algebraically closed, inversive $\sp$-field  of characteristic zero and let $G$ be a $\sp$-integral, $\sp$-algebraic subgroup of $\bold{\GL_{n,\k}}$. Assume that the Zariski closure of $G$ in $\bold{\GL_{n,\k}}$ is an  absolutely almost simple algebraic group, properly containing $G$. Then there exist a $\sp$-field extension $\widetilde{\k}$ of $\k$ and an integer $d\geq 1$ such that $G_{\widetilde{\k}}$ is conjugate to a $\sp^d$-constant subgroup of $\bold{\GL_{n,\widetilde{\k}}}$, i.e., there exists $P \in \GL_n(\widetilde{k})$ such that
$$PGP^{-1}(B) \subset \{g \in \GL_{n,\widetilde{k}}(B) | \sp^d(g)=g \}$$ for all $B \in \Alg_{\widetilde{k},\sp}$.
\end{theo}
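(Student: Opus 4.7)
The plan is to adapt Cassidy's classical theorem---which classifies proper differential algebraic subgroups of simple algebraic groups as being conjugate to the group of constant points---to the $\sp$-difference setting. The main conceptual obstacle is that there is no direct difference-algebraic analog of the Lie algebra, so one has to work entirely at the level of the defining $\sp$-Hopf ideal together with the structure theory of almost simple algebraic groups.

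First I would set up the algebraic framework. Let $\k[H]$ denote the coordinate ring of the Zariski closure $H$ of $G$ in $\GL_{n,\k}$, and let $\k\{G\}$ be the $\sp$-Hopf algebra of $G$, so that one has a surjection $\k[H]\{X\}_{\sp}/\mathfrak{I} \twoheadrightarrow \k\{G\}$, where $\mathfrak{I}$ is a $\sp$-Hopf ideal capturing the extra $\sp$-relations defining $G$ inside $H$. The hypothesis that $G$ is $\sp$-integral guarantees that $\k\{G\}$ is a $\sp$-domain over the algebraically closed inversive field $\k$, so one may pass to its $\sp$-function field; the hypothesis $G \subsetneq H$ ensures that $\mathfrak{I}$ contains a genuinely nontrivial $\sp$-polynomial of positive $\sp$-order.

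Second, I would analyse the minimal $\sp$-order $d \geq 1$ that occurs in a generator of $\mathfrak{I}$ modulo the ordinary polynomial relations defining $H$. Since $\mathfrak{I}$ is a Hopf ideal, the degree-$d$ part defines an $H$-equivariant morphism from $H$ to a $\sp$-twist of $H$. By almost simplicity of $H$, every non-trivial morphism $H \to H$ is an automorphism modulo the finite centre, and the classical structure of automorphisms of an almost simple algebraic group (inner automorphisms composed with diagram automorphisms) forces this relation to take the shape $\sp^{d}(X) = \phi(X)$ for some such automorphism $\phi$, at least on the $\sp$-function field.

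Third, I would neutralize $\phi$. Replacing $d$ by a suitable multiple makes $\phi^{m}$ inner, represented by some $P \in H(\widetilde{\k})$ where $\widetilde{\k}$ is a $\sp$-field extension of $\k$ large enough to contain $P$ and to trivialize the remaining diagram part of $\phi$. Conjugation by $P$ then transforms the defining $\sp$-relation of $G_{\widetilde{\k}}$ into $\sp^{dm}(X)=X$, which is exactly the statement that $P G P^{-1}$ is a $\sp^{dm}$-constant subgroup of $\GL_{n,\widetilde{\k}}$.

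The hard part will be step two: rigorously showing that any proper $\sp$-refinement of an almost simple $H$ is forced, at minimal $\sp$-order, into an automorphism relation of the form $\sp^{d}(X)=\phi(X)$. This is where most of the technical work of \cite{DVHaWib2} takes place, combining Hopf-algebra bookkeeping for $\sp$-group schemes with the Chevalley-type classification of endomorphisms of almost simple algebraic groups. The passage to $\widetilde{\k}$ in the conclusion is unavoidable, because both the automorphism $\phi$ and the conjugating element $P$ may fail to be defined over $\k$, even when $\k$ is algebraically closed, once compatibility with the $\sp$-action is imposed.
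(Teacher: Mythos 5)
This statement is not proved in the paper at all: it is imported verbatim as Theorem~A.25 of \cite{DVHaWib2} and used as a black box, so there is no internal proof to compare your attempt against. Judged on its own, your sketch does follow the broad strategy of the proof in that reference (a difference analogue of Cassidy's theorem): isolate the minimal $\sp$-order $d$ of a nontrivial relation, show that almost simplicity forces this relation to be of the form $\sp^{d}(X)=\phi(X)$ for an isogeny/automorphism $\phi$ of $H$, pass to a multiple of $d$ to make $\phi$ inner, and conjugate it away over a $\sp$-field extension.

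That said, as a proof it has a genuine gap exactly where you flag it, and the gap is not cosmetic. The assertion that ``the degree-$d$ part of $\mathfrak{I}$ defines an $H$-equivariant morphism from $H$ to a $\sp$-twist of $H$'' is the entire content of the theorem and cannot be extracted from Hopf-ideal bookkeeping alone. The actual argument considers, for increasing $d$, the Zariski closure $H_{d}$ of the image of $G$ under $g\mapsto(g,\sp(g),\dots,\sp^{d}(g))$ inside the product of the twists of $H$; one must show that at the first $d$ where $H_{d}$ is proper, $H_{d}$ is the \emph{graph} of a morphism and not merely a correspondence. This is a Goursat-type argument that uses almost simplicity (every proper algebraic subgroup of $H\times H'$ surjecting onto both factors is the graph of an isogeny modulo the finite centres), together with the characteristic-zero fact that such isogenies are automorphisms up to the centre. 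Without this, nothing rules out a more complicated proper $\sp$-subgroup. A second, smaller omission: in the final step, conjugating the relation $\sp^{dm}(g)=PgP^{-1}$ into $\sp^{dm}(h)=h$ requires solving the difference equation $\sp^{dm}(Q)=PQ$ for $Q\in\GL_{n}$; the $\sp$-field extension $\widetilde{\k}$ is needed precisely to guarantee a solution of this equation (not merely ``to contain $P$''), and one must check the extension can be taken compatible with the $\sp$-structure. So the proposal is a correct road map of the known proof, but the decisive step is named rather than carried out.
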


We also  have to consider the derived group. In analogy with \cite[\S 10.1]{WaterhouseIntrogroupscheme}, we define the derived
group of a $\sp$-algebraic group  as follows.

\begin{defi}\label{defi:derived group}
Let $G$ be a $\sp$-algebraic group  defined over $\k$ and let $\k\{G \}$ be its $\sp$-Hopf algebra. For any $n \in \N$, we define a  natural transformation $\phi_n$  from  $G^{2n}$ to $G$  as follows. For all $ B \in \Alg_{\k,\sp}$ and $ x_1,\dots,x_n,y_1,\dots,y_n \in G(B)^{2n}$, we set 
$$
   \phi_n(x_1,\dots,x_n,y_1,\dots,y_n)=x_1y_1x_1^{-1}y_1^{-1} \hdots x_ny_nx_n^{-1}y_n^{-1}.$$
Let $\psi_{n,G}: \k\{G\} \rightarrow \otimes^{2n}\k\{G\}$ be the corresponding dual map by Yoneda. Its kernel will be denoted by $\mathfrak{I}_{n,G}$. We will also use the notations $\psi_{n}$ and  $\mathfrak{I}_n$ for $\psi_{n,G}$ and $\mathfrak{I}_{n,G}$ respectively if no confusion is likely to arise.
Let $\mathfrak{I}_{\mathcal{D}(G)}=\cap_{n \in \N} \mathfrak{I}_n$. Then $\mathfrak{I}_{\mathcal{D}(G)}$ is a $\sp$-Hopf ideal of $\k\{G\}$ and we
defined the derived group $\mathcal{D}(G)$ as the $\sp$-algebraic subgroup of $G$ represented by $\k\{G\}/\mathfrak{I}_{\mathcal{D}(G)}$. 
\end{defi}
\begin{proof}
Let $\Delta$ denote the co-multiplication map of $\k\{G\}$. Then, it is clear that $\Delta(\mathfrak{I}_{2n})\subset \mathfrak{I}_n \otimes \mathfrak{I_n}$ since multiplying two products of $n$ commutators yields a product of $2n$ commutators. This shows that 
$\mathfrak{I}_{\mathcal{D}(G)}$ is an Hopf ideal. For all $n \in \N$, the map $\psi_n$ is a $\sp$-morphism so that $\mathfrak{I}_n$ is a $\sp$-ideal. This proves that $\mathfrak{I}_{\mathcal{D}(G)}$ is a $\sp$-ideal. 
\end{proof}

\begin{lem}\label{lem:commutebaseextension}
For any $\sp$-algebraic group  $G$ over $\k$ and any $\sp$-field extension $\widetilde{\k}$ of $\k$, we have 
$\mathcal{D}(G_{\widetilde{\k}})=\mathcal{D}(G)_{\widetilde{\k}}$
\end{lem}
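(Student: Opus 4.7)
The plan is to unwind the definition of $\mathcal{D}(G)$ in terms of the commutator maps $\phi_n$ and to show that the defining ideal $\mathfrak{I}_{\mathcal{D}(G)}$ commutes with the base change $-\otimes_{\k}\widetilde{\k}$. Set $R=\k\{G\}$, so that $\widetilde{\k}\{G_{\widetilde{\k}}\}=R\otimes_{\k}\widetilde{\k}$. The first observation is that the commutator morphism $\phi_n^{\widetilde{\k}}\colon (G_{\widetilde{\k}})^{2n}\to G_{\widetilde{\k}}$ is simply the base change of $\phi_n^{\k}$, since its definition on $S$-points $(x_i,y_i)\mapsto\prod_i[x_i,y_i]$ is purely group-theoretic and does not involve the base. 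Taking Yoneda duals, this identifies $\psi_n^{\widetilde{\k}}$ with $\psi_n^{\k}\otimes_{\k}\widetilde{\k}\colon R\otimes_{\k}\widetilde{\k}\to R^{\otimes 2n}\otimes_{\k}\widetilde{\k}$.

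Next, I would use that $\widetilde{\k}$ is flat (in fact free) over $\k$ to conclude that
$$
\mathfrak{I}_n^{\widetilde{\k}}\;=\;\ker\!\left(\psi_n^{\k}\otimes_{\k}\widetilde{\k}\right)\;=\;\ker(\psi_n^{\k})\otimes_{\k}\widetilde{\k}\;=\;\mathfrak{I}_n^{\k}\otimes_{\k}\widetilde{\k}.
$$
So for each fixed $n$ the ideals transform correctly under base extension. The main point, which I expect to be the only real step, is to commute the infinite intersection with the base change: one must show that
$$
\bigcap_{n\in\N}\bigl(\mathfrak{I}_n^{\k}\otimes_{\k}\widetilde{\k}\bigr)\;=\;\Bigl(\bigcap_{n\in\N}\mathfrak{I}_n^{\k}\Bigr)\otimes_{\k}\widetilde{\k}.
$$
The inclusion $\supset$ is immediate from flatness. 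For the reverse inclusion, I would fix a $\k$-basis $(a_i)_{i\in I}$ of $\widetilde{\k}$, so that $R\otimes_{\k}\widetilde{\k}=\bigoplus_{i\in I}R\,a_i$ and, for any $\k$-submodule $N\subset R$, $N\otimes_{\k}\widetilde{\k}=\bigoplus_{i\in I}N\,a_i$. Writing $f=\sum_i f_i\,a_i$ with $f_i\in R$, membership $f\in\mathfrak{I}_n^{\k}\otimes_{\k}\widetilde{\k}$ is then equivalent to $f_i\in\mathfrak{I}_n^{\k}$ for all $i$, and imposing this for every $n$ forces $f_i\in\bigcap_n\mathfrak{I}_n^{\k}$, which gives the result.

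Combining the two steps,
$$
\mathfrak{I}_{\mathcal{D}(G_{\widetilde{\k}})}\;=\;\bigcap_{n\in\N}\mathfrak{I}_n^{\widetilde{\k}}\;=\;\Bigl(\bigcap_{n\in\N}\mathfrak{I}_n^{\k}\Bigr)\otimes_{\k}\widetilde{\k}\;=\;\mathfrak{I}_{\mathcal{D}(G)}\otimes_{\k}\widetilde{\k}.
$$
Passing to the quotient Hopf algebras yields $\widetilde{\k}\{\mathcal{D}(G_{\widetilde{\k}})\}=\k\{\mathcal{D}(G)\}\otimes_{\k}\widetilde{\k}$, which by Definition \ref{defi:baseextension} is precisely $\widetilde{\k}\{\mathcal{D}(G)_{\widetilde{\k}}\}$, whence the equality of $\sp$-subgroup schemes $\mathcal{D}(G_{\widetilde{\k}})=\mathcal{D}(G)_{\widetilde{\k}}$. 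The only delicate input is the commutation of an infinite descending intersection with base change, but the freeness of $\widetilde{\k}$ over $\k$ makes this essentially elementary.
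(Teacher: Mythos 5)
Your proof is correct and follows the same route as the paper, whose entire argument is the one-line remark that the definition of $\mathfrak{I}_{\mathcal{D}(G)}$ commutes with base extension; you have simply supplied the details (base change of $\phi_n$, flatness giving $\ker(\psi_n\otimes\widetilde{\k})=\mathfrak{I}_n\otimes\widetilde{\k}$, and the basis argument to commute the infinite intersection with $-\otimes_{\k}\widetilde{\k}$), all of which are sound.
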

\begin{proof}
The definition of $\mathfrak{I}_{\mathcal{D}(G)}$ commutes with base extension. 
\end{proof}

\begin{propo}\label{propo:closurederivedgroup}
Let $H$ be an algebraic group  over $\k$ and let $G \subset H$ be a Zariski dense $\sp$-algebraic subgroup of $H$. Then, $\mathcal{D}(G)$ is a Zariski dense subgroup of $\mathcal{D}(H)$.
\end{propo}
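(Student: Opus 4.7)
The plan is to work at the level of the defining $\sp$-Hopf ideals. Let $\iota: \k[H] \to \k\{G\}$ denote the Hopf-algebra morphism dual to the inclusion $G^{\#} \hookrightarrow H$; by the Zariski density hypothesis combined with Proposition~\ref{propo:algschelmediffschemezarclosure}, $\iota$ is injective. I will show that the induced map $\k[\mathcal{D}(H)] \to \k\{\mathcal{D}(G)\}$ is still injective, which by that same proposition is precisely the statement that $\mathcal{D}(G)$ is Zariski dense in $\mathcal{D}(H)$.

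First, I would note that, since the inclusion $G \subset H$ is a homomorphism of group functors, the commutator maps $\phi_n^G$ and $\phi_n^H$ of Definition~\ref{defi:derived group} are intertwined by this inclusion. Yoneda duality then yields, for every $n \geq 0$, a commutative square whose horizontal arrows are $\psi_n^H: \k[H]\to \otimes^{2n}\k[H]$ and $\psi_n^G: \k\{G\}\to \otimes^{2n}\k\{G\}$, and whose vertical arrows are $\iota$ and $\iota^{\otimes 2n}$. Since $\iota$ is injective and tensor products of injections over the field $\k$ remain injections, $\iota^{\otimes 2n}$ is also injective. A routine diagram chase then gives $\iota^{-1}(\ker \psi_n^G) = \ker \psi_n^H$ for every $n \geq 0$.

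Taking the intersection over all $n$ and recalling the definition of $\mathfrak{I}_{\mathcal{D}(H)}$ and $\mathfrak{I}_{\mathcal{D}(G)}$ from Definition~\ref{defi:derived group}, this yields
\[
\iota^{-1}(\mathfrak{I}_{\mathcal{D}(G)}) = \bigcap_{n \geq 0}\iota^{-1}(\ker \psi_n^G) = \bigcap_{n\geq 0}\ker \psi_n^H = \mathfrak{I}_{\mathcal{D}(H)}.
\]
Consequently the induced morphism $\k[H]/\mathfrak{I}_{\mathcal{D}(H)} \to \k\{G\}/\mathfrak{I}_{\mathcal{D}(G)}$, that is, $\k[\mathcal{D}(H)] \to \k\{\mathcal{D}(G)\}$, is injective, which concludes the proof.

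The only delicate point I anticipate is verifying the naturality of the commutator construction in the $\sp$-scheme setting, so that the Yoneda-dual square really commutes at each $n$; once this naturality is in place the remaining arguments reduce to the injectivity of $\iota^{\otimes 2n}$, which is standard linear algebra over a field, and to the straightforward commutation of preimage with arbitrary intersection.
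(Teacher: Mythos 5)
Your proof is correct and follows essentially the same route as the paper's: both are dual Hopf-algebraic computations with the ideals $\ker\psi_n$, resting on the naturality of the commutator maps and on the fact that Zariski density makes $\k[H]\to\k\{G\}$ injective. If anything, your explicit use of the injectivity of $\iota^{\otimes 2n}$ spells out the one step the paper treats tersely, namely the inclusion $(\mathfrak{J}_G+\mathfrak{I}_{\mathcal{D}(\mathbf{H})})\cap\k[H]\subseteq \mathfrak{I}_{\mathcal{D}(\mathbf{H})}\cap\k[H]$.
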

\begin{proof}
Let $\k\{ \mathbf{H} \}$ be the $\sp$-Hopf algebra of the $\sp$-algebraic group  $\mathbf{H}$ attached to $H$ as in Proposition \ref{propo:algschelmediffschemezarclosure}.  Then, $\k[H]$ is a sub-Hopf algebra of $\k\{\mathbf{H}\}$. This means, in the notation above, that $\psi_n: \k[H] \rightarrow \otimes^{2n}\k[H]$  is the restriction of $\psi_{n,H}: \k\{\mathbf{H}\} \rightarrow \otimes^{2n}\k\{\mathbf{H}\}$. 
Thus, if $I_{\mathcal{D}(H)} \subset \k[H]$ denotes the Hopf ideal of $\mathcal{D}(H)$ in $H$, then $\mathfrak{I}_{\mathcal{D}(\mathbf{H})} \cap \k[H]=I_{\mathcal{D}(H)}$.
Since $G$ is Zariski dense in $H$, the $\sp$-algebraic group $G^n$ is Zariski dense in $H^n$ for any positive integer $n$. For all $n \in \N^{\times}$, we denote by $\mathfrak{I}_{G^n}$ the defining ideal of $G^n$ in $\mathbf{H^n}$. By the above, one has 
$$ \mathfrak{I}_{G^n} \cap k[H^n]=\mathfrak{I}_{G^n} \cap \otimes^n k[H]= \{0\}.$$
We denote by $\pi_n: k\{\mathbf{H^n}\} \rightarrow \k\{G^n\}$ the surjective morphism of $\sp$-Hopf algebras whose kernel is $ \mathfrak{I}_{G^n}$. 
 Since the applications $\psi_{n,G}$ and $\psi_{n,H}$ are constructed using  comultiplication and co-inverse, one finds a commutative diagram  of morphisms of $\sp$-Hopf algebras
$$
  \xymatrix{
 k\{\mathbf{H}\} \ar[r]^{\pi_1} \ar[d]^{\psi_{n,H}} & k\{G\} \ar[d]^{\psi_{n,G}} \\
 \otimes^{2n} k\{\mathbf{H}\} \ar[r]^{\pi_{2n}} & \otimes^{2n} k\{G\}
 }
$$

  Let $\mathfrak{J}_{\mathcal{D}(G)}$ be the defining ideal of $\mathcal{D}(G)$ in $\mathbf{H}$, {\it i.e.}, ${\pi_1}^{-1}(\mathfrak{I}_{\mathcal{D}(G)})$. To prove that $\mathcal{D}(G)$ is Zariski dense in $\mathcal{D}(H)$, we need to show that 
  $$
  \mathfrak{J}_{\mathcal{D}(G)} \cap k[H]= \mathfrak{I}_{\mathcal{D}(\mathbf{H})} \cap \k[H]=I_{\mathcal{D}(H)}.
  $$
Let $x \in    \mathfrak{I}_{\mathcal{D}(\mathbf{H})}$. For any $n \in \N^{\times}$, we have 
$\pi_{2n} \circ \psi_{n,H}(x)=0=\psi_{n,G} \circ \pi_1 (x)$ so that $\pi_1(x) \in \mathfrak{I}_{n,G}$ and $x \in \mathfrak{J}_{\mathcal{D}(G)}$. Thus, 
$$
\mathfrak{I}_{\mathcal{D}(\mathbf{H})} \cap \k[H] \subset  \mathfrak{J}_{\mathcal{D}(G)} \cap k[H].
  $$
 Conversely, let $x \in   \mathfrak{J}_{\mathcal{D}(G)} \cap k[H]$. Then, $\psi_{n,G} \circ \pi_1(x)=0=\pi_{2n} \circ \psi_{n,H}(x)$ so that $\psi_{n,H}(x) \in \mathrm{Ker}(\pi_{2n})= \mathfrak{I}_{G^{2n}}$. Since  $x \in k[H]$, we conclude that $\psi_{n,H}(x) \in k[H^{2n}] \cap  \mathfrak{I}_{G^{2n}} =\{0\}$. Thus, $x \in \mathfrak{I}_{n,H}$ for any $n$ so that 
 $x \in  \mathfrak{I}_{\mathcal{D}(\mathbf{H})}\cap k[H]$. This ends the proof. 
\end{proof}

\begin{lem}\label{lem:derivedgroupabsintegral}
The derived group of an absolutely  $\sp$-integral  $\sp$-algebraic group   $G$ over $\k$ is absolutely $\sp$-integral. 
\end{lem}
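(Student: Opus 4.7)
The plan is to combine the identity $\mathfrak{I}_{\mathcal{D}(G)}=\bigcap_{n\in\N}\mathfrak{I}_n$ from Definition \ref{defi:derived group} with an absolute $\sp$-integrality statement for each individual quotient $\k\{G\}/\mathfrak{I}_n$. First, by Lemma \ref{lem:commutebaseextension}, the formation of $\mathcal{D}$ commutes with base extension, so I may pass to an algebraically closed, inversive $\sp$-field extension $\widetilde{\k}$ of $\k$; after this reduction $\k\{G\}$ is itself a $\sp$-domain, and the task becomes to show that $\k\{G\}/\mathfrak{I}_{\mathcal{D}(G)}$ is a $\sp$-domain.

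Next, I would observe that for each $n$, the map $\psi_n\colon \k\{G\}\to \otimes^{2n}\k\{G\}=\k\{G^{2n}\}$ is a $\sp$-morphism with kernel $\mathfrak{I}_n$, so it factors through an injective $\sp$-morphism $\k\{G\}/\mathfrak{I}_n\hookrightarrow \otimes^{2n}\k\{G\}$. Iterating Lemma \ref{lem:productabsolutelysint} shows that $G^{2n}$ is absolutely $\sp$-integral, so $\otimes^{2n}\k\{G\}$ is a $\sp$-domain; being a sub-$\sp$-algebra of a $\sp$-domain, $\k\{G\}/\mathfrak{I}_n$ is then a $\sp$-domain, and $\mathfrak{I}_n$ is a prime $\sp$-ideal. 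Moreover, the chain $(\mathfrak{I}_n)_{n\in\N}$ is decreasing: the closed immersion $\iota\colon G^{2n}\hookrightarrow G^{2n+2}$ defined by $(x_1,y_1,\ldots,x_n,y_n)\mapsto (x_1,y_1,\ldots,x_n,y_n,e,e)$ satisfies $\phi_n=\phi_{n+1}\circ\iota$, so dually $\psi_n=\iota^*\circ \psi_{n+1}$ and $\mathfrak{I}_{n+1}\subset \mathfrak{I}_n$.

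To conclude, given $a,b\in \k\{G\}$ with $ab\in \mathfrak{I}_{\mathcal{D}(G)}$, primality of each $\mathfrak{I}_n$ yields $a\in \mathfrak{I}_n$ or $b\in \mathfrak{I}_n$ for every $n$. The sets $S_a=\{n:a\in\mathfrak{I}_n\}$ and $S_b=\{n:b\in\mathfrak{I}_n\}$ are downward-closed in $\N$ because $\mathfrak{I}_{n+1}\subset\mathfrak{I}_n$, so each is either bounded or equal to $\N$; since $S_a\cup S_b=\N$, one of them must equal $\N$, giving $a\in\mathfrak{I}_{\mathcal{D}(G)}$ or $b\in\mathfrak{I}_{\mathcal{D}(G)}$. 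Injectivity of $\sp$ on the quotient follows from injectivity on each $\k\{G\}/\mathfrak{I}_n$ via the same intersection argument: if $\sp(x)\in\mathfrak{I}_n$ for all $n$ then $x\in\mathfrak{I}_n$ for all $n$. The main obstacle I foresee is that $\k\{G\}$ is not $\sp$-Noetherian and the chain $(\mathfrak{I}_n)$ need not stabilize, so one cannot reduce the intersection to a single term; the downward-closedness of $S_a$ and $S_b$ is precisely what replaces stabilization and makes the argument go through in this non-Noetherian setting.
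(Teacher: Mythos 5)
Your proof is correct and follows essentially the same route as the paper: reduce by base change (Lemma \ref{lem:commutebaseextension}) to an algebraically closed inversive field, show each $\mathfrak{I}_n$ is $\sp$-prime as the kernel of the $\sp$-morphism $\psi_n$ into the $\sp$-domain $\k\{G^{2n}\}$ (via Lemma \ref{lem:productabsolutelysint}), and conclude for the intersection $\mathfrak{I}_{\mathcal{D}(G)}=\bigcap_n\mathfrak{I}_n$. You in fact supply a step the paper's proof leaves implicit, namely that $(\mathfrak{I}_n)_{n\in\N}$ is a decreasing chain (since $\phi_n$ factors through $\phi_{n+1}$ by padding with trivial commutators), which is exactly what makes the intersection of these $\sp$-prime ideals again $\sp$-prime; the paper passes directly from ``each $\mathfrak{I}_n$ is $\sp$-prime'' to ``$\mathfrak{I}_{\mathcal{D}(G)}$ is $\sp$-prime'' without this justification.
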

\begin{proof}
Since by Lemma \ref{lem:commutebaseextension}, the formation of the derived group commutes with base extension. We can assume that 
$\k$ is algebraically closed and inversive. Since the $\k$-$\sp$-Hopf algebra of $\mathcal{D}(G)$ is $\k\{G\}/\mathfrak{I}_{\mathcal{D}(G)}$, the group $\mathcal{D}(G)$ is absolutely $\sp$-integral if and only if $\mathfrak{I}_{\mathcal{D}(G)}$ is $\sp$-prime, {\it i.e.},  prime and such that $\sp(a) \in \mathfrak{I}_{\mathcal{D}(G)}$ implies $a \in \mathfrak{I}_{\mathcal{D}(G)}$.  By Lemma  
\ref{lem:productabsolutelysint}, we find that for all $n \in \N$, the group $G^{2n}$ is absolutely $\sp$-integral. This means that
$\k\{G^{2n}\}$ is a $\sp$-domain for all $n \in \N$. Since $\mathfrak{I}_n$ is the  kernel of the  $\sp$-morphism 
$\psi_n:\k\{G\} \rightarrow  \k\{G^{2n}\}$, the ideal $\mathfrak{I}_n$ is $\sp$-prime for all $n \in \N$. This implies that $\mathfrak{I}_{\mathcal{D}(G)}$ is $\sp$-prime.  
 \end{proof}

 \begin{defi}\label{defi:spconstantcentralizer}
 Let $(\k,\sp)$ be a $\sp$-field and let $G\subset \bold{\GL_{n,\k}}$ be an algebraic group  defined over $\k$. Let $d \in \N^\times$.  We consider the $\sp$-subgroup $G^{\sp^d}$ of $G$ defined by $G^{\sp^d}(B)= \{g \in G(B) | \sp^d(g)=g \}$ for any $B \in \Alg_{\k,\sp}$.  We say that $G$  has a toric constant centralizer  if, for any $d \in \N^\times$,
 for any $B \in \Alg_{\k,\sp}$, the following holds: if $h \in \GL_{n,\k}(B)$ centralizes $G^{\sp^d}(B)$ then 
 $h =\lambda I_n$ for some $\lambda \in B^\times$.
 \end{defi}

 \begin{lem}\label{lem5}
Let $(\k,\sp)$ be a $\sp$-field and let $G\subset \bold{\GL_{n,\k}}$ be an algebraic group  defined over $\k$.  Assume that $G$ has toric constant centralizer.
  Let  $H$ be a $\sp$-subgroup of $\bold{\GL_{n,\k}}$ such that  $G^{\sp^d}$ is a normal subgroup of $H$, i.e. $G^{\sp^d}(B)$ is a normal subgroup of $H(B)$ for all $B \in \Alg_{\k, \sp}$. Then,   for all $B \in \Alg_{\k,\sp}$ and $g \in H(B)$ there exists 
$\lambda_g \in  B^\times$ such that $\sp^d(g)=\lambda_g g$.
\end{lem}

\begin{proof}[Proof of Lemma \ref{lem5}]

If $g$ normalizes $G^{\sp^d}(B)$, for some $d\in \mathbb{N}^{\times}$, then $\sp^d(g)g^{-1}$ centralizes $G^{\sp^d}(B)$. By assumption, we conclude that $\sp^d(g)g^{-1}$ is a scalar matrix.
\end{proof}

\begin{lem}\label{lem6}
Let $(\k,\sp)$ be a $\sp$-field. The algebraic groups $\bold{\SL_{n,\k}}$ (when $n\geq 2$), $\bold{SO_{n,\k}}$ (when $n\geq 3$) and $\bold{Sp_{n,\k}}$ (when $n$ is even) have toric constant centralizer.
\end{lem}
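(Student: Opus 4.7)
The plan is as follows. Fix $d \in \N^{\times}$ and $S \in \Alg_{\k,\sp}$, and write $S_{0} = S^{\sp^{d}}$ for the ring of $\sp^{d}$-constants. Since the paper works in characteristic zero, $\Z$ embeds in $S$, and being fixed by every ring automorphism, this copy of $\Z$ lies in $S_{0}$. Hence $G(\Z) \subset G(S_{0}) = G^{\sp^{d}}(S)$ for each of the three groups $G$ under consideration (all of which are defined by integer equations in $\GL_{n}$), and it suffices to prove that the centralizer of $G(\Z)$ in $\GL_{n}(S)$ already equals $S^{\times} I_{n}$. I will do this by exhibiting, in each case, a small family of integer matrices in $G(\Z)$ whose joint centralizer in $\GL_n(S)$ is exactly the scalar matrices -- a purely computational matter, independent of $\sp$.

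For $\SL_{n}$ with $n \geq 2$ I would use the elementary transvections $I_{n} + E_{ij}$ ($i \neq j$), all of which lie in $\SL_{n}(\Z)$. The equation $hE_{ij} = E_{ij}h$ reads $h_{ki}\delta_{jl} = \delta_{ki}h_{jl}$ entrywise and yields $h_{jl} = 0$ for $l \neq j$, $h_{ki} = 0$ for $k \neq i$, and $h_{ii} = h_{jj}$; letting the pair $(i,j)$ vary makes $h$ scalar. For $\Sp_{n}$ with $n = 2m$ even I would use three families of symplectic elementary matrices: $I_{n} + E_{i,m+i}$, $I_{n} + E_{m+i,i}$ for $1 \leq i \leq m$, and $I_{n} + (E_{ij} - E_{m+j,m+i})$ for $i \neq j$ in $\{1,\dots,m\}$. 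Their membership in $\Sp_{n}(\Z)$ is immediate from $X^{t}J + JX = 0$ and $X^{2} = 0$ for the corresponding $X \in \mathfrak{sp}_{n}$. The same entrywise computation as in the $\SL_{n}$ case shows that the first two families force $h$ to be diagonal with $h_{ii} = h_{m+i,m+i}$, and commuting the resulting diagonal $h$ with the third family then equates all the $h_{ii}$ (and all the $h_{m+i,m+i}$), so $h$ is scalar.

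Finally, for $\SO_{n}$ with $n \geq 3$ and the standard form $C^{t}C = I_{n}$ I would use the double reflections $I_{n} - 2(E_{ii} + E_{jj})$ (determinant $(-1)^{2} = 1$) and the integer $3$-cycle permutation matrices (determinant $+1$), both in $\SO_{n}(\Z)$. Commuting $h$ with $I_{n} - 2(E_{ii} + E_{jj})$ is equivalent to commuting with $E_{ii} + E_{jj}$, and entrywise analysis forces $h_{kl} = 0$ whenever exactly one of $k,l$ belongs to $\{i,j\}$; since $n \geq 3$, for any off-diagonal $(p,q)$ one may pick $j \notin \{p,q\}$ and use the pair $(p,j)$ to deduce $h_{pq} = 0$, so $h$ is diagonal. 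Commuting a diagonal $h$ with a $3$-cycle permutation matrix $\pi_{\sigma}$ gives $h_{\sigma(l),\sigma(l)} = h_{ll}$, so the diagonal is constant along $3$-cycle orbits; any two indices sit in a common $3$-cycle (since $n \geq 3$), so $h$ is scalar. The whole lemma thus amounts to a case-by-case verification: no step presents a genuine obstacle, the only mildly tedious point being the explicit check that the symplectic elementary matrices above really lie in $\Sp_{n}(\Z)$.
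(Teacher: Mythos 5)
Your proof is correct and follows essentially the same strategy as the paper's: in each case one exhibits explicit $\sp^d$-constant (here, integer) matrices lying in $G^{\sp^d}(S)$ and checks entrywise that their joint centralizer in $\GL_n(S)$ is scalar. The only differences are in the choice of witnesses (the paper uses the block matrices $\mathrm{Diag}(N,(N^{-1})^{t})$ and a reduction to the $\SL_{n/2}$ case for $\Sp_n$, and planar rotations rather than $3$-cycles for $\SO_n$), which is immaterial.
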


\begin{proof}
The algebraic groups $\bold{\SL_{n,\k}}$ (when $n\geq 2$), $\bold{SO_{n,\k}}$ (when $n\geq 3$) and $\bold{Sp_{n,\k}}$ (when $n$ is even) are absolutely almost simple algebraic group.  Let $d \in \N^\times$ and let $B \in \Alg_{\k,\sp}$.

Let us consider $\bold{\SL_{n,\k}}$ with $n\geq 2$. Let $M \in \GL_{n,\k}(B)$  that centralizes $\SL_n^{\sp^d}(B)$. For $i \neq j$, the matrices $X_{i,j}= I_n +E_{i,j}$,
where $E_{i,j}$ are matrices with zeros at every entry except $1$ at row $i$ and column $j$, belong to $\SL_{n,\k}^{\sp^d}(B)$ for all $B \in \Alg_{\k,\sp}$. Consequently, for all $i \neq j$, $B \in \Alg_{\k,\sp}$, ${MX_{i,j}=X_{i,j}M}$. This shows that $M =\lambda I_n$ for some $\lambda \in B^\times$. \par
 
Let us consider $\bold{SO_{n,\k}}$ with $n\geq 3$. Let $M \in \GL_{n,\k}(B)$  that centralizes $\SO_n^{\sp^d}(B)$. For all $1\leq i<j\leq n$, $B \in \Alg_{\k,\sp}$, 
$M N_{i,j}=N_{i,j}M$, where $N_{i,j}$ is the diagonal matrix with $1$ entry, except the diagonal entries $i$ and $j$ that are equal to $-1$. It follows that $M$ is diagonal. To conclude that $M =\lambda I_n$ for some $\lambda \in B^\times $, we consider the commutation with  $P_{i}=\mathrm{Diag}\left(\mathrm{I}_{i},\begin{pmatrix}0&1\\ -1&0 \end{pmatrix},\mathrm{I}_{n-i-2}\right)$, $i\leq n-2$. \par 

Let us consider $\bold{Sp_{n,\k}}$ with $n$ even. 
Let  $M \in \GL_{n,\k}(B)$   that centralizes $\Sp_n^{\sp^d}(B)$.
 For all $N\in \SL_{n/2,\k}^{\sp^d}(B)$,  $\mathrm{Diag}(N,(N^{-1})^{t})\in 
\mathrm{Sp}_{n,\k}^{\sp^d}(B)$. Then, for all $N\in \SL_{n/2,\k}^{\sp^d}(B)$, we have ${M \mathrm{Diag}(N,(N^{-1})^{t})=\mathrm{Diag}(N,(N^{-1})^{t})M}$. Let $M=\begin{pmatrix}M_{1,1} &M_{1,2} \\M_{2,1} &M_{2,2} \end{pmatrix}$, $M_{i,j}$ are $n/2$  times $n/2$ matrices.  From the commutation relation we obtain ${M_{1,1} N=N M_{1,1}}$. Using the fact that $\bold{\SL_{n/2,\k}}$ has toric constant centralizer, we conclude that $M_{1,1}= \lambda I_{n/2}$ for some $\lambda \in B^\times$. Similarly, we find that  $M_{2,2}= \mu I_{n/2} $ for some $\mu \in B^\times$. Then,  $M N =NM$ with $N=\begin{pmatrix}\mathrm{I}_{n/2}&\mathrm{I}_{n/2}\\ 0&\mathrm{I}_{n/2} \end{pmatrix}\in \mathrm{Sp}_{n,\k}^{\sp^d}(B)$. We obtain $M_{2,1}=0$. Similarly with $N=\begin{pmatrix}\mathrm{I}_{n/2}&0\\ \mathrm{I}_{n/2}&\mathrm{I}_{n/2} \end{pmatrix}\in \mathrm{Sp}_{n,\k}^{\sp^d}(B)$, we obtain $M_{1,2}=0$. Finally, with the commutation of $M$ with $N=\begin{pmatrix}0&\mathrm{I}_{n/2}\\ -\mathrm{I}_{n/2}&0 \end{pmatrix}\in \bold{\mathrm{Sp}_{n,\k}^{\sp^d}}$, we find $M= \lambda I_n$ for some $\lambda \in B^\times$.
\end{proof}
\section{Convergent power series solution of $q$-difference equation}\label{sec:convergentsolution}

Let $\K =\C(\{z\})$ be the field of fraction of the ring of convergent power series $\C\{z\}$.

Let $A\in \GL_n(\K)$. In \cite{SaulFiltration}, the author attaches to a $q$-difference system $\sq(Y)=AY$, a Newton
polygon $N(A)$. The slopes of the non-vertical half-lines defining the border of $N(A)$ are called the slopes of the Newton polygon and ranked in decreasing order as follows $S(A):=\{ \mu_1> \mu_2 \dots >\mu_r \} \subset \Q$. The Newton polygon and the slopes of the $q$-difference system are invariant under formal gauge transforms, i.e., $S(A)=S(\sq(P)AP^{-1})$ and ${N(A)=N(\sq(P)AP^{-1})}$ for any $P\in \GL_n(\K)$. The slopes induces 
a filtration of  the $q$-difference module associated to the $q$-difference system $\sq(Y)=AY$. One has the following proposition:
\begin{propo}[\cite{RSZ}, \S 3.3]\label{propo:convdecomp}
Let $A\in \GL_n(\K)$ and let $S(A):=\{ \mu_1> \mu_2 \dots >\mu_r \}$ be its set of slopes. Assume that $S(A)\subset \Z$.  Then, there exist ${P \in \GL_n(\K)}$,  $A_1,\dots A_r$ some invertible constant matrices and $U_{i,j}$ some matrices with coefficients in $\K$ such that 
$$
\sq(P)AP^{-1} =\begin{pmatrix}
z^{-{\mu_1} }A_1 & & \dots \dots  &\dots & \dots & \dots & U_{1,r}\\
0 & \ddots & \dots & \dots & \dots & \dots  & \vdots \\
\vdots & \ddots & z^{-\mu_i}A_i & \dots & U_{i,j} &\dots  & \vdots \\
\vdots & \dots & 0& \ddots &  \vdots & \dots & \vdots \\
\vdots & \dots & \dots& \ddots &   z^{-\mu_j}A_j& \dots & \vdots \\
\vdots & \dots & \dots& \dots & 0 &\ddots & \vdots \\

 0& \dots &\dots& \dots &\dots  & 0 & z^{- \mu_r} A_r 
\end{pmatrix}.
$$
\end{propo}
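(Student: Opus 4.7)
The plan is to invoke Sauloy's slope filtration theorem for $q$-difference modules in the convergent setting. Let $M_A$ denote the $q$-difference module over $\K$ associated with $\sq(Y) = AY$, and let $S(A) = \{\mu_1 > \cdots > \mu_r\}$ be its slopes, all assumed integers.

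First, by the slope filtration theorem of \cite{SaulFiltration}, there exists a canonical decreasing filtration $M_A = M^{(1)} \supset M^{(2)} \supset \cdots \supset M^{(r)} \supset M^{(r+1)} = 0$ by sub-$q$-difference modules over $\K$ such that each graded piece $M^{(i)}/M^{(i+1)}$ is pure isoclinic of slope $\mu_i$. I would then choose a basis of $M_A$ which refines this filtration (i.e., whose first vectors form a basis of $M^{(r)}$, next vectors extend it to a basis of $M^{(r-1)}$, and so on). In such a basis, the matrix of $\sq$ is upper block triangular, with the $i$-th diagonal block a matrix representing the pure isoclinic module $M^{(i)}/M^{(i+1)}$.

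Second, I would show that, since $\mu_i \in \Z$, each pure isoclinic module of slope $\mu_i$ admits, after a gauge transformation over $\K$, a representative matrix of the form $z^{-\mu_i} A_i$ with $A_i$ a constant invertible complex matrix. The argument goes by twisting: tensoring with the rank-one module whose matrix is $z^{\mu_i}$ yields a pure isoclinic module of slope $0$, i.e., a regular singular $q$-difference module over $\K$. Over the convergent field $\K = \C(\{z\})$, a regular $q$-difference module admits a basis in which the matrix has constant entries (one may invoke Praagman's theorem, cited in the paper as \cite{Pra}, to produce a meromorphic fundamental solution, then perform a gauge change trivializing the monodromy datum to a constant matrix). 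Untwisting produces the desired diagonal block $z^{-\mu_i} A_i$.

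The main obstacle is the second step, which crucially exploits the convergent (as opposed to purely formal) nature of $\K$: the existence of a constant-matrix representative for a regular singular pure module requires the Birkhoff--Praagman-type structure theorem, not merely formal normal form. Once these gauge transformations on each graded piece are in hand, combining them with the filtration-adapted basis from the first step yields a single global gauge $P \in \GL_n(\K)$ producing the stated block upper triangular form, the $U_{i,j}$ being the off-diagonal entries which the slope filtration does not prescribe.
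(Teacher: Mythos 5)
Your sketch is correct, but note that the paper does not prove this proposition at all: it is quoted verbatim from \cite{RSZ} ($\S 3.3.2$), so there is no internal proof to compare against. Your two-step reconstruction --- Sauloy's canonical slope filtration over $\K=\C(\{z\})$ to get the block-triangular shape, then twisting each pure integer-slope graded piece by $z^{\mu_i}$ and using the classical constant-matrix normal form for regular singular $q$-difference systems over $\C(\{z\})$ --- is exactly the argument of the cited reference, and the point you flag (that the analytic, not merely formal, nature of $\K$ is what makes both the filtration and the constant normalization work) is the right one.
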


\begin{lem}\label{lem:convergentformal solution}
Let $A \in \GL_n(\K)$.We let $l$ to be the least common multiple of the denominators of the slopes $S(A):=\{ \mu_1> \mu_2 \dots >\mu_r \}$ of $A$.
Then, there exist an integer $r$ and a complex number $c \in \C^*$ such that 
the system ${\sq(Y)=cz^{r/\ell} A Y}$ has a non zero vector solution $Y_0\in \C(\{ z^{1/\ell}\})^{n}$. If ${S(A) \subset \Z}$, we may further assume that $Y_0 \in \K^n \cap \cM er(\C^*)$. Moreover if $A$  is fuchsian, i.e., $S(A)=\{0\}$, one can choose 
$r$ to be $0$. 
\end{lem}
\begin{proof}
Assume first that $S(A) \subset \Z$. We know, by Proposition \ref{propo:convdecomp}, one can find $P \in \GL_n(\K)$ and $A_1,\dots A_r$ some invertible constant matrices such that 
\begin{equation}\label{eq:blockdecomp}
\sq(P)AP^{-1} =\begin{pmatrix}
z^{-{\mu_1} }A_1 & & \dots \dots  &\dots & \dots & \dots & U_{1,r}\\
0 & \ddots & \dots & \dots & \dots & \dots  & \vdots \\
\vdots & \ddots & z^{-\mu_i}A_i & \dots & U_{i,j} &\dots  & \vdots \\
\vdots & \dots & 0& \ddots &  \vdots & \dots & \vdots \\
\vdots & \dots & \dots& \ddots &   z^{-\mu_j}A_j& \dots & \vdots \\
\vdots & \dots & \dots& \dots & 0 &\ddots & \vdots \\

 0& \dots &\dots& \dots &\dots  & 0 & z^{- \mu_r} A_r 
\end{pmatrix}.
\end{equation}
One can also assume, up to multiply $P$ by a constant matrix, that $A_1$ is upper triangular. We let $d \in \C^*$ be the coefficient on the first row and column of $A_1$. An easy computation shows that the vector $Z_0:=\begin{pmatrix}
1 \\
0 \\
\vdots\\
0

\end{pmatrix}$ is a solution of the system $\sq(Z)= \frac{z^{\mu_1}}{d}\sq(P)AP^{-1}Z$. Then, the vector $Y_0:=P^{-1}Z_0 \in \K^n$ is a non zero solution of the system $\sq(Y)=\frac{z^{\mu_1}}{d}A Y$. Moreover, one can show, using the fact that $\sq(Y_0)=\frac{z^{\mu_1}}{d}AY_0$ that the vector $Y_0$ defines a meromorphic function on $\C^*$. This proves the result with $r=\mu_{1}$ and $c=d^{-1}$. If  $S(A)=\{0\}$, then $\mu_{1}=0$ and the result follows in this case too.\par

Let us treat the general case. We let $l$ to be the least common multiple of the denominators of the slopes $S(A):=\{ \mu_1> \mu_2 \dots >\mu_r \}$ of $A$. 
By \cite[Theorem~2.2.1]{RSZ}, the variable change  $z\mapsto z^{1/\ell}$ transforms $\sigma_{q}Y=AY$ into a $q^{1/\ell}$-difference equation with integral slopes $\{ \ell \mu_1> \ell\mu_2 \dots >\ell \mu_r \}$. Therefore, appying the integer slopes case, we find the existence of $r$ and a complex number $c \in \C^*$ such that 
the system $\sq(Y)=cz^{r/\ell} A Y$ has a non zero vector solution $Y_0 \in \C(\{ z^{1/\ell}\})^{n}$.
\end{proof}

\end{appendices}
\bibliographystyle{alpha}

\bibliography{biblio}
\end{document}